
\documentclass[reqno,11pt]{amsart}
\usepackage{xcolor}
\definecolor{refkey}{gray}{.8}   
\definecolor{labelkey}{rgb}{0.4,0.4,0.8} 
 \usepackage{amsmath, amsthm, a4, latexsym, amssymb}
\usepackage[unicode]{hyperref}
\hypersetup{
    colorlinks=true, 
    linktoc=all,     
    linkcolor=blue,  
}
\usepackage{srcltx}




\usepackage{geometry}
\usepackage{mathrsfs}  
\usepackage{enumitem}

\usepackage{amstext,bbm,nicefrac,physics, tikz}
\usepackage{subcaption}

\usetikzlibrary{calc}
\usetikzlibrary{decorations.pathmorphing}

\def\@rmrk#1#2{\refstepcounter
  {#1}\@ifnextchar[{\@yrmrk{#1}{#2}}{\@xrmrk{#1}{#2}}}

%
 
\makeatletter\@addtoreset{equation}{section}\makeatother

 \sloppy
 \parskip 0.8ex plus0.3ex minus0.2ex
 \parindent1em

 \newfont{\bfit}{cmbxti10 scaled 1200}

\setlength{\topmargin}{0in}
\marginparwidth 48pt
\marginparsep 10pt
\oddsidemargin-0.5cm
\evensidemargin-.5cm
\headheight 12pt
\headsep 25pt
\footskip 30pt
\textheight  625pt 
\textwidth 170mm
\columnsep 10pt
\columnseprule 0pt
\setlength{\unitlength}{1mm}
\setlength{\parindent}{1em}
\parskip 0.8ex plus0.3ex minus0.2ex

\def\@rmrk#1#2{\refstepcounter
    {#1}\@ifnextchar[{\@yrmrk{#1}{#2}}{\@xrmrk{#1}{#2}}}


 \makeatletter\@addtoreset{equation}{section}\makeatother

 \sloppy
%

\renewcommand{\d}{{\rm d}}

\newcommand{\cE}{{\mathcal E}}

\newcommand{\bE}{\mathbb{E}}
\newcommand{\e}{{\rm e} }

\newcommand{\eps}{\varepsilon}
\newcommand{\supp}{{\rm supp}}

\newcommand{\diam}{{\rm diam}}

\newcommand{\R}{\mathbb{R}}
\newcommand{\N}{\mathbb{N}}
\newcommand{\Z}{\mathbb{Z}}

\newcommand{\E}{\mathbb{E}}
\renewcommand{\P}{\mathbb{P}}
 \def\1{{\mathchoice {1\mskip-4mu\mathrm l} 
{1\mskip-4mu\mathrm l}
{1\mskip-4.5mu\mathrm l} {1\mskip-5mu\mathrm l}}}

\newcommand{\uu}{{u_{\mathrm{hom}}}} 

\setlength {\marginparwidth }{2cm}
\usepackage{todonotes}

\newcommand{\bigset}[2]{ \big\{\,  #1 \,\big| \, #2 \,\big\} }


\DeclareMathOperator*{\esssup}{\mathrm{ess\!\;sup}}

\newcommand{\heap}[2]{\genfrac{}{}{0pt}{}{#1}{#2}}

\newcommand{\ssup}[1] {{\scriptscriptstyle{({#1}})}}


\renewcommand{\subsection}{\secdef \subsct\sbsect}
\newcommand{\subsct}[2][default]{%
  \refstepcounter{subsection}%
  \addcontentsline{toc}{subsection}{\protect\tocsubsection{}{\thesubsection}{#1}}%
  \vspace{0.15cm}
   {\flushleft\bf \arabic{section}.\arabic{subsection}~\bf #1  }
    \nopagebreak\nopagebreak}
\newcommand{\sbsect}[1]{\vspace{0.1cm}\noindent{\bf #1}\vspace{0.1cm}}

             {\nopagebreak {\hfill\rule{2mm}{2mm}}\\ }

\newtheorem{theorem}{Theorem}[section]
\newtheorem{lemma}[theorem]{Lemma}
\newtheorem{cor}[theorem]{Corollary}
\newtheorem{prop}[theorem]{Proposition}

\newtheorem{definition}[theorem]{Definition}

\newtheoremstyle{thm}{1.5ex}{1.5ex}{\itshape\rmfamily}{}
                      {\bfseries\rmfamily}{}{2ex}{}
\newtheoremstyle{rem}{1.3ex}{1.3ex}{\rmfamily}{}
                      {\itshape\rmfamily}{}{1.5ex}{}
\theoremstyle{rem}
\newtheorem{remark}{{\slshape\sffamily Remark}}[]

\refstepcounter{subsubsection}

\def\thebibliography#1{\section*{References}
  \list%
  {\arabic{enumi}.}
    {\settowidth\labelwidth{[#1]}\leftmargin\labelwidth
    \advance\leftmargin\labelsep
    \parsep0pt\itemsep0pt
    \usecounter{enumi}}
    \def\newblock{\hskip .11em plus .33em minus .07em}
    \sloppy                   
    \sfcode`\.=1000\relax}






\setcounter{tocdepth}{2}

\let\oldtocsection=\tocsection

\let\oldtocsubsection=\tocsubsection

\let\oldtocsubsubsection=\tocsubsubsection

\renewcommand{\tocsection}[2]{\hspace{0em}\oldtocsection{#1}{#2}}
\renewcommand{\tocsubsection}[2]{\hspace{1em}\oldtocsubsection{#1}{#2}}
\renewcommand{\tocsubsubsection}[2]{\hspace{2em}\oldtocsubsubsection{#1}{#2}}

\begin{document}

\title[HJB in Random Geometries: Homogenization on Continuum Percolation Clusters]
     {\large Hamilton--Jacobi--Bellman Equations in Random Geometries: Homogenization on Continuum Percolation Clusters}

\maketitle



\begin{center}
{\sc Rodrigo Bazaes}\footnote{https://rodrigobazaes.com, {\tt rodrigo@rodrigobazaes.com}},
{\sc Alexander Mielke}\footnote{WIAS Berlin and Humboldt Univerit\"at Berlin, Mohrenstra\ss{}e 39,  10117 Berlin,
    {\tt alexander.mielke@wias-berlin.de}}  
and  
{\sc Chiranjib Mukherjee}\footnote{Universit\"at M\"unster, Einsteinstrasse 62,
      48149 M\"unster, Germany, {\tt chiranjib.mukherjee@uni-muenster.de}}
\\[0.5em]
\textit{Universit\"at M\"unster, WIAS Berlin and HU Berlin, Universit\"at
  M\"unster}
\\[0.5em]
\today
\end{center}

\renewcommand{\thefootnote}{}
\footnote{\textit{AMS Subject
Classification:} {35F21, 35B27, 49L25, 60F10, 78A48, 82B43}}
\footnote{\textit{Keywords:} Hamilton-Jacobi-Bellman equations, stochastic homogenization, continuum percolation, convex variational analysis, entropy, min-max theorems}

\renewcommand{\thefootnote}{\arabic{footnote}}

\begin{quote}{\small {Abstract: We develop a quenched homogenization theory for optimal control problems related to Hamilton--Jacobi--Bellman equations
on random geometries arising from continuum percolation. The underlying state
space is the infinite connected component of a continuum percolation model
conditioned to contain the origin. As a consequence, the relevant law of the
environment is no longer translation invariant, and the geometry of the state
space itself becomes part of the homogenization problem. The associated
controlled diffusion is allowed to degenerate according to the distance to the
random boundary of the cluster. The admissible degeneracy regime is determined jointly by a balance between the sharp negative-moment threshold associated with the distance-to-boundary function of the continuum cluster and the coercivity of the underlying Hamiltonian. The framework applies to a broad
class of continuum percolation models, including models with long-range
correlations.

We prove that the corresponding rescaled value functions converge, locally in $L^p$ for every $p\ge 1$ on the rescaled random domains, almost surely
to a deterministic limit governed by an effective Hamiltonian. The
effective Hamiltonian admits dual variational characterizations involving
a class of curl-free gradients satisfying an induced
mean-zero condition determined by the geometry of the infinite cluster. The
resulting effective theory retains information about the continuum percolation
geometry, the degeneracy of the diffusion, and the nonstationarity induced by
conditioning on the infinite component.

The proof introduces a variational framework for homogenization under
nonstationary conditioned laws. Its main ingredients are random shifts
adapted to the geometry of the continuum infinite cluster, a two-step min--max construction for admissible gradients, and a novel relative entropy structure intrinsic to the stochastic
control representation. The latter provides the coercivity needed for the variational analysis and reveals a new connection between relative entropy and the construction of effective theories for nonlinear stochastic control problems. Although developed here for continuum percolation, this coercive-entropic variational framework applies equally well in the general setting of stationary ergodic random media on $\R^d$ and is therefore of independent interest. 
Through the stochastic representation established in a companion
work, the results yield quenched homogenization of the corresponding
Hamilton--Jacobi--Bellman equations.
}
}

\end{quote}


\tableofcontents 
\vspace{-1em}

\section{Introduction}\label{sec introduction}

\subsection{Background}\label{subsec background}

In this work we study quenched stochastic homogenization for
Hamilton--Jacobi--Bellman equations posed on genuine continuum percolation
clusters. In contrast to classical stochastic homogenization, the randomness in
the present setting is carried not only by the coefficients, but also by the
geometry of the state space itself. The underlying random medium is given by
the infinite connected component of a continuum percolation model conditioned
to contain the origin, and the associated diffusion degenerates according to
the distance to the random boundary of the cluster. Concretely, the relevant environment law is not the original stationary
ergodic law $\P$ of the environment, but rather the conditioned law
\begin{equation}\label{P0}
\P_0(\cdot)=\P(\cdot|0\in\mathcal C_\infty),
\end{equation}
which is no longer translation invariant.

Continuum percolation models form a fundamental class of random media in
statistical mechanics and the study of disordered systems. Their large-scale
geometry exhibits a rich interplay between probabilistic connectivity,
irregular spatial structure, and analytic degeneracy, making them a natural
but highly challenging setting for stochastic homogenization. While stochastic
homogenization on discrete percolation structures
\cite{SS04,BB07,MP07,PRS15,K12,BMO16,D21} (see also the survey of Biskup \cite{B11}) and in stationary ergodic random media on the full space $\R^d$ 
(\cite{LPV87,So99,RT00,LS05,LS10,KRV06,KV08}; see also the survey of
Kosygina~\cite{K08}) has been studied extensively, substantially less is known
in genuine continuum percolation settings; see, for instance,
\cite{DG23}. To the best of our knowledge, the
present work is the first on homogenization of Hamilton--Jacobi--Bellman
equations on genuine continuum percolation clusters.

Let us emphasize that, in contrast to classical homogenization in random
media, here the random geometry of the state space plays a determining role in the
large-scale limit. Concretely, probabilistic, analytic, and geometric effects
manifest themselves through the nonstationarity of the conditioned law
$\P_0$, the diffusion degenerating according to the distance to the random
boundary, and the integrability properties of the distance-to-the-boundary
function induced by the geometry of the infinite cluster, respectively. In
particular, the geometric quantity governing the distance to the boundary of
the cluster, together with the necessary coercivity assumptions imposed on the
Hamiltonian, determines the admissible integrability regime of the
degeneracy required for homogenization.

A central theme of this work is that these three aspects are not independent.
Rather, they are reflected jointly in the constitutent variational theory and,
ultimately, in the homogenized limit. Consequently, the percolation geometry
enters directly into the effective Hamiltonian and the associated variational
formulas. In particular, the homogenized limit retains quantitative
information about the large-scale geometry of the infinite cluster, the
degeneracy of the diffusion, and the nonstationarity induced by conditioning
on the infinite component.

We now turn to a more concrete description of the setting and the main
homogenization result.

\subsection{Outline of the result}\label{subsec outline}

Let $\Omega$ denote the space of locally finite point configurations in
$\R^d$, $d\ge2$, equipped with a probability measure $\P$. The translation
group $\{\tau_x\}_{x\in\R^d}$ acts on $\Omega$ by
$\tau_x\omega=\omega-x:=\{y-x:y\in\omega\}$.
Given $\omega\in\Omega$, define
\[
\mathcal C(\omega):=\bigcup_{y\in\omega} B_{1/2}(y).
\]
We assume that $\P$ is stationary and ergodic under
$\{\tau_x\}_{x\in\R^d}$ and that, $\P$-almost surely,
$\mathcal C(\omega)$ contains a unique open infinite unbounded connected
component $\mathcal C_\infty(\omega)$. The event
$\Omega_0:=\{0\in\mathcal C_\infty\}$
then has strictly positive probability, and we define the conditioned measure
$\P_0$ as in \eqref{P0}; cf. Section~\ref{sec-assume-models}.

For $\omega\in\Omega_0$ and $x\in\mathcal C_\infty(\omega)$, consider the
controlled diffusion
\[
X_t
=
x+\int_0^t \sigma(X_s,\omega)\,dB_s
+\int_0^t (\div a)(X_s,\omega)\,ds
+\int_0^t a(X_s,\omega)c(s)\,ds.
\]
The diffusion matrix $a(\omega)$ is assumed to be symmetric, positive
semidefinite, and degenerately elliptic. More precisely, there exists a
measurable map $\xi:\Omega_0\to(0,\infty)$ such that
\begin{equation}\label{a deg}
\begin{aligned}
\xi(\omega)|v|^2
\le \langle a(\omega)v,v\rangle
\lesssim |v|^2,
\qquad \forall v\in\R^d,
\qquad\mbox{and}\qquad
\E_0[\xi^{-\chi}]<\infty,
\end{aligned}
\end{equation}
for a suitable exponent $\chi\in(\frac12,1)$. The quantity $\xi$ is tied to
the geometry of the cluster through the distance-to-the-boundary function and
encodes the admissible level of degeneracy compatible with the homogenization
theory (see below). 

For $x\in\R^d$ we define
\[
a(x,\omega):=a(\tau_x\omega),
\]
so that $\{a(x,\cdot)\}_{x\in\R^d}$ is stationary with respect to $\P$ and
$x\mapsto a(x,\omega)$ is sufficiently regular for any fixed $\omega$; we
refer to \ref{f1}-\ref{f1'} for details. It was proved
in~\cite[Theorem~2.1]{BMM26} that, for any fixed
$\omega\in\Omega_0$, the diffusion almost surely does not hit the boundary
$\partial\mathcal C_\infty(\omega)$ and that the above stochastic differential
equation admits a unique global strong solution.

For any fixed $\omega\in\Omega_0$, control $c$, and starting point
$x\in \mathcal C_\infty(\omega)$, let $P^{c,\omega}_x$ denote the law of the
quenched diffusion. Associated with this diffusion is the value function
\[
u_\eps(t,x,\omega)
=
\sup_c
\eps J^\omega_{f_\eps}
\left(
\frac t\eps,
\frac x\eps,
c
\right),
\]
where
\[
J_f^\omega(t,x,c)
=
E^{P_x^{c,\omega}}
\bigg[
f(X_t)-\int_0^t L(X_s,c(s),\omega)\,\d s
\bigg]\quad\mbox{and} \quad f_\eps(x)= \frac 1\eps f(\eps x),
\]
with $f$ being a uniformly continuous initial condition. Also 
$L(q,\omega)
=\sup_{p\in\R^d} \big[\langle p,q\rangle_a-H(p,\omega)\big]$ and $p\mapsto H(p,\omega)$ is a convex Hamiltonian satisfying suitable growth assumptions in $p$ with respect to the degenerate matrix $a(\omega)$ and continuity assumptions on $x\mapsto H(x,p,\omega):=H(p,\tau_x \omega)$; see \ref{f2}-\ref{f4} for
details. The value function solves \cite[Theorem~2.3]{BMM26} a
Hamilton--Jacobi--Bellman equation (HJB) posed on the rescaled random cluster
$\eps\mathcal C_\infty(\omega)$, without imposing boundary conditions on
$\partial(\eps\mathcal C_\infty(\omega))$; that is, for any fixed $\eps>0$
and $\omega\in\Omega_0$, $u_\eps(t,x,\omega)$ solves
\begin{equation}\label{eq-HJB'}
\begin{cases}
\partial_t u_\eps
=
\frac{\eps}{2}
\mathrm{div}\big(a\big(\frac x\eps,\omega\big)\nabla u_\eps\big)
+
H\big(\frac x\eps,\nabla u_\eps,\omega\big),
&\text{in } (0,T)\times\eps\mathcal C_\infty(\omega),
\\
u_\eps(0,x,\omega)=f(x),
&\text{on } \eps\mathcal C_\infty(\omega),
\end{cases}
\end{equation}
in the class of viscosity solutions of at most linear growth. The absence of
boundary conditions is a consequence of the above confinement property proved
in~\cite{BMM26}: the diffusion $X_t$ started at $x\in \mathcal C_\infty$ almost surely does not reach the boundary of
the cluster.

With this background, the main goal of the present article is to develop a
homogenization theory for $u_\eps$ almost surely under the conditioned law
$\P_0$ on the random domains
\[
D(\eps,R,\omega)
=
\eps\mathcal C_\infty(\omega)\cap B_R(0).
\]
which have asymptotically positive density for every fixed $R>0$: by the spatial ergodic theorem under $\P$, 
$|D(\eps,R,\omega)|
\to
|B_R(0)|\,\P[0\in\mathcal C_\infty]
>0$
as $\eps\to 0$ almost surely with respect to $\P_0$. Our main result is the following quenched homogenization theorem, establishing local $L^p$ convergence in the random domains $D(\eps,R,\omega)$, uniformly on compact time intervals: 
\begin{theorem}\label{theorem 1'}
We assume \ref{assump:est-erg}-\ref{assump:dist}, \ref{f1}-\ref{f1'} and
\ref{f2}-\ref{f4} stated below. Then for every $T,R>0$ and
$p\ge1$, and for $\P_0$-almost every $\omega\in\Omega_0$,
\[
\lim_{\eps\to0} \sup_{0\leq t \leq T} 
\frac1{|D(\eps,R,\omega)|}
\int_{D(\eps,R,\omega)}
|u_\eps(t,x,\omega)-u_{\rm hom}(t,x)|^p\,dx
=
0.
\]
Here $u_{\rm hom}$ is the unique viscosity solution of the homogenized
equation
\[
\partial_t u_{\rm hom}
=
\overline H(\nabla u_{\rm hom})
\qquad\text{in }(0,\infty)\times\R^d, \quad u_{\rm hom}(0,x)=f(x) \qquad\forall\, x\in\R^d. 
\]
The effective Hamiltonian $\overline H$ and the homogenized limit
$u_{\mathrm{hom}}$ admit variational representations
\begin{equation}\label{var Hbar'}
\overline H(\theta)
=
\inf_{G\in \mathcal G_\delta}
\bigg(
\esssup_{\P_0}
\bigg[
\frac12
\div\big(a(G+\theta)\big)
+
H(G+\theta)
\bigg]
\bigg),
\qquad
\theta\in\R^d,
\end{equation}
and
\[
u_{\mathrm{hom}}(t,x)
=
\sup_{y\in\R^d}
\Big[
f(y)-t\mathcal I\Big(\frac{y-x}{t}\Big)
\Big],\quad\mbox{where}\quad \mathcal I(y)
=
\sup_{\theta\in\R^d}
\big[
\langle\theta,y\rangle-\overline H(\theta)
\big].
\]
\end{theorem}
See Theorem \ref{thm}. In other words, despite 
the nonstationarity of the conditioned law $\P_0$, and the degeneracy of the
diffusion, the large-scale behavior of the control problem is governed by a
deterministic effective Hamiltonian. The resulting effective theory retains quantitative information about the geometry of the cluster, the degeneracy of the diffusion, and the
nonstationarity induced by the conditioned law $\P_0$.  
Indeed, the infimum in the variational representation \eqref{var Hbar'} is formulated
in terms of a class of admissible gradients $G\in\mathcal G_\delta$ (see 
Section~\ref{classG}) that are (i) curl-free on the
random cluster $\mathcal C_\infty$, (ii) belong to $L^{1+\delta}(\P_0)$,
where the exponent $\delta>0$ is determined jointly by the ellipticity
condition~\ref{f1'} on the degenerate matrix $a$ and by the availability of
suitable negative moments of the distance-to-the-boundary function on the
continuum percolation cluster (see below), and (iii) satisfy an {\it induced
mean-zero condition} compatible with the geometry of the infinite cluster and
the nonstationarity of $\P_0$. Unlike the classical stationary ergodic
setting, the latter condition is not imposed with respect to a translation-invariant
environment law, but is instead adapted to a large-scale geometric quantity encoding
arbitrarily long excursions of the infinite cluster in different directions.
Consequently, the geometric, probabilistic and analytic aspects are all 
encoded directly into the effective variational theory and hence into the homogenized limit itself.

It is worth emphasizing that the confinement of the diffusion away from the boundary (and consequently the absence of boundary conditions in the HJB equation \eqref{eq-HJB'}) does not remove the influence of the boundary geometry from the homogenization problem. Indeed, the regularity and degeneracy assumptions imposed on the diffusion coefficient must remain compatible with the available negative moments of the distance-to-boundary function. The latter moment condition is sharp and constrained by the geometry of the infinite cluster. At the same time, the integrability regime of the degeneracy is linked to the coercivity assumptions imposed on the Hamiltonian. Consequently, the admissible degeneracy regime emerges from a balance between the geometry of the continuum percolation cluster and the nonlinear structure of the Hamiltonian; see Remark~\ref{remark P6} for a more detailed discussion.

The approach developed here is based on a variational framework formulated
for continuum percolation under the conditioned law $\P_0$. It is
inspired by the seminal work of
Kosygina--Rezakhanlou--Varadhan~\cite{KRV06} on homogenization of
Hamilton--Jacobi equations in stationary ergodic random media on $\R^d$.
The conceptual challenges posed by continuum percolation necessitate the
development of new geometric, probabilistic, and variational tools. At the
same time, the coercive-entropic variational mechanism underlying the two-step
min--max construction is intrinsic to the stochastic control representation
and applies equally well in the classical stationary ergodic setting on
$\R^d$. The resulting framework incorporates geometry, degeneracy, and nonstationarity
simultaneously into the effective theory. We discuss the main ingredients of the
proof in Section~\ref{sec proof ideas}.

\subsection{Continuum percolation and assumptions}
\label{sec-assume-models}

In this section we recall the continuum percolation framework
from~\cite[Sec.~1.3]{BMM26}. Since the homogenization arguments rely
essentially on the point process structure, Palm distributions, and the
conditioned law $\P_0$, we state the framework in a self-contained form. 

\subsubsection{Basic definitions.}
\label{subsec pointPalm}

Let $\Omega$ be the space of all locally finite point subsets of $\R^d$
endowed with the smallest $\sigma$-algebra $\mathcal{G}$ that makes the maps
$\omega\mapsto \#(\omega\cap A)$
measurable for all Borel sets
$A\in \mathcal{B}(\R^d)$. A \textit{point process} is a probability measure
$\P$ on $(\Omega,\mathcal{G})$. On $\Omega$, there is a natural action of
$(\tau_x)_{x\in \R^d}$ given by $\tau_x \omega
:=
\omega-x
=
\big\{ y-x  : y\in \omega \big\}$.
We say that a point process is {\it stationary} if
$\P\circ \tau_x = \P$ for all $x\in \R^d$.
A stationary point process is ergodic with respect to
$(\tau_x)_{x\in \R^d}$ if, for every $A\in\mathcal G$ and every
$x\in \R^d$, the invariance $\tau_x A=A$ implies
$\P(A)\in\{0,1\}$. The \textit{intensity measure} of $\P$ is the measure on
$(\R^d,\mathcal{B}(\R^d))$ defined by
\begin{equation}\label{eq:Theta-def}
 \Theta(A)
 :=
 \int \#(\omega \cap A)\,\P(\d\omega)
 =
 \E[\#(\omega\cap A)].
\end{equation}
Here and throughout the sequel, $\E$ denotes expectation with respect to
$\P$. Observe that whenever $\P$ is stationary and $\Theta$ is locally finite,
there exists $\zeta\in (0,\infty)$ such that
$\Theta=\zeta \lambda$,
where $\lambda$ denotes Lebesgue measure on $\R^d$. The constant $\zeta$ is
called the \textit{intensity} of the point process.
\subsubsection{Palm measures.}
On an intuitive level, Palm measures formalize the idea of the distribution of
a point process conditioned on containing a fixed point $x\in \R^d$. Define
the measure $\mathfrak{C}$ on $\R^d\times \Omega$ by
\begin{equation}\label{eq:camp-meas-def}
  \mathfrak{C}(A)
  :=
  \E\bigg[
  \sum_{x\in \omega}\1_A(x,\tau_x\omega)
  \bigg],
  \qquad
  A\in \mathcal{B}(\R^d)\otimes \mathcal{G}.
\end{equation}

The measure $\mathfrak{C}$ admits a decomposition whenever $\P$ is stationary.
Indeed, by \cite[Theorem 3.3.1]{SW08}, if $\P$ is a stationary point process
with intensity $\zeta\in (0,\infty)$, then there exists a unique probability
measure $\P^{\ssup 0}$ on $(\Omega,\mathcal{G})$ such that
\begin{equation}\label{eq:palm-decomp}
\mathfrak{C}
=
\zeta \lambda\otimes \P^{\ssup 0}.
\end{equation}
We call $\P^{\ssup 0}$ the \textit{Palm measure} corresponding to $\P$. It may
be interpreted as the distribution of the point process conditioned on
containing the origin; see \cite[Proposition 9.5]{LP18}. In particular,
$\P^{\ssup 0}(0\notin \omega)=0$;
see \cite[Eq.~(9.7)]{LP18}. More generally, for every $x\in \R^d$, define
$\P^{\ssup x}
:=\P^{\ssup 0}\circ \tau_x$.

The decomposition \eqref{eq:palm-decomp} allows us to disintegrate $\P$ in
terms of $(\P^{\ssup x})_{x\in \R^d}$. Indeed, by
\cite[Theorem 3.3.3]{SW08}, if $\P$ is a stationary point process with
intensity $\zeta\in (0,\infty)$, then for every
$f\in L^1(\R^d\times \Omega)$, the map
$\omega\mapsto \sum_{x\in \omega}f(x,\omega)$
is measurable, and
\begin{equation}\label{eq:refined-camp-decomp}
  \E\bigg[
  \sum_{x\in \omega}f(x,\omega)
  \bigg]
  =
  \zeta
  \int_{\R^d}
  \E^{\ssup 0}[f(x,\tau_{-x}\omega)]
  \,\d x
  =
  \zeta
  \int_{\R^d}
  \E^{\ssup x}[f(x,\omega)]
  \,\d x.
\end{equation}

Similarly, one can define the $n$-fold Palm distributions
$\P^{\ssup{x_1,\cdots,x_n}}$ for
$x_1,\cdots,x_n\in \R^d$. In this case,
\begin{equation}\label{eq:multidim-campbell-eq}
  \E\bigg[
  \sum_{\heap{x_1,\cdots,x_n\in \omega}{\neq}}
  f(x_1,\cdots,x_n,\omega)
  \bigg]
  =
  \zeta^n
  \int_{(\R^d)^n}
  \E^{\ssup{x_1,\cdots,x_n}}
  [f(x_1,\cdots,x_n,\omega)]
  \,\d x_1\cdots \d x_n
\end{equation}
for all $f\in L^1((\R^d)^n\times \Omega)$, where $\neq$ indicates
that the sum is taken over pairwise distinct elements.

\subsubsection{Continuum percolation.}

Let $B_r(x)
=
\{y\in \R^d:|y-x|<r\}$
denote the open Euclidean ball centered at $x$ of radius $r>0$. For any
locally finite point set $\omega\in \Omega$, define the random open set
\begin{equation}
  \label{eq:Def.Cinfty}
  \mathcal{C}(\omega)
  :=
  \bigcup_{x\in \omega}B_{\frac 12}(x)
  \subset \R^d.
\end{equation}
The set $\mathcal C(\omega)$ decomposes into connected components. Whenever
there exists a unique open, connected, unbounded component, we denote it by
$\mathcal C_\infty(\omega)
\subset
\mathcal C(\omega)$.
The boundary of $\mathcal C_\infty(\omega)$ is denoted by
$\partial \mathcal C_\infty(\omega)$. We further define
\begin{equation}
  \label{eq:Omega.0}
  \Omega_0
  :=
  \bigg\{
  \omega\in \Omega:
  \mathcal C_\infty(\omega)
  \text{ exists and }
  0\in \mathcal C_\infty(\omega)
  \bigg\}.
\end{equation}
If $\P(\Omega_0)>0$, which we shall assume in
Assumption~\ref{assump:inf-comp}, then we define the conditional probability
measure $\P_0$ on $\Omega_0$ by
\[
\P_0(A)
:=
\P(A\mid \Omega_0)
=
\frac{\P(A\cap \Omega_0)}{\P(\Omega_0)},
\qquad
A\in \mathcal G.
\]

Since $\mathcal C_\infty(\omega)$ is open and connected whenever it exists,
every pair of points
$x,y\in \mathcal C_\infty(\omega)$ can be connected by a curve in
$C^1([0,1];\R^d)$. The intrinsic distance $\d_\omega$ is defined on
$\mathcal C_\infty(\omega)$ by
\begin{align}\label{def domega}
  \d_\omega(x,y)
  =
  \inf\bigg\{
  \int_0^1|\dot r(s)|\,\d s :
  &\ r\in C^1([0,1];\R^d),
  \
  r(0)=x,
  \
  r(1)=y,
  \\
  &
  r(s)\in \mathcal C_\infty(\omega)
  \text{ for all }s\in [0,1]
  \bigg\}.
\end{align}

To state Assumption~\ref{assump:exp-dec-dist-indshift} below, define
$n(\omega,e)\in \N$ for every $e\in \Z^d$ satisfying $|e|_1=1$ and every
$\omega\in\Omega_0$ by
\begin{equation}
  \label{def-n}
  n(\omega,e)
  :=
  \min\{k\in\N:k e\in{\mathcal C_\infty}(\omega)\}.
\end{equation}

\subsubsection{Assumptions on percolation.}
\label{subsec assump perc}

\begin{enumerate}[label=\textbf{(P\arabic*)}]
\itemsep0.2em

\item
$\P$ is stationary and ergodic with respect to
$(\tau_x)_{x\in \R^d}$. Moreover, $\P$ is ergodic with respect to $\tau_e$
for every $e\in \Z^d$ satisfying $|e|_1=1$; namely, every
$A\in \mathcal{G}$ satisfying $\tau_e A = A$ also satisfies
$\P(A)\in \{0,1\}$.
\label{assump:est-erg}

\item
The intensity measure $\Theta$ defined in \eqref{eq:Theta-def} satisfies
$\Theta(A)<\infty$ for every compact set $A\subset \R^d$. In particular,
$\Theta=\zeta \lambda$ for some $\zeta\in (0,\infty)$.
\label{assump:intensity}

\item
Recall the definitions of $\mathcal C(\omega)$ from
\eqref{eq:Def.Cinfty} and $\Omega_0$ from \eqref{eq:Omega.0}. We assume that
$\P(\Omega_0)>0$. Equivalently, with positive $\P$-probability, the random set
$\mathcal C(\omega)\subset \R^d$ possesses a unique open, connected, unbounded
component $\mathcal C_\infty(\omega)$ containing the origin.
\label{assump:inf-comp}

\item
\begin{enumerate}

\item
The Palm distribution $\P^{\ssup{x,y}}$ defined in
\eqref{eq:multidim-campbell-eq} and the intrinsic distance
$\d_\omega(x,y)$ defined in \eqref{def domega} satisfy, for some
$c_0,c_1,c_2>0$,
\begin{equation}
\label{eq:chem_dist_ineq}
\P^{\ssup{x,y}}
\big(
\d_\omega(x,y)\geq
c_0|x-y|_{\infty};
\
0,x,y\in {\mathcal C_\infty}
\big)
\leq
c_1 \e^{-c_2 |x-y|_{\infty}}
\qquad
\forall x,y\in \R^d.
\end{equation}

\item
There exist constants $c_3,c_4>0$ such that, for every $\varrho>0$,
\begin{equation}\label{eq:exp-dec-dist-indshift}
\P_0\big(
|\mathfrak{v}_e(\omega)|>\varrho
\big)
\leq
c_3\e^{- c_4 \varrho},
\qquad
\mathfrak v_e:=n(\omega,e) e,
\qquad
\forall\, e\in \Z^d \text{ with }|e|_1=1.
\end{equation}
\label{assump:exp-dec-dist-indshift}

\end{enumerate}
\label{assump:chem-dist}

\item
The FKG inequality holds. Namely, if $A_1,A_2\subset \Omega$ are increasing
events, then $\P(A_1\cap A_2)\geq \P(A_1)\P(A_2)$. Here increasing means whenever
$\omega\subset \omega^\prime$ and $\omega\in A_i$, then
$\omega^\prime\in A_i$ for $i=1,2$.
\label{assump:fkg}

\item
If $\d(0,\partial\mathcal C_\infty)$ denotes the Euclidean distance from the
origin to the boundary of the infinite cluster, then
\[
\E_0[\d(0,\partial\mathcal C_\infty)^{-\chi}]<\infty
\qquad\text{if and only if}\qquad
\chi\in(0,1).
\]
\label{assump:dist}
\end{enumerate}
In~\cite[Theorem~2.4]{BMM26} we verify these assumptions for concrete examples
of continuum percolation models. We emphasize that while
Assumptions~\ref{assump:est-erg}--\ref{assump:fkg} are primarily structural,
Assumption~\ref{assump:dist} is not a technical input into the homogenization
proof itself, but instead plays a more conceptual role. Indeed, it identifies
the geometric integrability regime within which the admissible degeneracy of
the diffusion and the coercivity assumptions imposed on the Hamiltonian remain
compatible. In this sense, Assumption~\ref{assump:dist} serves as a bridge
between the geometry of the continuum percolation cluster and the analytic
requirements of the homogenization theory; see
Remark~\ref{remark P6} below.

\subsection{Assumptions on the HJB equation.}
\label{sec-HJB}

Denote by $\mathcal{S}_d$ the space of $d\times d$ symmetric matrices. There
is a natural partial order on $\mathcal{S}_d$: for $A,B\in\mathcal{S}_d$, we
write $A\leq B$ if $B-A$ is positive semidefinite. For any symmetric positive
semidefinite matrix $a$ (defined below in Assumption~\ref{f1}), denote by
$\sigma\in \mathcal{S}_d$ the unique symmetric positive semidefinite matrix
satisfying $a=\frac12\sigma\sigma$.

We further define the inner product
$\langle\cdot,\cdot\rangle_a=\langle\cdot,\cdot\rangle_{a(\omega)}$
and the associated seminorm by
\begin{equation}\label{eq:a-inner-prod}
\langle v,w\rangle_a
:=
\langle a(\omega)v,w\rangle
=
\langle v,a(\omega)w\rangle,
\qquad
\|v\|_a
:=
\sqrt{\langle v,v\rangle_a},
\qquad
v,w\in\R^d.
\end{equation}

We are now ready to state the assumptions on the diffusion coefficient $a$,
the Hamiltonian $H$, and the initial condition $f$.

\subsubsection{Assumptions on the diffusion coefficient}

\begin{enumerate}[label=\textbf{(A\arabic*)}]
\itemsep0.2em

\item
\label{f1}

\begin{enumerate}

\item
The map $a:\Omega\to\mathcal S_d$
is positive semidefinite, and
$a(x,\omega):=a(\tau_x\omega)$
defines a stationary process with respect to the action of
$\{\tau_x\}_{x\in\R^d}$ on $(\Omega,\mathcal G,\P)$; recall
\eqref{def-stationary}. Moreover, for every
$\omega\in\Omega_0=\{0\in\mathcal C_\infty\}\subset\Omega$,
\[
\supp(a(\cdot,\omega))
\subset
\overline{\mathcal C_\infty(\omega)}.
\]

\item
The maps
\[
x\mapsto a(x,\omega)=a(\tau_x\omega)
\qquad\text{and}\qquad
x\mapsto \xi(\tau_x\omega)
\]
are globally Lipschitz continuous, and the square-root map
\[
x\mapsto \sigma(x,\omega)
\]
is locally Lipschitz continuous away from the boundary.

\item
Moreover,
\[
\mathcal C_\infty(\omega)\ni x
\mapsto
\div a(x,\omega)\in\R^d
\]
is locally Lipschitz continuous away from the boundary, and
$|\div a|$ is uniformly bounded.

\item
The restriction of $a$ to $\Omega_0$ satisfies the following ellipticity
bounds: there exist $c_5\in(0,\infty)$ and a measurable function
$\xi:\Omega_0\to(0,\infty)$ such that $\P_0$-almost surely,
\begin{equation}\label{eq:ellip-bounds}
\xi(\omega)|v|^2
\leq
\langle a(\omega)v,v\rangle
\leq
c_5|v|^2,
\qquad
\forall\, v\in\R^d.
\end{equation}

\end{enumerate}

\item
\label{f1'}

We assume that there exist $\delta>0$ and
\begin{equation}\label{alpha delta relation}
\alpha
>
2\left(\frac{1+\delta}{1-\delta}\right)
>
1+\delta
\end{equation}
(see also \eqref{eq:H1-H} below) such that the function $\xi(\cdot)$ in
\eqref{eq:ellip-bounds} satisfies
\begin{equation}\label{eq:xi-mom-bound}
\E_0\big[\xi^{-\chi}\big]<\infty,
\qquad
\chi=\chi(\alpha,\delta)
:=
\frac{\alpha}{2}
\frac{1+\delta}{\alpha-(1+\delta)}.
\end{equation}

\end{enumerate}

A concrete example satisfying the above assumptions is given by an explicit construction of a regularized
distance function to the boundary of the infinite cluster in
\cite[Sec.~3.3]{BMM26}.

\subsubsection{Assumptions on the Hamiltonian and initial condition}

\begin{enumerate}[label=\textbf{(H\arabic*)}]
\itemsep0.2em

\item
\label{f2}

The Hamiltonian
$H:\R^d\times\Omega\to\R$
satisfies, for every $\omega\in\Omega$, that the map
$p\mapsto H(p,\omega)$ is convex. Moreover, there exist constants
$c_6,\dots,c_9>0$ such that for all
$(p,\omega)\in\R^d\times\Omega_0$,
\begin{equation}\label{eq:H1-H}
c_6\|p\|_a^\alpha-c_7
\leq
H(p,\omega)
\leq
c_8\|p\|_a^\alpha+c_9.
\end{equation}
In addition, $H(p,\omega)=0$ for every $\omega\notin\Omega_0$.
Here $\alpha>1+\delta$ and $\delta>0$ are as in
\eqref{alpha delta relation}. Equivalently, defining
$\alpha^\prime:=\frac{\alpha}{\alpha-1}$,
there exist constants $c_{10},\dots,c_{13}>0$ such that
\begin{equation}\label{eq:H1-L}
c_{10}\|q\|_a^{\alpha^\prime}-c_{11}
\leq
L(q,\omega)
\leq
c_{12}\|q\|_a^{\alpha^\prime}+c_{13},
\end{equation}
where
\begin{equation}\label{def L}
L(q,\omega)
:=
\sup_{p\in\R^d}
\big[
\langle p,q\rangle_a-H(p,\omega)
\big].
\end{equation}
Moreover, the map
$x\mapsto L(q,\tau_x\omega)$
is continuous and
$
L(q,\omega)=0
\qquad
\text{for every }\omega\notin\Omega_0.
$

\item
\label{f3}

For every $x\in\R^d$, define
$H(x,p,\omega):=H(p,\tau_x\omega)$ and
$L(x,q,\omega):=L(q,\tau_x\omega)$.
By Assumption~\ref{assump:est-erg}, $\P$ is stationary with respect to
$\{\tau_x\}_{x\in\R^d}$. Consequently, the maps
\[
x\mapsto H(x,p,\omega)=H(p,\tau_x\omega),
\qquad
x\mapsto L(x,q,\omega)=L(q,\tau_x\omega)
\]
define stationary processes with respect to
$\{\tau_x\}_{x\in\R^d}$ and $\P$. We further assume that there exist constants $c_{14},c_{15}>0$ such that, for
every $\omega\in\Omega_0$, every $x,y\in\R^d$, and every $p\in\R^d$,
\begin{equation}\label{eq:H2}
|H(x,p,\omega)-H(y,p,\omega)|
\leq
(c_{14}|p|^\alpha+c_{15})|x-y|.
\end{equation}

\item
\label{f4}

The initial condition $f:\R^d\to\R$
is uniformly continuous. In particular, for every $\delta>0$, there exists
$K_\delta>0$ such that for all $x,y\in\R^d$,
\begin{equation}
\label{eq:H6}
|f(x)-f(y)|
\leq
K_\delta |x-y|+\delta.
\end{equation}

\end{enumerate}

A basic example of a Hamiltonian satisfying
Assumptions~\ref{f2}--\ref{f3} is
\[
H(x,p,\omega)
=
\bigl(1+\langle p,a(x,\omega)p\rangle\bigr)^{\alpha/2}
\asymp
1+\|p\|_{a(x,\omega)}^\alpha,
\]
for $\alpha\ge2$; see~\cite[Example~2.2]{BMM26}. The associated Lagrangian
then satisfies \eqref{eq:H1-L}.

\begin{remark}[Geometry, coercivity and admissible degeneracy]\label{remark P6}
We underline the following interplay between \ref{assump:dist}, \ref{f1}-\ref{f1'} and \ref{f2}: the geometry of the 
percolation cluster determines through \ref{assump:dist} the admissible range of negative moments of the
distance-to-the-boundary function, which in turn constrains the allowable
degeneracy of the diffusion coefficient in \ref{f1'}. At the same time, the coercivity of
the Hamiltonian in \ref{f2} dictates how much degeneracy can be accommodated within the
homogenization theory.

Indeed, \eqref{alpha delta relation} and \eqref{eq:xi-mom-bound} imply
that $\chi<1$. Moreover, the lower bound in
\eqref{eq:ellip-bounds} and the Lipschitz continuity assumption in
\ref{f1}(b) imply that
$\xi(\omega)\lesssim d(0,\partial\mathcal C_\infty(\omega))$.
Consequently, the moment bound in \eqref{eq:xi-mom-bound} yields
\[
\E_0\Big[
d(0,\partial\mathcal C_\infty)^{-\chi}
\Big]
<\infty,
\]
which is consistent with Assumption~\ref{assump:dist}. More generally, Assumption~\ref{assump:dist} shows that the
distance-to-the-boundary function admits negative moments only up to
exponents {\it strictly} smaller than one. This threshold already appears in the supercritical Boolean continuum
percolation model (see \cite[Theorem~2.4]{BMM26}) and therefore reflects a
genuine geometric feature of the underlying random medium rather than a
technical artifact of the analysis. On the other hand, the exponent $\chi$
required in the homogenization theory depends explicitly on the coercivity
exponent $\alpha$ of the Hamiltonian in \ref{f2}: stronger coercivity permits weaker
integrability assumptions on the ellipticity function $\xi$, while the
geometry of the continuum cluster imposes the intrinsic restriction
$\chi<1$. Consequently, the admissible degeneracy regime emerges from a balanced 
interplay between the geometry of the random cluster and the coercive
structure of the Hamiltonian. In this sense, the moment condition imposed on
$\xi$ is plausibly close to optimal.\qed 
\end{remark}

\section{The main result and proof outline}\label{sec-results}

In Section \ref{sec-main-results} we state the main homogenization theorem and record several
consequences and interpretations. The principal ingredients of the proof are
discussed in Section~\ref{sec proof ideas}.

\subsection{Main results.}\label{sec-main-results}

Here $(\mathscr{X},\mathcal{F},\{\mathcal F_t\}_{t\geq 0},P)$ is a filtered
probability space carrying an auxiliary $d$-dimensional Brownian motion
$(B_t)_{t\geq 0}$ adapted to the filtration $(\mathcal{F}_t)_{t\geq 0}$. We
assume that the law $P$ of $(B_t)_{t\geq 0}$ is independent of the law $\P$ of
the point process discussed in Section~\ref{sec-assume-models}. Let
\begin{equation}\label{def-class-C}
\begin{aligned}
\mathbf C_T
=
\bigg\{
c:[0,T]\times \mathscr X \to \R^d:
c \mbox{ is progressively measurable and }
E^P\bigg[\int_0^T|c(s)|^2\,\d s\bigg]<\infty
\bigg\}.
\end{aligned}
\end{equation}

Under Assumptions \ref{assump:est-erg}--\ref{assump:inf-comp} and \ref{f1},
for every $c\in \mathbf C_T$, every $\omega\in \Omega_0$, and every starting
point $x\in \mathcal C_\infty(\omega)$, consider the controlled diffusion
process
\begin{equation}\label{eq:controlled-sde}
	X_t = x + \int_0^t \sigma(X_s)\d {B}_s+ \int_0^t
        (\mathrm{div}\;\! a )(X_s)\d s+\int_0^ta(X_s)c(s)\d s
        \quad\mbox{a.s.}\quad \forall t\geq 0,
\end{equation}
written alternatively as
\begin{equation}\label{SDE}
   \d X_t= \mathbf{b}(X_t,c_t) \d t + \sigma(X_t) \d B_t,
   \qquad\mbox{where}\quad
   \mathbf b(y,c)= a(y)c+ \mathrm{div}\,a(y).
 \end{equation}
By \cite[Theorem~2.1]{BMM26}, the diffusion almost surely does not hit
$\partial\mathcal C_\infty(\omega)$ and \eqref{eq:controlled-sde} therefore admits a unique global strong
solution whose law is denoted by $P^{c,\omega}_x$. Define 
\begin{equation}\label{eq:u-eps-def}
	u_{\eps}(t,x,\omega):= \sup_{c\in \mathbf{C}_T} \eps\, J^\omega_{f_\eps}
        \bigg(\frac t\eps, \frac x \eps, c\bigg)
\end{equation}
with
\begin{align}
J_f^\omega(t,x,c)
:=
E^{P_x^{c,\omega}}
\bigg[
f(X_t)-\int_0^t L(X_s,c(s),\omega)\,\d s
\bigg],
\qquad
f_\eps(x):=\frac{1}{\eps} f(\eps x).
\label{eq:cost-fun-def}
\end{align}
It is shown in \cite[Theorem~2.3]{BMM26} that under Assumptions \ref{assump:est-erg}--\ref{assump:inf-comp}, \ref{f1} and \ref{f2}--\ref{f4}, for any fixed $\eps>0$ and $\omega\in \Omega_0$, $u_\eps$ is a viscosity solution of the rescaled
Hamilton--Jacobi--Bellman equation \eqref{eq-HJB'} of at most linear growth,
without requiring any boundary condition. 

Given $\eps,R>0$ and $\omega\in\Omega_0$, define the random domain 
\begin{equation}
\label{def:D-eps}
D(\eps,R,\omega):=\eps \mathcal C_\infty(\omega)\cap B_R(0).
\end{equation}
Here is the main result of the paper: 
\begin{theorem}
\label{thm}
Assume \ref{assump:est-erg}--\ref{assump:dist}, \ref{f1}--\ref{f1'}, and
\ref{f2}--\ref{f4}. Then $\P_0$-almost surely, for every $T,R>0$ and every
$p\geq 1$,
\begin{equation}
\lim_{\eps\to 0} \sup_{0\leq t \leq T} 
\frac 1 {|D(\eps,R,\omega)|}
\int_{D(\eps,R,\omega)}
\big|u_\eps(t,x,\omega)-u_{\rm hom}(t,x)\big|^p\,\d x
=0,
\end{equation}
where $u_{\rm hom}$ is the unique viscosity solution of
\begin{equation}\label{eq:hom-pde}
  \begin{cases}
	\partial_tu_{\rm hom}= \overline H(\nabla u_{\rm hom})&
        \text{ in } (0,\infty)\times \R^d,\\
           u_{\rm hom}(0,x)= f(x)&\text{ on } \R^d.
\end{cases}
\end{equation}
Here, the effective Hamiltonian $\overline H$ is given by the dual variational
formula
\begin{equation}
  \label{eq:effective.hamiltonian}
  \begin{aligned}
    \overline{H}(\theta)
    &=
    \sup_{(b,\phi)\in \mathcal{E}}
    \bigg(
    \int
    \Big[
    \frac12\mathrm{div}(a\theta)
    +
    \langle \theta,b\rangle_a
    -
    L(b,\omega)
    \Big]
    \phi(\omega)\,\P_0(\d\omega)
    \bigg)
\\
    &=
    \inf_{G\in \mathcal{G}_{\delta}}
    \left(
    \mathrm{ess\,sup}_{\P_0}
    \left[
    \frac12\mathrm{div}(a(G+\theta))
    +
    H(G+\theta)
    \right]
    \right),
	\end{aligned}
\end{equation}
where the classes $\mathcal E$ and $\mathcal G_\delta$ are defined in
\eqref{class-E2} and Section~\ref{classG}, respectively. Moreover,
$u_{\rm hom}$ is given by the Hopf--Lax formula
\begin{equation}\label{eq:lax-oleinik}
  	u_{\rm hom}(t,x)
    =
    \sup_{y\in \R^d}
    \bigg(
    f(y)-t\,\mathcal I\Big(\frac{y-x}{t}\Big)
    \bigg),
  \qquad
 \mathcal I(y)
 :=
 \sup_{\theta\in \R^d}
 \big[
 \langle \theta,y\rangle-
 \overline H(\theta)
 \big].
\end{equation}
\end{theorem}

Theorem~\ref{thm} shows that, despite the lack of stationarity of the
conditioned law $\P_0$ and the presence of a degenerate diffusion on a random
continuum percolation cluster, the rescaled value functions admit a
deterministic homogenized limit. The effective Hamiltonian retains information
about both the geometry of the cluster and the degeneracy of the diffusion
through the admissible gradient class $\mathcal G_\delta$ and the variational
formula \eqref{eq:effective.hamiltonian}. 

\begin{remark}[Local $L^p$ convergence in $D(\eps,R,\omega)$]\label{remark-spatial-average}
A distinctive feature of Theorem~\ref{thm} is that homogenization occurs locally in $L^p$ for all $p\geq 1$ after spatial averaging over the random domains
\[
D(\eps,R,\omega)
=
\eps\mathcal C_\infty(\omega)\cap B_R(0).
\]
This is in contrast to the classical stochastic homogenization theory for
HJ equations on the full space $\R^d$, where 
locally uniform convergence is shown e.g. in Kosygina-Rezakhanlou-Varadhan \cite{KRV06}.

This difference is intrinsic to the continuum percolation geometry. Indeed,
the equations are posed on the random domains
$\eps\mathcal C_\infty(\omega)$, whose complements contain microscopic holes
throughout space. To see why locally uniform convergence is not the natural
notion here, suppose one extends $u_\eps$ to all of $\R^d$ by setting
\[
\widetilde u_\eps(t,x,\omega)
=
f(x),
\qquad
x\notin \eps\mathcal C_\infty(\omega).
\]
If $\widetilde u_\eps$ converged locally uniformly to the continuous limit
$u_{\rm hom}$, then the density of microscopic holes in the macroscopic limit
would force $u_{\rm hom}(t,x)=f(x)$ for all
$t\ge0$ and $x\in\R^d$. Indeed, for every $x\in\R^d$ one can choose
$x_\eps\notin \eps\mathcal C_\infty(\omega)$ with $x_\eps\to x$; hence
\[
u_{\rm hom}(t,x)
=
\lim_{\eps\to0}\widetilde u_\eps(t,x_\eps,\omega)
=
\lim_{\eps\to0} f(x_\eps)
=
f(x), 
\]
which is a trivial homogenized evolution which is incompatible with the nontrivial effective equation
\eqref{eq:hom-pde}. Theorem~\ref{thm} hence uses the natural mode of convergence compatible
with continuum percolation geometry. 
\qed
\end{remark}

\begin{remark}[Non-divergence form]\label{remark-divergence-form}
The equation \eqref{eq-HJB'} can also be rewritten in non-divergence form as
$$
\begin{aligned}
&\partial_t u_\eps=\frac 12 \mathrm{Trace}\Big(a\big(\frac x \eps,\omega\big)
\mathrm{Hess}_x u_\eps\Big)
+
\hat H\Big(\frac x \eps,\nabla u_\eps,\omega\Big),
\quad \mbox{with}
\quad \hat H(x,p,\omega)
=
H(x,p,\omega)
+
\frac 12 \mathrm{div}(a(x,\omega))\cdot p .
\end{aligned}
$$
Then our assumptions on $H$ translate to assumptions on $\hat H$ if we
additionally assume that $|p\cdot \mathrm{div}\,a|\leq C\|p\|_a$, which is
stronger than our current assumption $|\mathrm{div}\,a|\leq C^\prime$ in
Assumption~\ref{f1}. Consequently, homogenization of the above equation is
covered by Theorem~\ref{thm}. Also note that since the homogenized equation
\eqref{eq:hom-pde} is defined in $(0,\infty)\times \R^d$, the choice of $T$ in
\eqref{eq-HJB'} does not play any role as $\eps\to0$.
\qed
\end{remark}

\begin{remark}[Quenched large deviations]\label{remark LDP}
A particular case of $H$ which is appealing from a probabilistic viewpoint is
the quadratic Hamiltonian
\begin{equation}
  \label{Hb}
 H_b(p,\omega):= \frac 12 \|p\|_{a}^2+ \langle b(\omega),p\rangle_a.
\end{equation}
For any $\omega\in \Omega_0$, let $P^\omega_0$ denote the law of the diffusion
\begin{equation}\label{eq-diffusion}
  \d X_t= \sigma(X_t,\omega) \d W_t
  + \mathrm{div}\!\; a(X_t,\omega)\d t
  +a(X_t,\omega)b(X_t,\omega)\,\d t
\end{equation}
starting at $0$ in the environment $\omega$, where $(W_t)_{t\geq 0}$ is a
standard Brownian motion in $\R^d$ whose law is independent of $\P$. Let
$b:\Omega\to \R^d$ be such that $x\mapsto b(\tau_x\omega)$ defines a
stationary process with respect to translations, $H_b$ defined in \eqref{Hb}
satisfies \eqref{eq:H2} (for instance, it suffices to assume that
$x\mapsto b(x,\omega)$ is bounded and Lipschitz), and the diffusion process
$X_t$ above does not hit the boundary of the cluster in finite time almost
surely. Then, under the assumptions of Theorem~\ref{thm}, for
$\P_0$-almost every realization $\omega\in \Omega_0$, the distributions
$P^\omega_0[X_t/t \in \cdot]$ satisfy a quenched large deviation principle with
rate function
\begin{align}
&I(x)= \sup_{\theta\in \R^d} \{\langle \theta, x\rangle- \overline H(\theta)\},
\label{def-rate} \quad\mbox{with}\\
&\overline H_b (\theta)=
\inf_{G\in \mathcal{G}_{\delta}}
\,\mathrm{ess\,sup}_{\P_0}
\bigg[
\frac 12 \mathrm{div} (a\,G)
+
\langle b,G{+}\theta\rangle_a
+
\frac 12 \|G{+}\theta\|_{a}^2
\bigg],
\label{def-conj-rate}
 \end{align}
where $\overline H_b$ is defined as in \eqref{eq:effective.hamiltonian}. In
other words, for $\P_0$-a.e. $\omega\in \Omega_0$, every open set
$G\subset \R^d$, and every closed set $F\subset \R^d$,
\[
\liminf_{t\to\infty}
\frac1t\log P^\omega_0\big[X_t/t \in G\big]
\geq
-\inf_{x\in G} I(x),
\qquad
\limsup_{t\to\infty}
\frac1t\log P^\omega_0\big[X_t/t \in F\big]
\leq
-\inf_{x\in F} I(x).
\]
We refer to Remark~\ref{remark proof LDP} for details.
\qed
\end{remark}

\begin{remark}[Quenched diffusion in a random potential]\label{remark-potential}
We can also consider Hamiltonians of the type
\begin{equation*}
	H_{b,V}(p,\omega):=\frac{1}{2}\|p\|^2_a+\langle b(\omega),p\rangle_a-V(\omega),
\end{equation*}
and establish a $\P_0$-almost sure large deviation principle for the
distribution of $X_t/t$ under the measure
$\d Q_{0}^\omega \propto \e^{-\int_0^t V(X_s,\omega)\,\d s}\,\d P_{0}^{\omega}$
under suitable moment assumptions on the potential $V$ with respect to $\P_0$.
This corresponds to an absorbing random environment; see the
fundamental work of Sznitman~\cite{S94} on Brownian motion in a Poissonian
potential; see also the survey of Kosygina~\cite[Section~7]{K08}.
\qed
\end{remark}

\begin{remark}[Boundary conditions]\label{remark Neumann}
As noted in Remark \ref{remark P6}, the geometry of the random boundary enters the present theory through the degeneracy of the diffusion coefficient and the moment condition of Assumption~\ref{assump:dist}. Also, the effective limit retains quantitative information about
the distance to the boundary of the continuum percolation cluster. One may ask whether the present framework can be extended to uniformly elliptic diffusions equipped with reflecting (Neumann or oblique) boundary conditions on the random boundary. Such an extension would require controlling the associated boundary local time on a highly irregular random boundary and incorporating its contribution into the variational theory. Since the geometric regularity available in continuum
percolation is substantially weaker than that typically assumed in the
classical theory of reflected diffusions, this appears to introduce additional analytical and probabilistic difficulties. We leave this question for future work.\qed 
\end{remark}

\subsection{Ingredients of the proof.}
\label{sec proof ideas}

The goal of this section is to highlight the main ingredients of the proof.
As a guiding philosophy, we draw inspiration from a novel method developed by
Kosygina--Rezakhanlou--Varadhan \cite{KRV06} for treating viscous
Hamilton--Jacobi equations in stationary ergodic environments; see also
Kosygina \cite[Sec.~6]{K08} for a survey of this approach and
Kosygina--Varadhan \cite{KV08} for an extension to time-dependent settings.
The roots of this method go back to the pioneering work of
Lions--Papanicolaou--Varadhan \cite{LPV87} and to the framework of the
{\it environment seen from the particle} developed by Papanicolau- Varadhan and Kozlov in
\cite{PV81,PV82,K85,KV86}.

While the variational philosophy of \cite{KRV06} serves as an important source
of inspiration, the continuum percolation framework here combines
several sources of difficulty for which new ideas need to be developed. Before outlining those, we briefly recall the main ingredients
of the method of \cite{KRV06}.

\medskip

\noindent{\bf The approach of \cite{KRV06}.}
Consider a stationary ergodic environment
$(\Omega,\mathcal F,\P)$ and a viscous Hamilton--Jacobi equation on the full
space $\R^d$ with $a(\omega)\equiv \mathrm{id}$ and convex Hamiltonian
satisfying $H(p,\omega)\asymp |p|^\alpha$. We briefly recall the three main
ingredients of the method developed in \cite{KRV06}.

\smallskip

\noindent{\it Lower bound.}
The starting point is the optimal-control representation of the solution. One
restricts the controls to stationary controls of the form
$c(x,\omega)=b(\tau_x\omega)$ and studies the corresponding environment process
seen from the particle. Using invariant densities for the generator of the
environment process together with ergodic properties of the stationary
environment, one obtains a deterministic variational lower bound for the
effective Hamiltonian.

\smallskip

\noindent{\it Convex variational analysis.}
The lower bound naturally leads to a variational problem involving drift fields
and invariant densities. By introducing suitable Lagrange multipliers and
applying min--max arguments, one constructs approximate gradients whose weak
limits yield stationary mean-zero gradients
$v\in L^\alpha(\P)$. A crucial feature of the stationary ergodic setting is
that both stationarity and the mean-zero property are inherited directly from
the translation invariance of $\P$.

The success of this min--max approach relies, among other requirements, on
suitable compactness properties of the underlying variational problems. In the
stationary ergodic setting of \cite{KRV06}, this compactness becomes available
by restricting the relevant variational problems to bounded regions and then
passing to the limit. Combined with the coercivity assumptions on the
Hamiltonian, this leads to sufficient moment bounds to extract weak limits of
the approximate gradients and ultimately construct the admissible stationary
mean-zero gradient fields entering the variational characterization of the
effective Hamiltonian.

\smallskip

\noindent{\it Upper bound.}
The stationary gradient $v$ gives rise to a corrector
$V(x,\omega)
=
\int_{0\to x}\langle v,\d z\rangle$.
A key step is to establish the sublinear growth property
$V(x,\omega)=o(|x|)$ as $|x|\to\infty$.
This follows from the mean-zero property of $v$, ergodicity, and the coercivity
assumptions imposed on the Hamiltonian. One then perturbs the affine function
$\langle p,x\rangle+t\overline H(p)$
by the corrector $\eps V(x/\eps,\omega)$ and uses comparison arguments to
obtain a matching upper bound, thereby completing the homogenization proof.

\medskip

\noindent{\bf The current method.}
We now explain the main ideas underlying the proof in the continuum
percolation setting. For the lower bound, we also work with the 
environment seen from the particle. Here, since $\P_0$ is not invariant
under the usual translations $\tau_x$, the classical stationary ergodic theory
cannot be applied. To overcome this difficulty, we introduce an
induced shift adapted to the geometry of the infinite cluster. This shift is
defined through successive arrivals of $\mathcal C_\infty$ along the coordinate
directions and therefore depends intrinsically on the underlying percolation
configuration. We establish the ergodic properties of the environment process for the controlled diffusion with respect to this random shift and $\P_0$ in
Section~\ref{sec ergodic induced}.

Next we start with the stochastic representation formula together with square-integrable
progressively measurable controls on an auxiliary probability space
$(\mathscr X,\mathcal F,P)$. We derive a variational lower bound by working
with Lipschitz drift fields
$b\in L^1_a(\phi\,d\P_0)$; see Section~\ref{sec-b-phi}. The ergodicity of the
above induced shift under $\P_0$ can then be used to obtain uniform asymptotic lower bound at the origin;
these are done via a priori estimates shown in Lemmas~\ref{lemma:control_restriction}-\ref{lemma:lower-bound-lemma1-extended}. A further substantial step is then required to
upgrade this pointwise statement to the locally averaged $L^p$ lower bound
appearing in Theorem~\ref{thm lower bound}. The argument for its proof is actually quite subtle -- to invoke the a priori estimates developed in Section \ref{subsec proof thm}, one has to trade carefully with the supremum over the invariant pairs $(b,\phi)\in\mathcal E$ in the variational representation of $u_{\rm{hom}}$ (see Lemma \ref{lemma2}) and the necessity to take spatial averages over the randomly evolving domains $D(\eps,R,\omega)=\eps\mathcal C_\infty(\omega)\cap B_R(0)$. As noted in Remark~\ref{remark-spatial-average}, this part is also closely tied to the geometry of the continuum percolation cluster and to the imperative role of spatial averaging in the homogenization process. The detailed analysis is carried out in Section~\ref{sec proof lb}.

The variational analysis in Section~\ref{sec-equivalence} constitutes one of
the main conceptual and technical innovations of the paper. Here again, the
lack of translation invariance of $\P_0$ enters in an essential way and
significantly influences the structure of the variational theory. Concretely, one cannot expect limiting gradients to satisfy the classical mean-zero condition appearing in stationary ergodic
homogenization. Instead, the geometry of the infinite cluster naturally leads
to an induced mean-zero condition formulated in terms of the induced shifts
introduced above.

To construct gradients satisfying this condition, we develop a new min--max
framework. Unlike the stationary ergodic setting, the restriction of the
variational problems to bounded regions is not compatible with the geometric
information encoded by the induced shifts. Indeed, the induced mean-zero
condition requires keeping track of arbitrarily long random excursions of the
continuum cluster, while gradients are still defined through the usual
translations $\tau_x$, which do not preserve $\P_0$. A different compactness
mechanism is therefore required.

The first min--max step exploits the intrinsic coercivity of the Hamiltonian
with respect to the degenerate metric induced by $a$. This coercivity
propagates to the accompanying variational problem and yields the compactness
needed to pass to weak limits. At this stage, the choice of the admissible
class $(b,\phi)\in\mathcal E$ in \eqref{class-E2} is crucial. We work with
Lipschitz drift fields $b\in L^1_a(\phi\,d\P_0)$. Had one restricted to
uniformly bounded drifts $b\in L^\infty(\P_0)$, the resulting class would not
be closed in $L^p(\P_0)$ for $p\ge1$, thereby obstructing the weak compactness
required in the first min--max step; see Lemma~\ref{lemma-minmax1}.

For the second min--max step, we also develop a novel subtractive relative entropy term.
This entropy structure is naturally compatible with the stochastic-control
representation and provides the additional coercivity required to complete the
variational argument; see Lemma~\ref{lemma-minmax2}. Combined with the moment
assumption \eqref{eq:xi-mom-bound},
$$
\chi=
\frac{\alpha(1+\delta)}
{2(\alpha-(1+\delta))} \qquad\mbox{yields a weak limit}\qquad G\in L^{1+\delta}(\P_0).
$$
A noteworthy feature of the present approach is that the limiting gradient
field $G$ automatically inherits the structural properties dictated by the
underlying medium. In particular, $G$ is both curl-free and satisfies the
induced mean-zero condition compatible with the conditioned law $\P_0$ and the
geometry of the infinite cluster; see
Lemmas~\ref{lemma1-lemma-minmax3}--\ref{lemma3-lemma-minmax3} for the detailed
arguments. The resulting variational characterization of the effective
Hamiltonian therefore reflects directly the interaction between the geometry
of the continuum cluster and the degeneracy of the diffusion. Furthermore, the
coercive-entropic variational framework underlying the two-step min--max
construction is not specific to continuum percolation. Indeed, it applies
equally well in the classical setting of stationary ergodic random media on
$\R^d$ (for instance, in the framework of \cite{KRV06}) and is therefore of
independent interest.

The upper bound requires establishing sublinearity of the path integral
\[
V_G(x,\omega)
=
\int_{0\to x}\langle G,\d z\rangle
=
o(|x|)
\qquad\text{as }|x|\to\infty,
\quad \P_0\text{-a.s.}
\]
This step is technically quite involved. For this part of the proof we draw inspiration from ideas developed in~\cite{SS04,BB07} in the analysis of the Kipnis--Varadhan corrector \cite{KV86} for simple
random walks on discrete percolation clusters. At a conceptual level, both
problems require establishing sublinearity of an object obtained by
integrating a gradient-like field along the underlying random medium. Beyond
this analogy, however, there are some key conceptual differences between the two settings. 

Indeed, the quantity $V_G$ arises from the variational structure of a
nonlinear Hamilton--Jacobi--Bellman problem and is tied to a large-deviation
type effective theory rather than to a reversible Markov process. In
particular, the effective behavior is governed by an optimal tilt of the
underlying dynamics, a mechanism which is inherently different from the
reversible framework of \cite{KV86}. Consequently, the object
whose sublinearity must be established is of a fundamentally different nature
from the classical Kipnis--Varadhan corrector. Additional difficulties emerge from lack of uniform ellipticity available for simple random walks inside the percolation cluster, as in our setting the diffusion degenerates according to the distance to the random boundary of the
cluster, and also the continuum geometry of the infinite component renders the
combinatorial counting arguments available in the discrete setting much more subtle. As a
result, the proof instead relies on using the geometric and probabilistic
properties encoded in Assumptions~\ref{assump:est-erg}--\ref{assump:fkg} in a careful manner, together with the induced structure of the limiting gradients established in the variational part in Section \ref{sec-prop-minmax3}. Using these ingredients, we establish the required sublinearity in
Section~\ref{sec-proof-sublinear}.

Having established sublinearity, the upper bound is established in Section~\ref{sec-ub}. Here we use a perturbation and mollification argument and combine these with the techniques developed in the variational part in Section \ref{sec-equivalence} and the aforementioned sublinear bound. We refer to Section \ref{subsec proof ub lin f}- \ref{subsec ProofThmLDP} for details where the subtleties emerging from the geometric constraints imposed by the random cluster need to be circumveneted again.  

We close by mentioning that there is an orthogonal route to
\cite{KRV06} to study homogenization in the stationary ergodic setting based on subadditivity; see
\cite{So99,RT00,LS05,LS10,AT14} which allow very
degenerate situations, including the possibility that $a\equiv0$.

In summary, to the best of our knowledge, the present work provides the first homogenization result for a nonlinear Hamilton--Jacobi--Bellman equation on a genuine continuum percolation cluster. More broadly, it shows that the variational approach to stochastic homogenization can be extended to random geometries governed by nonstationary conditioned laws. The resulting effective theory retains quantitative information about the geometry of the continuum percolation cluster, the degeneracy of the diffusion, and the conditioning induced by the infinite component. In addition, the coercive-entropic variational framework developed here appears to be intrinsic to the stochastic control representation itself and is therefore of interest beyond the present continuum percolation setting.

\noindent{\bf Roadmap of the article.} Section~\ref{sec ergodic induced} establishes the ergodicity of the environment process seen from the particle and the controlled diffusion process under the induced shift and the conditioned measure $\P_0$. The proof of the lower bound is constitutes Section~\ref{sec-lb}, while the associated entropic variational analysis is developed in Section~\ref{sec-equivalence}. Assuming the sublinearity estimate, the upper bound is established in Section~\ref{sec-ub}. The proof of the required sublinearity property is then proved in Section~\ref{sec-proof-sublinear}.

\section{Ergodicity and Diffusion Processes on Continuum Percolation}\label{sec ergodic induced}

\subsection{Environment process on continuum percolation.}\label{su:ErgodicThmEnviron} 
In this section we will prove ergodic theorems in Proposition
\ref{thm-ergodic} - Proposition \ref{prop 1}  for the so-called {\it environment process} which, for homogenization of stationary ergodic random media (at least in the elliptic setting), goes back to the works of Kozlov \cite{K85} and Papanicolau-Varadhan \cite{PV81,PV82}.  In our context, this environment process is a diffusion taking values in the space of {\it conditioned} environments $\Omega_0$. As applications, we will subsequently obtain Corollary \ref{lemma-ergodic} and Corollary \ref{lemma2-ergodic}. 
\subsubsection{The environment seen from the particle.}
\label{sec:env-process}
Recall that the group $\{\tau_x\}_{\R^d}$ acts on $(\Omega,\mathcal G, \P)$ via
translations.  This action allows us to define, for any $u:\Omega\to \R$, its
{\it weak gradient} via
$$
(\nabla_i u)(\omega): = \lim_{\eps\to 0} \frac { u(\tau_{\eps e_i}(\omega)) - u(\omega)}{\eps}, \qquad i=1,\dots, d.
$$
Likewise, we also define the corresponding {\it divergence}. 
Now for $a:\Omega\to \R^{d\times d}$ satisfying \ref{f1}, we set 
\begin{equation}\label{sL}
\begin{aligned}
(\mathscr L^{\ssup b} u)(\omega) : =  \frac{1}{2}\mathrm{div}\big(a(\omega)\nabla u(\omega)\big)+ \langle b(\omega),  \nabla u(\omega)\rangle_a\qquad
\forall \omega \in \Omega_0. 
\end{aligned}
\end{equation} 
For a reasonable class of maps $b:\Omega\to \R^d$ (which does not depend on the
probability space $(\mathscr{X},\mathcal{F},P)$) 
and test functions $u$, $\mathscr L^{\ssup b}$ is the generator of a Markov process
taking values on $\Omega_0$ which can be defined as follows.  Set
$b(x,\omega):=b(\tau_x\omega)$ and let $X_t$ denote the $\R^d$-valued diffusion
solving the SDE
\begin{equation}
  \label{eq:SDE.beta}
	X_t=\int_0^t \sigma(X_s)\d {B}_s+ \int_0^t 
        (\mathrm{div}\;\! a )(X_s)\d s+\int_0^ta(X_s)b(X_s)\d
        s\quad\mbox{a.s.}\quad \forall t\geq 0, 
\end{equation}
with quenched law $P^{{b,\omega}}_0$ and generator 
\begin{equation}\label{eq:operator_def}
	(\mathcal L^{\ssup{b,\omega}}u)(x)= \frac{1}{2} 
  \mathrm{div}\big(a(x,\omega)\nabla u(x))+ \langle b(x,\omega), \nabla u(x)\rangle_a. 
\end{equation}
Then
\begin{equation}\label{def-env-pr}
\overline \omega_t: =\tau_{X_t} \omega
\end{equation}
is the $\Omega_0$-valued diffusion process with generator $\mathscr L^{(b)}$ defined in \eqref{sL}. We call $(\overline\omega_t)_{t\geq 0}$ the {\it environment process} with generator $\mathscr L^{\ssup b}$, and its 
 law with initial condition $\delta_\omega$ is denoted by $Q^{{b,\omega}}$. 
 
\subsubsection{Invariant density for the environment process.}\label{sec-b-phi}
Recall that $\P_0=\P(\cdot| \Omega_0)$. We write $L^1_+(\P_0)$ for the space of all non-negative and $\P_0$-integrable functions on $\Omega$. Any probability density $\phi\in L^1_+(\P_0)$ with $\int \phi \!\;\d\P_0=1$ is an invariant density with respect to $Q^{b,\omega}$ if
\begin{equation}\label{invariance}
\frac{1}{2}\mathrm{div}(a\nabla\phi)=\mathrm{div}(\phi (ab)), \quad\text{i.e., }\, (\mathscr L^{\ssup b})^\star \phi=0,\,\, 
\text{ in } \Omega_0, 
\end{equation}
with the generator $\mathscr L^{\ssup b}$ defined in \eqref{sL}. For any probability density $\phi$, we also
 set \begin{equation}\label{eq:L1a-def}
	L_a^1(\phi\d\P_0):=\bigg\{b:\Omega_0\to \R^d\text{ measurable: } \int \d\P_0 \phi \,\, \|b\|_a< \infty\bigg\}.
\end{equation} 
where we remind the reader from \eqref{eq:a-inner-prod} that 
\begin{equation}\label{eq:L1a-norm-def}
	\|b\|_{L_a^1}(\phi\d\P_0):=\int \d\P_0 \phi\,\,   \|b\|_a= \int
        \P_0(\d\omega) \phi(\omega) \sqrt{\big|\langle b(\omega), a(\omega)
          b(\omega)\rangle \big|}.  
\end{equation}
As usual, $L_a^1(\phi\d\P_0)$ can be turned into a Banach space with the norm
defined in \eqref{eq:L1a-norm-def} by taking the quotient w.r.t the subspace of
functions with zero $L^1_a$-norm.  Finally, for a suitable space $X$ (which
will be specified later on depending on the context), we will denote by
$\mathrm{Lip}=\mathrm{Lip}(X)$ the set of $1$-Lipschitz functions from
$X \to \R^d$. With this background, we define the class
\begin{equation}\label{class-E2}
 \begin{aligned}
   \mathcal E=\bigg\{ ( b, \phi)\in L^{1}_a(\phi \d\P_0)\times L^{1}_+(\P_0)
   &\colon \R^d\ni x\mapsto b(x,\omega) = b(\tau_x\omega) \in
   \mathrm{Lip}(\R^d)\  \forall \, \omega\in \Omega_0, \\ 
   &\int \phi \d\P_0=1,\,\, (\mathscr L^{\ssup b})^* \phi=0\bigg\}.
 \end{aligned}
\end{equation}

\subsection{The ergodic theorems and induced shift.}
\label{suu:Ergodic}
We are now ready to state the main result of this subsection:

\begin{prop}
\label{thm-ergodic}
Suppose that there exists $\phi$ such that  $(b,\phi)\in\mathcal{E}$. Let 
$\mathbb{Q}(\d\omega):=\phi(\omega)\P_0(\d\omega)$. If $\mathbb Q\ll \P_0$,
then the following three implications hold: 
\begin{itemize}
\item $\mathbb Q\sim \P_0$.
\item $\mathbb Q$ is ergodic with respect to the Markov process $Q^{ b,\omega}$.
\item There can be at most one such measure $\mathbb Q$.
\end{itemize}
\end{prop}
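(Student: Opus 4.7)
The plan is to prove the three conclusions in sequence, with uniqueness being formal once the first two are in hand. The strategy rests on two ingredients: the strong maximum principle for the spatial adjoint operator on $\mathcal C_\infty(\omega)$, and the translation ergodicity of $\P$ from \ref{assump:est-erg}. Because $\P_0$ is not translation invariant, the pivotal manoeuvre throughout will be to lift ``a.s.\ statements on $\Omega_0$'' to ``a.s.\ statements on a translation-invariant subset of $\Omega$'', where \ref{assump:est-erg} can be invoked.

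For the equivalence $\mathbb Q\sim\P_0$, I would introduce the spatial profile $h_\omega(x) := \phi(\tau_x\omega)$ on $\mathcal C_\infty(\omega)$; the invariance $(\mathscr L^{\ssup b})^\star \phi = 0$ translates into the divergence-form elliptic equation
\[
\tfrac{1}{2}\mathrm{div}\bigl(a(\cdot,\omega)\nabla h_\omega\bigr) - \mathrm{div}\bigl(a(\cdot,\omega)b(\cdot,\omega) h_\omega\bigr) = 0 \quad\text{on } \mathcal C_\infty(\omega),
\]
whose coefficients are Lipschitz by \ref{f1} and \eqref{class-E2} and uniformly elliptic on every compact subset of $\mathcal C_\infty(\omega)$, since $\xi$ is Lipschitz and strictly positive. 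De~Giorgi--Nash--Moser regularity yields a continuous representative and the Harnack inequality, so the strong maximum principle forces $h_\omega \equiv 0$ or $h_\omega > 0$ on the open connected set $\mathcal C_\infty(\omega)$. The set $A := \{\omega : \mathcal C_\infty(\omega) \text{ exists},\ h_\omega\equiv 0\}$ is translation invariant since $\mathcal C_\infty(\tau_y\omega)=\mathcal C_\infty(\omega)-y$ and $h_{\tau_y\omega}(z)=h_\omega(z+y)$, so \ref{assump:est-erg} forces $\P(A)\in\{0,1\}$; the normalization $\int\phi\,\d\P_0=1$ rules out $\P(A)=1$, and evaluating at $x=0$ gives $\phi>0$ $\P_0$-a.s.

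For the ergodicity of $\mathbb Q$, take an invariant set $A\subset\Omega_0$ for the semigroup, observe that $\1_A(\overline\omega_t)$ is a bounded $\{0,1\}$-valued martingale under $\mathbb Q\otimes Q^{b,\omega}$ and hence a.s.\ constant in $t$, and translate this to the spatial picture as $\1_A(\tau_{X_t}\omega)=\1_A(\omega)$ $P^{b,\omega}_0$-a.s.\ for every $t>0$. Local uniform ellipticity inside the cluster gives the law of $X_t$ a strictly positive Lebesgue density on $\mathcal C_\infty(\omega)$, whence $\1_A(\tau_x\omega)=\1_A(\omega)$ for Lebesgue-a.e.\ $x\in\mathcal C_\infty(\omega)$, for $\mathbb Q$-a.e.\ $\omega$. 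Repeating the lifting trick, $\widetilde A := \{\omega : \mathcal C_\infty(\omega) \text{ exists, and } \tau_x\omega\in A \text{ for Lebesgue-a.e.\ } x\in\mathcal C_\infty(\omega)\}$ is translation invariant, so $\P(\widetilde A)\in\{0,1\}$; and since $A=\widetilde A\cap\Omega_0$ modulo $\mathbb Q$-null sets (taking $x=0$), the dichotomy descends to $\mathbb Q(A)\in\{0,1\}$.

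Uniqueness is then formal: given two invariant probability measures $\mathbb Q_i=\phi_i\,\d\P_0$, the first claim yields $\mathbb Q_1\sim\P_0\sim\mathbb Q_2$ so the null sets coincide, while the second claim gives ergodicity of each; applying the Markov ergodic theorem under both measures simultaneously produces a single almost-sure limit for $\tfrac{1}{T}\int_0^T F(\overline\omega_t)\,\d t$, forcing $\int F\,\d\mathbb Q_1=\int F\,\d\mathbb Q_2$ for every bounded measurable $F$ and hence $\mathbb Q_1=\mathbb Q_2$. The main obstacle I anticipate is the systematic need to upgrade ``a.e.\ with respect to the non-invariant $\P_0$'' to ``a.e.\ with respect to the ambient translation-invariant $\P$'' in order to apply \ref{assump:est-erg}, which is precisely what the lifting $A\mapsto\widetilde A$ handles; a secondary technical point is securing a continuous representative of $\phi$ for the strong maximum principle to bite, supplied by De~Giorgi--Nash--Moser regularity for the divergence-form equation satisfied by $h_\omega$.
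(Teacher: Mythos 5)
Your argument is correct in outline, but it follows a genuinely different route from the paper, and in one place it rests on regularity machinery that the paper deliberately avoids. The paper proves $\mathbb Q\sim\P_0$ without any PDE theory: it argues directly at the level of the set $A=\{\phi>0\}$, using only that the diffusion has strictly positive transition density on $\mathcal C_\infty(\omega)$ to conclude that $\P_0$-a.e.\ $\omega\in A$ satisfies $\tau_x\omega\in A$ for Lebesgue-a.e.\ $x\in\mathcal C_\infty(\omega)$; it then passes from ``a.e.\ $x$'' to ``all $x\in\R^d$'' via continuity of translations in $L^1(\P)$, specializes to $x=n(\omega,e)e$, and invokes ergodicity of the induced shift $\sigma_e$ under $\P_0$ (Proposition~\ref{prop 1}). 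You bypass $\sigma_e$ entirely and instead lift the statement to a translation-invariant event on $\Omega$ (your sets $A$ and $\widetilde A$) where \ref{assump:est-erg} applies directly -- a clean idea that avoids the recurrence apparatus, and which also gives a more explicit sketch of the second and third bullets, which the paper dispatches with ``standard arguments.''

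Where your argument demands extra care: to run the strong maximum principle on $h_\omega(x)=\phi(\tau_x\omega)$, you need more than the De~Giorgi--Nash--Moser continuous representative you flag. The class $\mathcal E$ in \eqref{class-E2} only assumes $\phi\in L^1_+(\P_0)$ with $(\mathscr L^{\ssup b})^\star\phi=0$ as an abstract (weak-derivative) identity on $\Omega_0$; you must first show that $h_\omega\in H^1_{\mathrm{loc}}(\mathcal C_\infty(\omega))$ and that the abstract invariance equation descends to a genuine divergence-form PDE for $h_\omega$ in the distributional sense before Moser/Harnack can be applied. This correspondence between probabilistic weak gradients (defined via the $L^2(\Omega)$ group action) and PDE weak gradients of the spatial realization is standard but not automatic, and is precisely the kind of regularity upgrade the paper's set-theoretic argument makes unnecessary. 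Similarly, the pointwise definition of $h_\omega$ requires choosing a representative of $\phi$ and checking that the event $\{h_\omega\equiv 0\}$ is well-defined $\P$-a.s.\ (a Fubini/Campbell argument, as you partly indicate). None of this is fatal, but your route trades the paper's induced-shift lemma for a nontrivial regularity step; the paper's approach is leaner because it never leaves the measure-theoretic level.
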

The proof of the above result will need a simple fact, for which we recall that $\Omega_0=\{\omega\in \Omega\colon 0\in \mathcal C_\infty(\omega)\}$, and also from \eqref{def-n} that $n(\omega,e)= \min \{k\in \N\colon k e \in \mathcal C_\infty(\omega)\}$. We then define the \textit{induced shift} $\sigma_e:\Omega_0\rightarrow\Omega_0$ by setting 
\begin{align}
\label{def-induced}
\sigma_e(\omega)=\tau_{n(\omega,e)}\omega.
\end{align}
Then $\sigma_e$ satisfies the following property:
\begin{prop}
	\label{prop 1}
	For every $e\in \Z^d$ with $|e|_1=1$, the induced shift $\sigma_e:\Omega_0\rightarrow\Omega_0$ is measure preserving and ergodic with respect to ${\P}_0$.
\end{prop}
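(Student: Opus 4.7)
The statement says that $\sigma_e$ is the Kakutani induced transformation of the measure-preserving ergodic system $(\Omega,\mathcal{G},\P,\tau_e)$ associated with the positive-measure base set $\Omega_0$. Accordingly, the plan is to verify the hypotheses of the classical Kakutani theorem and then reproduce the two short bijection arguments used in its proof. By \ref{assump:est-erg}, $\tau_e$ is $\P$-preserving and ergodic, and by \ref{assump:inf-comp}, $\P(\Omega_0)>0$. Applying the Poincar\'e recurrence theorem to $\tau_{\pm e}$ with the positive-measure set $\Omega_0$, both the forward return time $n(\omega,e)$ and the backward return time $n^-(\omega):=\min\{j\ge 1:\tau_{-je}\omega\in\Omega_0\}$ are $\P_0$-a.s.\ finite, so $\sigma_e:\Omega_0\to\Omega_0$ is well-defined almost everywhere. (Hypothesis \ref{assump:exp-dec-dist-indshift} is not actually needed for this lemma, only finiteness of $n$.)

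\textit{Measure preservation.} For measurable $A\subset\Omega_0$, write $\sigma_e^{-1}A=\bigsqcup_{k\ge 1}A_k$ with $A_k:=\{n(\cdot,e)=k\}\cap\tau_{ke}^{-1}A$, and partition $A=\bigsqcup_{k\ge 1}(A\cap\{n^-=k\})$ mod a $\P$-null set. A direct check against the minimality definitions of $n$ and $n^-$ shows that $\omega\mapsto\tau_{-ke}\omega$ is a bijection from $A\cap\{n^-=k\}$ onto $A_k$, with inverse $\tau_{ke}$. Since $\tau_e$ is an invertible $\P$-preserving transformation, each such translation is measure preserving on $\mathcal{G}$, and summing over $k$ gives $\P(A)=\P(\sigma_e^{-1}A)$; dividing by $\P(\Omega_0)$ yields $\P_0$-invariance of $\sigma_e$.

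\textit{Ergodicity.} Given $\sigma_e$-invariant $A\subset\Omega_0$ (mod $\P_0$), form its forward saturation $\widehat A:=\bigcup_{j\ge 0}\tau_{je}^{-1}A\subset\Omega$. By construction $\tau_e^{-1}\widehat A\subset\widehat A$, and $\tau_e$-invariance of $\P$ forces equality mod $\P$, so ergodicity of $\tau_e$ from \ref{assump:est-erg} yields $\P(\widehat A)\in\{0,1\}$. It then suffices to identify $\widehat A\cap\Omega_0=A$ mod $\P_0$: the inclusion $\subseteq$ requires that, for $\omega\in\widehat A\cap\Omega_0$ with $\tau_{je}\omega\in A\subset\Omega_0$, the index $j$ coincide with one of the successive return times $0=k_0<k_1<\cdots$ of $\omega$ to $\Omega_0$, so that $\tau_{je}\omega=\sigma_e^i(\omega)$ for some $i$ and $\sigma_e$-invariance of $A$ gives $\omega\in A$. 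Consequently $\P_0(A)=\P(\widehat A\cap\Omega_0)/\P(\Omega_0)\in\{0,1\}$. The only nontrivial step is the two bijection/identification checks, but these are purely combinatorial manipulations of forward and backward hitting times of $\Omega_0$ under $\tau_e$ and reduce entirely to the minimality in the definitions of $n$ and $n^-$.
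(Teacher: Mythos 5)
Your argument is correct and takes essentially the same route as the paper: the paper first proves a general lemma (Lemma \ref{lemma 3.3}) that the first-return map of any invertible, ergodic, measure-preserving transformation to a positive-measure set is measure preserving and ergodic for the conditional measure, and then applies it with $T=\tau_e$ and $A=\Omega_0$. Your inline version uses the same three ingredients---Poincar\'e recurrence for well-definedness, a return-time decomposition for measure preservation, and a $\tau_e$-invariant saturation together with ergodicity of $\tau_e$ to get ergodicity of $\sigma_e$---so the content matches the paper's, just without factoring out the abstract lemma.
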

We defer the proof to Section \ref{sec-appendix-ergodic} and complete the proof of Proposition \ref{thm-ergodic}.

\begin{proof}[{\bf Proof of Proposition \ref{thm-ergodic}:}] 
We first show that $\mathbb{Q}$ is equivalent to $\P_0$. Let $A:=\{\phi>0\}$. We need to show that $\P_0(A)=1$. Since $\phi$ is a density, we know that $\P_0(A)>0$. As $\phi \d\P_0$ is invariant with respect to the environmental process, we have \begin{align*}
	0=\int_{A^c}\phi \d\P_0=\int \1_{A^c}\phi \d\P_0&=\int E^{b,\omega}(\1_{A^c}(\tau_{X_1}\omega))\phi(\omega)\d\P_0\\
	&=\int_{A} E^{b,\omega}(\1_{A^c}(\tau_{X_1}\omega))\phi(\omega)\d\P_0.\\
\end{align*}

Thus, for $\P_0(|A)$-a.s $\omega$,
$E^{b,\omega}(\1_{A^c}(\tau_{X_1}\omega))=0$. Equivalently, for $\P_0(|A)$-a.s
$\omega$, $E^{b,\omega}(\1_{A}(\tau_{X_1}\omega))=1$. In particular, for
$\P_0(|A)$-a.s $\omega$, $\1_{A}(\tau_{X_1}\omega)=1$ $Q^{b,\omega}$-a.s. We
claim that this implies that $A$ is $\P_0$-a.s. invariant under the induced
shift, so $\P_0(A)\in\{0,1\}$. Since $\P_0(A)>0$, the equivalence between
$\mathbb{Q}$ and $\P_0$ would be complete. To show the claim, notice that for
$\omega$ as above, $\tau_x\omega\in A$ for almost all
$x\in {\mathcal C_\infty}(\omega)$. Indeed, if there is a subset $V$ of
${\mathcal C_\infty}(\omega)$ of positive Lebesgue measure satisfying
$\tau_x\omega\notin A$ for $x\in V$, then since the diffusion visits every set
of positive Lebesgue measure inside ${\mathcal C_\infty}(\omega)$, we would
have $P^{b,\omega}(X_1\in V)>0$, so that
$Q^{b,\omega}(\tau_{X_1}\omega\notin A)>0$, which would be a contradiction.
Thus, for $\P_0$-a.s. $\omega\in A$ and almost all
$x\in {\mathcal C_\infty}(\omega)$, we have $\tau_x\omega\in A$. In other
words,
\begin{equation*}
	\int_{A}\int_{\R^d}\1_{\{x:\tau_x\omega\notin A\}}\d x\d\P_0=0.
\end{equation*}
By Fubini's theorem, $\int_{\R^d}\int_{A}\1_{A^c}(\tau_x\omega)\d\P_0\d x=0$.
Hence, for almost all $x\in \R^d$,\begin{equation*}
	\int_{A}\1_{A^c}(\tau_x\omega)\d\P_0=\frac{1}{\P(0\in {\mathcal C_\infty})}\int_{\Omega}\1_{A}(\omega)\1_{A^c}(\tau_x\omega)\d\P=0.
\end{equation*}
By the continuity of the map
$\R^d\ni y\mapsto \1_{A}(\omega)\1_{A^c}(\tau_x\omega)\in L^1(\P) $, we deduce
that for all $x\in \R^d, \1_{A}(\omega)\1_{A^c}(\tau_x\omega)=0$ $\P_0$-a.s. In
particular, $\P_0$-a.s., for all $x\in \mathbb{Q}^d$ we have
$\1_{A}(\omega)\1_{A^c}(\tau_x\omega)=0$. By definition of the induced shift
(see \ref{def-induced}), $n(\omega,e)\in \mathbb{Q}^d$ and we conclude that
$\P_0$-a.s., $\1_{A}(\omega)\1_{A^c}(\sigma_e(\omega))=0$. In other words, $A$
is invariant under the induced shift $\P_0$-a.s., which proves that
$\mathbb Q\sim \P_0$. The other two assertions follow from standard arguments. 
\end{proof}

The following consequences of Proposition \ref{thm-ergodic} are laws of large numbers for the
trajectory of the diffusion and the associated functional. These results will be used heavily in the proof of the lower bound namely for proving Theorem \ref{thm lower bound} in Section \ref{sec-lb}. 

\begin{cor}\label{lemma-ergodic}
  Fix $( b,\phi)\in \mathcal E$. Then $\P_0\times P_0^{b,\omega}$-a.s.,
\begin{equation}
  \label{eq:lln}
  \lim_{t\to\infty} \frac {X_t} t=
  \E_0\left[\phi(\omega)\left(\frac{1}{2} \mathrm{div} \;\! a
      (\omega)+a(\omega)b(\omega)\right)\right].\end{equation}
\end{cor}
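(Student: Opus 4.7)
The plan is to exploit the SDE satisfied by $(X_t)_{t\ge 0}$ with the stationary control $c(\cdot)=b(\tau_{X_\cdot}\omega)$ (cf.\ Section \ref{sec:env-process}) and to split
\[
X_t = M_t + A_t,\qquad M_t:=\int_0^t\sigma(X_s,\omega)\,\d B_s,\qquad A_t:=\int_0^t\!\big[(\mathrm{div}\!\; a)(X_s,\omega) + a(X_s,\omega)b(X_s,\omega)\big]\,\d s.
\]
I will treat the two pieces separately: $M_t/t\to 0$ holds $P_0^{b,\omega}$-almost surely for every fixed $\omega\in\Omega_0$, while $A_t/t$ converges $\P_0\times P_0^{b,\omega}$-a.s.\ to the right-hand side of \eqref{eq:lln}.

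For the martingale term, each component $M_t^{(i)}$ is a continuous local martingale with quadratic variation $\langle M^{(i)}\rangle_t=\int_0^t(\sigma\sigma^\top)_{ii}(X_s,\omega)\,\d s\le 2c_5\, t$ by the upper bound in \eqref{eq:ellip-bounds}. The strong law for continuous martingales with at most linearly growing quadratic variation (equivalently, Dambis--Dubins--Schwarz combined with the LIL for Brownian motion) gives $M_t/t\to 0$ almost surely. For the drift I rewrite $A_t$ in terms of the environment process $\bar\omega_s:=\tau_{X_s}\omega$ from \eqref{def-env-pr}: setting
\[
F(\omega) := \tfrac12\mathrm{div}\!\; a(\omega) + a(\omega)b(\omega),
\]
we have $A_t=\int_0^t F(\bar\omega_s)\,\d s$, so it suffices to prove
\[
\lim_{t\to\infty}\frac1t\int_0^t F(\bar\omega_s)\,\d s \;=\; \int_{\Omega_0}F\,\d\mathbb{Q} \;=\; \E_0\!\big[\phi\,F\big].
\]
This is Birkhoff's pointwise ergodic theorem for continuous-time Markov processes, applied to the ergodic invariant probability measure $\d\mathbb{Q}=\phi\,\d\P_0$ supplied by Proposition \ref{thm-ergodic}; the standard reduction to the discrete-time skeleton $(\bar\omega_n)_{n\in\N}$, combined with a comparison of $\sup_{s\in[n,n+1]}|F(\bar\omega_s)|$ to an integrable majorant, makes the passage from discrete to continuous time rigorous.

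The only input that needs genuine verification is the integrability $F\in L^1(\mathbb{Q})$: the first summand of $F$ is uniformly bounded by \ref{f1}, while the bound $|a(\omega)b(\omega)|^2=\langle b,a^2 b\rangle\le c_5\,\langle b,ab\rangle=c_5\|b\|_a^2$ (using $a\le c_5 \mathrm{Id}$) together with $b\in L^1_a(\phi\,\d\P_0)$ yields $\E_0[\phi\,|ab|]\le c_5^{1/2}\|b\|_{L^1_a(\phi\d\P_0)}<\infty$. I do not anticipate any genuinely hard step: the conditioning on $\Omega_0$ creates no obstruction because $\mathbb{Q}\sim\P_0$ by Proposition \ref{thm-ergodic}, so the $\mathbb{Q}$-null exceptional set produced by the ergodic theorem is automatically $\P_0$-null, and the martingale LLN is pathwise in $\omega$; combining the two convergences gives precisely \eqref{eq:lln}.
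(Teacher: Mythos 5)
Your proof follows the same route as the paper's: split $X_t$ into the martingale $M_t$ (which vanishes after dividing by $t$ because $\sigma$ is bounded) and the drift $A_t$, then apply Proposition~\ref{thm-ergodic} to average the drift along the environment process. The paper's proof is terser, but your extra steps — the $L^1(\mathbb{Q})$ check via $a^2\le c_5\,a$ and the remark that $\mathbb{Q}\sim\P_0$ converts the $\mathbb{Q}$-null exceptional set to a $\P_0$-null one — are exactly the details implicitly invoked there, so this is the same argument fleshed out, not a different one.
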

\begin{proof} 
By definition, $X_t$ satisfies 
\begin{equation}
 \label{eq:SDE.BC}
	X_t = \int_0^t \sigma(X_s)\,\d\mathcal{B}_s+ \int_0^t ( \frac12
        \mathrm{div} \;\! a  + ab)(X_s)\,\d s.
\end{equation}
Since $\sigma$ is bounded, the stochastic integral divided by $t$ goes to $0$ \
$\P_0{\times} P_{0}^{b,\omega}$-a.s. Moreover, Proposition \ref{thm-ergodic} yields 
\begin{equation}
  \label{eq:Drift.Aver}
	\lim_{t\to\infty}\frac{1}{t} \int_0^t  \Big( \frac12
        \mathrm{div} \;\! a  + ab\Big)(X_s)\,\d s 
  =\E_0\left[\Big(\frac{1}{2}\mathrm{div} \;\! a+ab\Big) \phi\right]\quad 
    \P_0{\times} P_{0}^{b,\omega}\text{-}a.s.
\end{equation}
This finishes the proof.
\end{proof}

The following immediate consequence of Proposition \ref{thm-ergodic} and
Corollary \ref{lemma-ergodic} will be used several times in the sequel:

\begin{cor}
\label{lemma2-ergodic}
Fix $(b,\phi)\in \mathcal{E}$. Then for $\P_0$-a.e.\ $\omega\in \Omega_0$ and
$P_0^{b,\omega}$-a.s. and in $L^1(P_0^{b,\omega})$, we have, uniformly on $[0,T]$, 
\begin{equation}\label{eq:ergodic-limits}
  \begin{aligned}
    &\lim_{\eps\to 0}\eps \int_0^{t/\eps}b(X_s,\omega)\d s=t\int \P_0(\d\omega)
    \phi(\omega) b(\omega), \\ 
    &\lim_{\eps\to 0}\eps \int_0^{t/\eps}L(X_s,b(X_s,\omega),\omega)\d s= t h
    (b,\phi),\quad \mbox{and} 
    \quad\lim_{\eps\to 0}\eps X_{t/\eps}=  t m(b,\phi), \qquad\mbox{where}\\
    &h(b,\phi):=
    \int \P_0(\d\omega) \phi(\omega) L(b(\omega),\omega), \quad\mbox{and}\quad  m(b,\phi):= \int\P_0(\d\omega)
    \phi(\omega) \Big(\frac{1}{2} \mathrm{div}(a(\omega))+b(\omega)\Big),
  \end{aligned}
\end{equation}

\end{cor}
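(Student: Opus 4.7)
The starting observation is the identity $b(X_s,\omega)=b(\tau_{X_s}\omega)=b(\overline\omega_s)$ and similarly $L(X_s,b(X_s,\omega),\omega)=\widetilde L(\overline\omega_s)$ with $\widetilde L(\eta):=L(b(\eta),\eta)$, so each of the three averages in \eqref{eq:ergodic-limits} is a time-average of a functional of the environment process $\overline\omega_s=\tau_{X_s}\omega$ defined in \eqref{def-env-pr}. Proposition~\ref{thm-ergodic} provides a (unique) invariant and ergodic probability measure $\mathbb{Q}:=\phi\,\d\P_0$ for $\overline\omega$, with $\mathbb{Q}\sim\P_0$. Applying the continuous-time Birkhoff ergodic theorem for Markov processes gives, for any $F\in L^1(\mathbb{Q})$, that
\begin{equation*}
\lim_{T\to\infty}\frac{1}{T}\int_0^T F(\overline\omega_s)\,\d s=\int F\,\phi\,\d\P_0\qquad Q^{b,\omega}\text{-a.s.\ and in }L^1(Q^{b,\omega}),
\end{equation*}
for $\P_0$-a.e.\ $\omega$ (thanks to $\mathbb{Q}\sim\P_0$). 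Substituting $T=t/\eps$ and multiplying by $t$ yields (i) and (ii) pointwise in $t>0$ with $F=b$ and $F=\widetilde L$, respectively, while (iii) reduces, for each fixed $t$, to Corollary~\ref{lemma-ergodic}.

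\textbf{Uniform convergence in $t\in[0,T]$.} This is the main upgrade beyond the pointwise-in-$t$ version of Corollary~\ref{lemma-ergodic}. For $F\in L^\infty(\mathbb{Q})$ with $\|F\|_\infty\leq M$, the prelimit $t\mapsto \eps\int_0^{t/\eps}F(\overline\omega_s)\d s$ is $M$-Lipschitz in $t$ \emph{uniformly in $\eps$}, and the limit $t\mapsto t\int F\,\d\mathbb{Q}$ is linear; so pointwise convergence on a dense subset of $[0,T]$ together with Arzel\`a--Ascoli gives uniform convergence. For unbounded $F\in L^1(\mathbb{Q})$ I would truncate $F=F_M+(F-F_M)$ with $F_M:=F\mathbf{1}_{\{|F|\leq M\}}$ and estimate the tail by
\begin{equation*}
\sup_{t\in[0,T]}\Big|\eps\!\int_0^{t/\eps}(F{-}F_M)(\overline\omega_s)\,\d s\Big|\leq\eps\!\int_0^{T/\eps}|F{-}F_M|(\overline\omega_s)\,\d s\xrightarrow[\eps\to 0]{}T\!\int|F{-}F_M|\,\d\mathbb{Q},
\end{equation*}
by the ergodic theorem applied to $|F-F_M|\in L^1(\mathbb{Q})$, and the right-hand side is made arbitrarily small by choosing $M$ large (dominated convergence). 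This proves the uniform-in-$t$ version of (i) and (ii).

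\textbf{The third limit and the martingale.} From \eqref{eq:SDE.BC},
\begin{equation*}
\eps X_{t/\eps}=\eps\!\int_0^{t/\eps}\sigma(X_s)\,\d B_s+\eps\!\int_0^{t/\eps}\Big(\tfrac{1}{2}\mathrm{div}\,a+ab\Big)(X_s)\,\d s.
\end{equation*}
The drift term is handled by the uniform ergodic theorem above, noting that $\tfrac12\mathrm{div}\,a+ab\in L^1(\mathbb{Q})$ (the divergence is uniformly bounded by \ref{f1} and $|ab|\leq\sqrt{c_5}\,\|b\|_a\in L^1(\mathbb{Q})$ since $(b,\phi)\in\mathcal{E}$). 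For the martingale part, Doob's $L^2$-inequality and It\^o's isometry give
\begin{equation*}
E^{P_0^{b,\omega}}\!\Big[\sup_{t\in[0,T]}\Big|\eps\!\int_0^{t/\eps}\sigma(X_s)\,\d B_s\Big|^2\Big]\leq 4\eps^2\,E^{P_0^{b,\omega}}\!\Big[\!\int_0^{T/\eps}\|\sigma(X_s)\|^2\d s\Big]\leq 4\|\sigma\|_\infty^2\,\eps\,T,
\end{equation*}
which vanishes as $\eps\to 0$, yielding uniform-on-$[0,T]$ convergence in $L^2$ and (via Borel--Cantelli along the sequence $\eps_n=1/n$) almost surely.

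\textbf{$L^1$ convergence and main obstacle.} The $L^1(P_0^{b,\omega})$ convergence follows from the a.s.\ convergence together with uniform integrability of the suprema, which is supplied by the $L^2$-bound on the martingale and by the $L^1(\mathbb{Q})$-control on the drift integrands (started from $\omega$ after averaging against the equivalent measure $\mathbb{Q}$). The principal technical obstacle is the interplay between the \emph{unboundedness} of $b$ and $\widetilde L$ (we have only the seminorm-integrability $\|b\|_a\in L^1(\mathbb{Q})$) and the requirement of \emph{uniform-in-$t$} control; the truncation-and-tail argument above, which leverages the ergodic theorem for $|F-F_M|$ on the full horizon $[0,T/\eps]$, is what circumvents this issue.
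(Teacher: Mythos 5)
The paper gives no explicit proof of this corollary; it simply says it is an ``immediate consequence of Proposition \ref{thm-ergodic} and Corollary \ref{lemma-ergodic},'' and your write-up supplies exactly the details that the authors left implicit, using the same ingredients (ergodicity of $\mathbb Q=\phi\,\d\P_0$ for the environment process, the SDE decomposition, and the martingale estimate). The identification of the integrands as functionals of $\overline\omega_s$, the truncation argument for uniform-in-$t$ convergence, and the Doob/It\^o bound for the martingale part are all the right moves. You are also implicitly relying on $b\in L^1(\mathbb Q)$ and $L(b(\cdot),\cdot)\in L^1(\mathbb Q)$, but this is the same implicit assumption built into the statement of the corollary (and used everywhere the paper invokes $m(b,\phi)$ and $h(b,\phi)$), so it is not a deviation on your part.

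There is one concrete slip in the martingale step. Your Doob bound gives
\begin{equation*}
E^{P_0^{b,\omega}}\Big[\sup_{t\in[0,T]}\big|\eps M_{t/\eps}\big|^2\Big]\leq 4\|\sigma\|_\infty^2\,\eps\,T,
\end{equation*}
so Chebyshev yields $P\big(\sup_t|\eps_n M_{t/\eps_n}|>\delta\big)=O(\eps_n)$. Along $\eps_n=1/n$ this is $O(1/n)$, which is \emph{not} summable, so Borel--Cantelli does not apply as written. The fix is routine: take a geometrically decaying sequence such as $\eps_n=2^{-n}$ (for which $\sum_n \eps_n<\infty$), obtain a.s.\ uniform convergence along that subsequence, and then interpolate: for $\eps\in[\eps_{n+1},\eps_n]$ one has $\eps\sup_{t\le T}|M_{t/\eps}|\le 2\,\eps_{n+1}\sup_{s\le T/\eps_{n+1}}|M_s|$, so the subsequential result transfers to the full family $\eps\to 0$. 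Alternatively, invoke the strong law of large numbers for continuous martingales (or the law of the iterated logarithm) directly, which gives $M_{s}=o(s)$ a.s.\ and hence $\eps\sup_{t\le T}|M_{t/\eps}|\to 0$ without any subsequence bookkeeping. Also, a purely cosmetic remark: ``Arzel\`a--Ascoli'' only gives subsequential uniform limits; to get convergence of the full family you still need the observation that every subsequential uniform limit must equal the already-known pointwise limit $t\mapsto t\int F\,\d\mathbb Q$, or you can skip Arzel\`a--Ascoli altogether and run the elementary finite $\delta$-net argument using the uniform $M$-Lipschitz bound. Neither point changes the overall correctness of your proof.
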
\qed

\subsubsection{Proof of Proposition \ref{prop 1}.}\label{sec-appendix-ergodic} 
 Proposition \ref{prop 1} is a consequence of the following known result from ergodic theory (see e.g. \cite{P89,BB07}). 
\begin{lemma}
\label{lemma 3.3}
Let $(X,\mathscr F,\mu)$ be a probability space and let $T:X\to X$ be
invertible, measure preserving and ergodic with respect to $\mu$. Let
$A\in\mathscr F$ with $\mu(A)>0$. If $n:A\rightarrow \N\cup\{\infty\}$ is
defined by
$$
	n(x)=\min\{k>0:T^k(x)\in A\}
$$
and $S:A\rightarrow A$ by $S(x)=T^{n(x)}(x)$ for $x\in A$, then $S$ is measure
preserving and ergodic with respect to $\mu(\cdot|A)$ and almost surely
invertible with respect to the same measure.
\end{lemma}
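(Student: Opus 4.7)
My plan is to prove this as the classical Kakutani induced transformation theorem, built around a tower decomposition of $X$. First I would invoke the Poincar\'e recurrence theorem to conclude that the return time $n$ is finite $\mu$-a.e.\ on $A$: indeed, by ergodicity of $T$ and $\mu(A)>0$, the set $\{x\in A: T^k x\notin A\ \forall k\ge 1\}$ has $\mu$-measure zero. Then I would partition $A$ into the measurable level sets $A_k:=\{x\in A: n(x)=k\}$ and, crucially, form the \emph{Kakutani tower}
\[
  X=\bigsqcup_{k\ge 1}\bigsqcup_{j=0}^{k-1} T^{j}(A_k)\quad\text{(mod $\mu$-null sets)}.
\]
Disjointness of the pieces follows from the minimality in the definition of $n$, and the union covers $X$ modulo a null set by recurrence. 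This decomposition is the workhorse for the rest of the argument.

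For measure preservation, I would observe that the only way $T$ can send a point into $A$ is from the top of some column: $T^{-1}(B)\subset\bigcup_k T^{k-1}(A_k)$ for any $B\subset A$. Since $T$ is invertible and $\mu$-preserving, the bijection $T^{-(k-1)}:T^{k-1}(A_k)\to A_k$ is measure-preserving and pushes $T^{-1}(B)\cap T^{k-1}(A_k)$ to $S^{-1}(B)\cap A_k$. Summing over $k$ yields $\mu(B)=\mu(T^{-1}B)=\sum_k \mu(S^{-1}(B)\cap A_k)=\mu(S^{-1}B)$, so $S$ preserves $\mu|_A$, hence $\mu(\cdot\mid A)$ after normalization.

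For ergodicity, suppose $B\subset A$ satisfies $S^{-1}B=B$ mod $\mu$. The idea is to rebuild a $T$-invariant set out of $B$ by extending it through each tower,
\[
  \widetilde B:=\bigsqcup_{k\ge 1}\bigsqcup_{j=0}^{k-1} T^{j}(B\cap A_k),
\]
and verify $T^{-1}\widetilde B=\widetilde B$ mod $\mu$: for $j<k-1$ the step is automatic, and at the top ($j=k-1$) it is exactly the $S$-invariance of $B$. By ergodicity of $T$, $\mu(\widetilde B)\in\{0,\mu(X)\}$. Using $T$-invariance of $\mu$ one computes $\mu(\widetilde B)=\sum_k k\,\mu(B\cap A_k)$ and $\mu(X)=\sum_k k\,\mu(A_k)$; comparing these, together with the pointwise bound $\mu(B\cap A_k)\le\mu(A_k)$, forces either $\mu(B\cap A_k)=0$ for all $k$ or $\mu(B\cap A_k)=\mu(A_k)$ for all $k$, i.e.\ $\mu(B\mid A)\in\{0,1\}$.

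Finally, for a.s.\ invertibility, I would apply the identical Kakutani construction to $T^{-1}$ (which is also invertible, $\mu$-preserving and ergodic) to produce a backward return map $S^{-}(x):=T^{-n^{-}(x)}(x)$ with $n^{-}(x):=\min\{k\ge 1:T^{-k}x\in A\}$, finite $\mu$-a.e.\ by the same recurrence argument. A short check shows $S\circ S^{-}=S^{-}\circ S=\mathrm{id}$ $\mu$-a.e.\ on $A$: starting from $x\in A$, the orbit segment $x,Tx,\ldots,T^{n(x)}x=S(x)$ visits $A$ only at its endpoints, so running $T^{-1}$ from $S(x)$ first returns to $A$ exactly at $x$. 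The subtlest point — and the one requiring the most care — is maintaining the mod-null bookkeeping throughout, since the tower decomposition and the identities $S\circ S^{-}=\mathrm{id}$ hold only almost surely; this is routine but needs consistent handling of the exceptional null sets arising from recurrence.
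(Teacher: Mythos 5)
Your proof is correct, and it follows the classical Kakutani--Rokhlin tower route, which is structurally different from the paper's more direct argument. The key distinctions: for ergodicity, you build the \emph{subtower} $\widetilde B=\bigsqcup_{k\ge1}\bigsqcup_{j=0}^{k-1}T^j(B\cap A_k)$ over a candidate $S$-invariant set $B$ and compare $\mu(\widetilde B)=\sum_k k\,\mu(B\cap A_k)$ with $\mu(X)=\sum_k k\,\mu(A_k)$; the paper instead forms $C=\bigcup_{k\ge1}T^k(B)$, observes it is $T$-invariant and sandwiched as $B\subseteq C\subseteq (X\setminus A)\cup B$, and reads off $\mu(B)\in\{0,\mu(A)\}$ from the dichotomy $\mu(C)\in\{0,1\}$. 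Your approach buys a cleaner ``all-or-nothing at each level'' conclusion at the cost of first establishing the full tower decomposition of $X$ (which requires the ergodicity argument that $\bigcup_{j\ge0}T^j(A)$ has full measure — a point worth stating explicitly rather than burying in ``(mod $\mu$-null sets)''); the paper's route avoids the tower entirely and is shorter. For measure preservation, you push $T^{-1}(B)$ down columns via $T^{-(k-1)}$, whereas the paper argues more directly that the images $S(A_i)$ are pairwise disjoint and that $S$ restricted to $A_j$ is $T^j$. For a.s.\ invertibility, you construct the backward induced map $S^-$ and verify $S\circ S^-=S^-\circ S=\mathrm{id}$ a.e., while the paper simply observes that invertibility of $T$ forces $S^{-1}(\{x\})$ to be a singleton on the set where $S$ is defined; your version is more explicit but the two are equivalent. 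All three of your arguments are sound; the mod-null bookkeeping you flag as delicate is indeed the only place requiring care, and you handle it correctly.
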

\begin{proof}
We first prove that $S$ is measure preserving. By the Poincar\'{e} Theorem,
$n(x)<\infty$ almost surely. For any $j\geq 1$ we define
$A_j=\{x\in A:n(x)=j\}$. By definition, the $A_j$ are disjoint and as
$n(x)<\infty$ almost surely, $\mu(A\setminus\cup_{j\geq 1}A_j)=0$. As the
restriction of $S$ to $A_j$ is $T^j$ and since $T^j$ is measure preserving, $S$
is measure preserving on $A_j$. We claim that $S(A_i)\cap
S(A_j)=\emptyset$. This, together with the fact that $S$ is measure preserving
on $A_j$, proves that $S$ is measure preserving on the disjoint union
$\cup_{j\geq 1}A_j$ and therefore on $A$.

Thus, we only owe the claim $S(A_i)\cap S(A_j)=\emptyset$. We assume that there
exists $x\in S(A_i)\cap S(A_j)$ for $1\leq i<j$. This requires the existence of
$y,z\in A$ with $n(y)=i$, $n(z)=j$ and $x=T^i(y)=T^j(z)$. As $T$ is invertible,
$y=T^{j-i}(z)$. Thus, $n(z)\leq j-i<j$, which is a contradiction to $n(z)=j$
and the desired claim follows.

Next, we note that $T$ is invertible. Thus, $S$ is almost surely invertible, as
the intersection $S^{-1}(\{x\})\cap \{S\text{ is well defined}\}$ is a
one-point set.

We finally want to show that $S$ is ergodic. Let $B\in \mathscr F$ such that
$B\subseteq A$ is $S$-invariant. Then if $x\in B$ and $n\geq 1$, it follows
that $S^n(x)\notin A\setminus B$. This implies that for any $x\in B$ and
$k\geq 1$, if $T^k(x)\in B$, then $T^k(x)\notin A\setminus B$. We conclude that
$C=\cup_{k\geq 1}T^k(B)$ is $T$-invariant and
$B\subseteq C\subseteq (X\setminus A)\cup B$. In particular,
$\mu(B)\leq \mu(C)\leq 1+\mu(B)-\mu(A)$. Therefore, ergodicity of $T$ implies
$\mu(C)\in\{0,1\}$, which forces $\mu(B)\in\{0,\mu(A)\}$ and thus, the
ergodicity of $S$ with respect to $\mu(\cdot|A)$.
\end{proof}

\noindent\textbf{Proof of Proposition \ref{prop 1}.}
The shift $\tau_e$ is invertible, measure preserving and ergodic with respect
to ${\P}$. It follows from Lemma \ref{lemma 3.3} that the induced shift
$\sigma_e$ is ${\P}_0$-preserving, almost surely invertible and ergodic with
respect to ${\P}_0$.  \qed

\section{Lower Bounds.}
\label{sec-lb}

The goal of this section is to prove Theorem \ref{thm lower bound} in Section \ref{subsec proof thm} showing the lower bound of Theorem \ref{thm}. Its proof will be based on Lemmas \ref{lemma:control_restriction}- \ref{lemma2} established in Section \ref{subsec proof thm}. First we need some preliminaries. 
 \subsection{Preliminaries}\label{subsec prelim lb} 
We recall from Section \ref{sec-main-results} the definition of the space $\mathbf{C}_T$ of
progressively measurable functions
$c: [0,T]\times \mathscr X \mapsto
\R^d$ with $ E^P[\int_0^T |c(s)|^2 \d s]<\infty$. For every $a$ satisfying \ref{f1} and $c\in \mathcal C_T$ and for every fixed $\omega\in \Omega_0$ we have the controlled SDE $X_t= x+ \int_0^t \sigma(X_s) \d B_s+ \int_0^t (\mathrm{div}a)(X_s) \d s + \int_0^t a(X_s) c(s) \d s$ which admits a unique strong solution (\cite[Theorem 2.1]{BMM26}).

For any fixed $\omega\in \Omega_0$, the law of the diffusion $X_\cdot$ starting at $x\in \R^d$ is denoted by $P^{c,\omega}_x$. Recall from \ref{f1} that for every $x\in \R^d$ and $\omega\in \Omega_0$, we have $\sigma(x,\omega)=\sigma(\tau_x\omega)$, $a(x,\omega)=a(\tau_x\omega)$ and $(\mathrm{div} \, a)(x,\omega)= (\mathrm{div} \,a)(\tau_x\omega)$. Then for any $y\in \R^d$, the map $\hat\tau_y: C([0,T],\R^d) \to C([0,T],\R^d)$ with $X_\cdot \mapsto X_\cdot + y$ satisfies 
\begin{equation}\label{shift1}
P^{c,\omega}_{x+y}= P^{c,\tau_y\omega}_x \, \hat\tau_y^{-1}\qquad \forall\,\,\omega\in \Omega_0, \,\, x,y \in \R^d,\,\, c\in \mathbf C_T.
\end{equation}

Next, we recall 
from \eqref{eq:cost-fun-def}
 
\begin{equation}  
\begin{aligned}
&u(t,x,\omega):=\sup_{c\in \mathbf{C}_T} J^\omega_f(t,x,c)=	\sup_{c\in \mathbf{C}_T}E^{P_x^{c,\omega}}\bigg[f(X_t)-\int_0^t L(X_s,c(s))\d s\bigg ] \label{eq:visc-sol-var-form 2}
\end{aligned}
\end{equation}

 We note that \eqref{shift1} implies the following simple identity. For any $y\in \R^d$, let $f^y(\cdot)= f(\cdot + y)$ be the $y$-translate of $f(\cdot)$ and let $u^y(t,x,\omega)$ be defined as in $u(t,x,\omega)$ above with initial condition $f^y(\cdot)$ replacing $f(\cdot)$. Since $L(y,q,\omega)=L(q,\tau_y\omega)$ (recall \ref{f3}), then  
\begin{equation}\label{shift2}
\begin{aligned}
u^y(t,x,\tau_y\omega)&= \sup_{c\in \mathbf C_T} E^{P_x^{c,\tau_y\omega}}\bigg[f(y+X_t)-\int_0^t L(X_s,c(s),\tau_y \omega)\d s\bigg ]\\
&=\sup_{c\in \mathbf C_T} E^{P_x^{c,\tau_y\omega}}\bigg[f(y+X_t)-\int_0^t L(c(s),\tau_{X_s}\tau_y\omega)\d s\bigg ] \\
&= \sup_{c\in \mathbf C_T} E^{P_{x+y}^{c,\omega}}\bigg[f(X_t)-\int_0^t L(c(s),\tau_{X_s}\omega)\d s\bigg ] \\
&= \sup_{c\in \mathbf C_T} E^{P_{x+y}^{c,\omega}}\bigg[f(X_t)-\int_0^t L(X_s, c(s),\omega)\d s\bigg ] 
= u(t,x+y,\omega)
\end{aligned}
\end{equation}
where we used \eqref{shift1} in the third identity above. Next we recall the rescaled version of $u(t,x,\omega)$ from \eqref{eq:u-eps-def}: For any $\eps>0$,  $f_\eps(x):=\frac{1}{\eps} f(\eps x)$ and for any $\omega\in \Omega_0$ 
\begin{equation}\label{eq:u-eps}
\begin{aligned}
	u_{\eps}(t,x,\omega) &:= \sup_{c\in \mathbf{C}_T} \eps J^\omega_{f_\eps}\bigg(\frac t\eps, \frac x \eps, c\bigg)
    =\sup_{c\in \mathbf C_T} E^{P^{c,\omega}_{x/\eps}}\bigg[f(\eps X_{t/\eps})-\eps\int_0^{t/\eps}L(X_s,c(s))\d s\bigg]
\end{aligned}
\end{equation}
An alternative representation of the above expression is given by 

\begin{equation}\label{eq:u-eps-2} u_\eps(t,x,\omega)=\sup_{c\in \mathbf
    C_T} E^{P_{x}^{\eps,c,\omega}}\bigg[f(
  Y_{t})-\int_0^{t}L\bigg(\frac{1}{\eps}Y_s,c\big(\frac{s}{\eps}\big)\bigg)\d
  s\bigg],
\end{equation}
where $P_x^{\eps,c,\omega}$ is the law of the diffusion satisfying the rescaled SDE 
\begin{equation}
  \label{eq:sde-rescaled}
  Y_t=x + \sqrt{\eps} \int_0^t\sigma\left(\frac1\eps Y_s\right) \d{B}_s 
  +\int_0^t(\mathrm{div}\;\! a )\left(\frac1\eps Y_s\right)\d
  s+\int_0^ta\left(\frac1\eps Y_s\right)c\left(\frac{s}{\eps}\right)\d s. 
\end{equation}

\subsection{A priori estimates}
\label{subsec proof thm}

In this section, we use constants $C,C''$ independent of $\omega,t,\eps$ that
may change from line to line. Here using Lemma \ref{lemma:control_restriction}- Lemma \ref{lemma2} below, in Theorem \ref{thm lower bound} we will prove the lower bound of Theorem \ref{thm}.   

 The first lemma below shows that it is sufficient to restrict the control set
$\mathbf C_T$ to a smaller set $\mathbf C^*_T$, which follows from
comparing supremum property with the special choice $c \equiv 0$. 

\begin{lemma}
\label{lemma:control_restriction}
Assume \ref{assump:est-erg}-\ref{assump:inf-comp}, $\ref{f1}, \ref{f2}$- \ref{f4}. Then we can replace the supremum of
$c\in \mathbf C_T$ in \eqref{eq:u-eps-2} by a supremum over
$c\in \mathbf C^*_T\subset \mathbf C_T$ of functions satisfying the following:
for each $\delta>0$, there exists a constant $C_\delta$ depending only on
$\delta$ and the constants $\alpha, \alpha^\prime$ appearing in \ref{f2} and
\ref{f4} such that for all $\omega \in \Omega_0$, 
\begin{equation}
  \label{eq:control_restriction_eq1} \sup_{x\in
    \eps \mathcal{C}_\infty}\eps
  E^{P_{x/\eps}^{c,\omega}}\bigg[ \int_0^{t/\eps} 
  L(X_s,c(s)) \d s  \bigg]\leq C_\delta(t+\sqrt{\eps
    t})+2\alpha\delta.
\end{equation}
In particular, for all $c\in \mathbf C^*_T$,
\begin{equation}
\label{eq:control_restriction_eq2} 
\sup_{x \in
    \eps \mathcal{C}_\infty}\eps E^{P_{x/\eps}^{c,\omega}}\bigg[\int_0^{t/\eps} \big\|
c(s)\big\|_a ^{\alpha'}\d s\bigg]\leq C_\delta(t+\sqrt{\eps t})+2\alpha\delta.
\end{equation}
\end{lemma}
\begin{proof} 
First, we remark that \eqref{eq:control_restriction_eq1} implies
\eqref{eq:control_restriction_eq2} by using \eqref{eq:H1-L} giving 
\begin{align}
 \label{eq:ineq_c_norm2} 
  \eps E^{P_{x/\eps}^{c,\omega}}\bigg[\int_0^{t/\eps}\|c(s)\|_a^{\alpha'} 
  \, \d s\bigg]&\leq \frac1{c_{10}}\bigg(\eps
  E^{P_{x/\eps}^{c,\omega}}\bigg[\int_0^{t/\eps} L(X_s,c(s)) 
 \: \d s\bigg]+c_{11}t\bigg)=: \frac1{c_{10}} \Theta(t).
\end{align}
Thus,  we only need to prove \eqref{eq:control_restriction_eq1}. 

 The general strategy is the following: Denoting the expectation under
$\sup_{c\in \mathbf C_T}$ in \eqref{eq:u-eps} by $J_\eps(t,x,\omega, c)$, the
choice $c\equiv 0$ gives the estimate
$u_\eps(t,x,\omega) \geq J_\eps(t,x,\omega, 0)$. Hence, it is sufficient to
shrink the set $\mathbf C_T$ to a set $\mathbf C_T^*$ containing all functions
in $\mathbf C_T$ satisfying $J_\eps(t,x,\omega, c) \geq J_\eps(t,x,\omega,
0)$. This restriction will lead to condition
\eqref{eq:control_restriction_eq1}.

To provide a lower bound on $J_\eps(t,x,\omega,0)$ we next derive a control for
the expectation of $\eps X_{t/\eps}-x$ under the law $P_{x/\eps}^{c,\omega}$,
corresponding to  the diffusion
\begin{equation}
  \label{eq:diff_repr} 
  \eps X_{t/\eps} = x +
  \eps\int_0^{t/\eps} \sigma(X_s)\d{B}_s+ \eps\int_0^{t/\eps} (\mathrm{div}\;\!
  a)(X_s)\d s+\eps\int_0^{t/\eps} a(X_s)c(s)\d s. 
\end{equation}
We will now use the upper bound \eqref{eq:ellip-bounds} from \ref{f1} to deduce that 
\begin{equation*}
 	|a(X_s)c(s)|\leq C|\sigma(X_s)c(s)|=C\|c(s)\|_a.
\end{equation*}
Note that the norm above implicitly depends on $X_s$.  Also using \ref{f1} we
have uniformly $|\mathrm{div}\;\! a|\leq C^\prime$ for some
$C^\prime<\infty$. Using these two bounds,
\begin{equation*}
  E^{P_{x/\eps}^{c,\omega}}\big[|\eps X_{t/\eps}{-}x| \big]\leq \eps
  E^{P_{x/\eps}^{c,\omega}}\bigg[\bigg(\int_0^{t/\eps} \sigma(X_s)\d
  {B}_s\bigg)^2\bigg]^{1/2}+C^\prime t+\eps C
  E^{P_{x/\eps}^{c,\omega}}\bigg[\int_0^{t/\eps}\|c(s)\|_a\d s\bigg]. 
\end{equation*}
Using It\^o isometry, followed by employing the upper bound from
\eqref{eq:ellip-bounds}, we have  
\begin{equation}\label{Ito}
E^{P_{x/\eps}^{c,\omega}} \bigg[\bigg(\int_0^{t/\eps} \sigma(X_s)\d
{B}_s\bigg)^2\bigg] \leq C^{\prime\prime} t/\eps, \quad\mbox{hence,}
\end{equation}
 \begin{equation}
\label{eq:norm_X-t-bound2}
E^{P_{x/\eps}^{c,\omega}} \big[ |\eps X_{t/\eps}{-}x| \big]  \leq  
C\big(t+ \sqrt{\eps t} \big)+\eps
CE^{P_{x/\eps}^{c,\omega}}\bigg[\int_0^{t/\eps}\|c(s)\|_a\d s\bigg]. 
\end{equation}
	
Using the formula \eqref{eq:u-eps} with $c\equiv 0$, by 
\eqref{eq:H6} and \eqref{eq:norm_X-t-bound2}, for any $\delta>0$ we obtain the
lower  bound 
\begin{equation}\label{eq:u-eps_trivial_lower_bound}
  \begin{aligned}
   J_\eps(t,x,\omega,0)&= f(x) + 
   E^{P_{x/\eps}^{0,\omega}}\bigg[f(\eps
    X_{t/\eps})-f(x) -\eps \int_0^{t/\eps} \!\!L\left(X_s,0\right)\d s\bigg]\\
    &\geq f(x) -K_\delta E^{P_x^{\eps,0,\omega}} \big[|X_t{-}x|\big] - \delta - \eps
    c_{13}\frac t\eps\\ 
    &\geq f(x) -K_{\delta} C\big( t + \sqrt{\eps t}\big)-c_{13}t-\delta.
  \end{aligned}
\end{equation}
	
Finally, we use $-J_\eps(t,x,\omega,c) \leq - J_\eps(t,x,\omega,0)$ to derive
\eqref{eq:control_restriction_eq1}. Inserting \eqref{eq:u-eps} and using the
bound on $J_\eps(t,x,\omega,0)$ leads to  
\begin{align*}
  \eps E^{P_{x/\eps}^{c,\omega}}\bigg[\int_0^{t/\eps}
  L\big(X_s,c(s)\big)\d s \bigg] 
  &\leq  E^{P_{x/\eps}^{c,\omega}}\big[ f(\eps X_{t/\eps}){-}f(x) \big] +
  C_{\delta}(\sqrt{\eps t}+ t)+ c_{13}t+\delta,\\
  &\leq  K_\delta E^{P_{x/\eps}^{c,\omega}}\Big[ \big|\eps X_{t/\eps}{-}x\big|) \Big] +
  C K_{\delta}(\sqrt{\eps t}+ t)+ c_{13}t+2\delta
\\
& \leq  \eps C E^{P_{x/\eps}^{c,\omega}}\bigg[\int_0^{t/\eps}\!\|c\|_a \:\d s \bigg]+
  C_{\delta}(\sqrt{\eps t}+ t) +2\delta ,
\end{align*}
where we used \eqref{eq:H6} for the second estimate and
\eqref{eq:norm_X-t-bound2} for the last. By H\"{o}lder's inequality and  \eqref{eq:ineq_c_norm2}, 
\begin{align}
 \label{eq:ineq_c_norm}
  \eps E^{P_{x/\eps}^{c,\omega}}\bigg[\int_0^{t/\eps}\|c(s)\|_a\d s\bigg]&\leq
  t^{1/\alpha}\bigg(\eps
  E^{P_{x/\eps}^{c,\omega}}\bigg[\int_0^{t/\eps}\|c(s)\|_a^{\alpha'}\d
  s\bigg]\bigg)^{1/\alpha'}  \leq \frac{t^{1/\alpha}}{c_{10}^{1/\alpha'}} 
 \Theta(t)^{1/\alpha'}, 
\end{align}
 where $\Theta(t)$ is defined in \eqref{eq:ineq_c_norm2} in terms of $\int_0^{t/\eps}
  L\big(X_s,c(s)\big)\d s$. Together, we have 
\[
  \Theta(t) \leq C t^{1/\alpha} \Theta(t)^{1/\alpha'} + C'_\delta\big(
  \sqrt{\eps t} + t\big) + 2 \delta \leq \frac12\Theta(t) + C't^1 +
  C'_\delta\big( \sqrt{\eps t} + t\big) + 2 \delta ,
\]
where we used Young's inequality with $1/\alpha + 1/\alpha'=1$. Thus,
$\Theta(t)\leq 2(C'_\delta{+}C)\big( \sqrt{\eps t} + t\big) + 4
\delta$. Replacing $\delta$ by $\delta/4$ gives the assertiion
\eqref{eq:control_restriction_eq1} with $C_\delta:= 2(C'_{\delta/4}{+}C)$.

\end{proof}

\begin{lemma}
\label{lemma:lower-bound-lemma1}
Assume \ref{assump:est-erg}-\ref{assump:inf-comp}, \ref{f1}, \ref{f2} and \ref{f4} and fix $(b,\phi)\in \mathcal{E}$ (recall the definition of $\mathcal E$ from \eqref{class-E2}). Then, for 
all $\eta>0$ there exists $N_\eta\subset \Omega_0$ with $\P_0(N_\eta) 
 > 1-\eta$ such that for all $T>0$ we have  
\begin{equation*}
 \liminf_{\eps\to 0}\inf_{\omega\in N_\eta}\inf_{0\leq t\leq T} \Big( 
  u_\eps(t,0,\omega)- \big(f(m(b,\phi) t)-th(b,\phi) \big) \Big)\geq 0,
\end{equation*}
where $h(b,\phi)$ and $m(b,\phi)$ are defined in \eqref{eq:ergodic-limits}.
\end{lemma}
\begin{proof}
We fix $(b,\phi)\in \mathcal E$. Then from \eqref{eq:u-eps}, for any $\eps>0$, $t \in [0,T]$ and for $\P_0$a-e. $\omega \in \Omega_0$, 
\begin{equation}\label{lemma4.2 eq1}
u_\eps(t,0,\omega)= \sup_{c \in \mathbf C_T} E^{P^{c,\omega}_0}\bigg[ f\big(\eps X_{t/\eps}\big) - \eps \int_0^{t/\eps} L(X_s, c(s)) \d s\bigg]
\end{equation}
Now the supremum over $c\in \mathbf C_T$ is chosen for a percolation configuration $\omega \in \Omega_0$. So we can and will choose $c$ to depend on $\omega$. Moreover, the dependence on $s$ will occur only through $X_s$, which is progressively measurable.\footnote{We recall that any $c\in \mathbf C_T$ which is adapted to the natural filtration of the associated process $(X_t)_t$ is called a {\it feedback control}. Moreover, a process $c\in \mathbf C_T$ which can be written in the form $c(s)=\tilde c(s,X_s)$ for a suitable measurable map $\tilde c :[0,T] \times \R^d \to \R^d$ is called a {\it Markovian control}. Note that any Markovian control is also a feedback control, see \cite[Remark 3.1]{T13}.} Indeed, given any $(b,\phi) \in \mathcal E$, we consider the 
Markovian control 
\begin{equation}
 \label{Markovian control2}
c(s,\omega)=  \hat c_b(s,\omega) :=  
 b(\tau_{X_s}\omega).
\end{equation}
 To see that $ \hat c_b$ lies in the control set $\mathbf C_T$
it remains to check the fact that, for each $T>0$ and $\omega$ there is a constant $C(\omega,T)<\infty$ such that
\begin{equation}
  \label{starstar}
E^{P^{b,\omega}_0}\bigg[\int_0^T |b(\tau_{X_s}\omega)|^2 \d s \bigg]= C(\omega,T)<\infty.
\end{equation}
To verify the above bound, we recall from \eqref{eq:SDE.beta} that $P^{b,\omega}_0$ refers to the law of the diffusion 
\begin{align}
    \label{eq:Xt.diffusion}
X_t=\int_0^t \sigma(X_s) \d B_s + \int_0^t \mathrm{div}(a(X_s)) \d s + \int_0^t a(X_s) b(X_s) \d s. 
\end{align}
Since $x \mapsto b(x,\omega)$ is Lipschitz (recall \eqref{class-E2}), we have $|b(x,\omega)| \leq \mathrm{Lip}(b)|x| + |b(0,\omega)|$, so \eqref{starstar} follows once we show that for every $T>0$ and $\omega$, we have  
\begin{align}
 \label{eq:int.As.ds}
  \int_0^T A(s) \, \d s = C(\omega,T)<\infty, \quad 
  \text{ where } A(t):= E^{P^{b,\omega}_0} \big[ |X_t|^2 \big]. 
\end{align}
For this, we keep $b$ and $\omega$ fixed and apply It\^o's formula 
to \eqref{eq:Xt.diffusion} and find 
\[
|X_t|^2 = \int_0^t X_s {\cdot} \sigma(X_s) \d B_s + \int_0^t 
\Big( X_s{\cdot}\big(\mathrm{div}(a(X_s)) + a(X_s) b(X_s)\big) + \frac12 \mathrm{trace}(a(X_s))\Big) \d s 
\]
Taking the expectation and exploiting the independence of 
$\d B_s$ we obtain
\begin{align*}
    A(t)&= 0 + \int_0^t E^{P^{b,\omega}_0}\bigg[ \Big( X_s{\cdot}(\big(\mathrm{div}(a(X_s)) + a(X_s) b(X_s)\big) + \frac12 \mathrm{trace}(a(X_s))\Big) \bigg] \d s   
    \\
    &\leq \int_0^t   E^{P^{b,\omega}_0}\bigg[ \sup|\mathrm{div}(a)|\,|X_s| + \sup|a|\, \mathrm{Lip}(b)\,|X_s|^2 + \frac12 \sup \mathrm{trace}(a) \bigg] \d s
    \\ 
    & \leq \int_0^t \Big( C_0 + C_1 A(s) \Big) \d s,
\end{align*}
where $C_0$ and $ C_1 $ only depend on $a$, $b$, and $\omega$. Now, the Gr\"onwall lemma  yields the estimate 
\[
A(t) \leq \frac{C_0}{C_1} \Big( \mathrm{e}^{C_1 t} - 1 \Big) \quad \text{ for }t\in [0,T].
\]
Using this, it is immediate that \eqref{eq:int.As.ds} holds with $C(\omega,T) = \frac{C_0}{C_1^2} \big( \mathrm{e}^{C_1T} - 
1-C_1T\big)$.  

Thus the supremum in \eqref{lemma4.2 eq1} can be restricted to controls of the form \eqref{Markovian control2}, leading to a lower bound for each $\eps>0$, $(b,\phi)\in \mathcal{E}$, $t \in [0,T]$ and $\P_0$-a.e. $\omega \in \Omega_0$:
\begin{equation}\label{lemma4.2 eq2}
	 u_\eps(t,0,\omega)\geq E^{P_{0}^{b,\omega}}\bigg[f(\eps
         X_{t/\eps})-\eps\int_0^{t/\eps}L(X_s,b(\tau_{X_s}\omega))\d s\bigg]. 
\end{equation} 
We now recall \eqref{eq:ergodic-limits} from Corollary \ref{lemma2-ergodic} which says that, for $\P_0$-a.e. $\omega\in\Omega_0$ and $P^{b,\omega}_0$ almost surely and in $L^1(P^{b,\omega}_0)$ we have,  uniformly in $t\in [0,T]$, 
\begin{equation}\label{lemma4.2 eq3}
\lim_{\eps\to 0} \eps X_{t/\eps}= t m(b,\phi)\quad\mbox{and}\quad 
\lim_{\eps\to 0} \eps \int_0^{t/\eps} L(X_s,b(\tau_{X_s}\omega)) \d s = t h(b,\phi).
\end{equation}
 Defining the auxiliary function 
\[
g_\eps(\omega):= \sup_{0\leq t \leq T} 
   E^{P^{b,\omega}_0}  \bigg[ 
   \Big| \eps X_{t/\eps} - t m(b,\phi)\Big|\, +\, 
    \Big| \eps \int_0^{t/\eps} L(X_s,b(\tau_{X_s}\omega)) \d s - t h(b,\phi)\Big| \bigg]
\]
we have $g_\eps(\omega) \to 0$ as $\eps \to 0$ for $P^{b,\omega}_0$ almost all $\omega$. 
Applying  Egoroff's theorem  to $ g_\eps$ we find for all $\eta>0$ a subset $N_\eta \subset \Omega_0$ with $\P_0(N_\eta)\geq 1- \eta$ such that 
\begin{equation}\label{lemma4.2 eq4}
\begin{aligned}
&\lim_{\eps\to 0} \sup_{\omega \in N_\eta} \sup_{0\leq t \leq T} E^{P^{b,\omega}_0}
\big[\big| \eps X_{t/\eps} - t m(b,\phi)\big|\big]=0, \\
&\lim_{\eps\to 0} \sup_{\omega \in N_\eta} \sup_{0\leq t \leq T} E^{P^{b,\omega}_0}
\bigg[\bigg| \eps \int_0^{t/\eps} L(X_s,b(\tau_{X_s}\omega)) \d s - t h(b,\phi)\bigg|\bigg]=0.
\end{aligned}
\end{equation}
Since $f$ is assumed to be uniformly continuous in \ref{f4}, combining \eqref{lemma4.2 eq2} and \eqref{lemma4.2 eq4} proves the lemma. 
\end{proof}

The following result extends Lemma \ref{lemma:lower-bound-lemma1}. 
For any $R>0$, we can consider the family of functions 
$\{f^{y},|y|\leq R\}$, where $f^{y}(x):=f(x+y)$. Notice that 
$f^{y}$ is uniformly continuous with the same constants $K_\delta$ 
as in \ref{f4}.

\begin{lemma}
\label{lemma:lower-bound-lemma1-extended}
Assume \ref{assump:est-erg}-\ref{assump:inf-comp}, \ref{f1}, \ref{f2} and \ref{f4} and for any $y \in \R^d$ let 
$u^y_\eps$ be the representation \eqref{eq:u-eps} with initial condition 
$f^y(\cdot)= f(y + \cdot)$. 
 Fix $(b,\phi)\in \mathcal{E}$. Then  for all $\eta>0$, there exists a subset 
 $N_\eta (b,\phi)  \subset \Omega_0$ with $\P_0(N_\eta)\geq 1-\eta$
such that for all $R>0$,  $T>0$, and $\omega \in N_\eta (b,\phi) $  
we have  
\begin{equation}\label{eq Lemma 4.3}
  \liminf_{\eps\to 0} \inf_{0\leq t\leq T}\inf_{  y \in B_R(0) } 
    \Big( u_\eps^{y}(t,0,\omega)-\big(f(y+m(b,\phi) t) - 
    t h(b,\phi) \big) \Big)  \geq  0. 
\end{equation}

\end{lemma}
\begin{proof}

We proceed as in the proof of Lemma \ref{lemma:lower-bound-lemma1}. Indeed, similar to \eqref{lemma4.2 eq2}, we have for each $\eps>0$, $(b,\phi)\in \mathcal{E}$, $t \in [0,T]$, $R>0$, $y \in B_R(0)$ and $\P_0$-a.e. $\omega\in \Omega_0$, 
 \begin{equation}\label{lemma4.2 eq5}
	 u^y_\eps(t,0,\omega)\geq E^{P_{0}^{b,\omega}}\bigg[f^y(\eps X_{t/\eps})-\eps\int_0^{t/\eps}L(X_s,b(\tau_{X_s}\omega))\d s\bigg]=: A_\eps(y,t,\omega).
\end{equation} 
We now recall from \ref{f4} that for every $y\in \R^d$, $f^{y}$ is uniformly continuous with the same constant $K_\delta=K_\delta(f)$ (i.e., for every $\delta>0$, there exists $K_\delta>0$ such that for all $y \in \R^d$ and $x_1, x_2 \in \R^d$, $|f^y(x_1)- f^y(x_2)| =|f(y+x_1)- f(y+x_2)| \leq K_\delta|x_1- x_2| + \delta$).  With this, define the auxiliary function
\[
\widetilde g_\eps(y,\omega):= \sup_{0,t\leq T} \big| 
A_\eps(y,t,\omega) - (f^y(t m(b,\phi)) - t h(b,\phi)) \big|
\]
As in the proof of Lemma \ref{lemma:lower-bound-lemma1} (cf.\ \eqref{lemma4.2 eq4}), we have $ g_\eps (y,\omega) \to 0$ as $\eps \to 0$ for $\P_0$ almost all $\omega$ and all $ y \in B_R(0)$. Now, we use that $f^y$ is defined with $ f $ via shifting by $y$. 
Hence we have 
\[
\big| f^{y_1}(x) - f^{y_2} (x) \big| = \big|f(x{+}y_1) - f(x{+}y_2)\big| \leq \delta + K_\delta 
\big| y_1-y_2| .
\]
Inserting this into the definition of $A_\eps$ we find $ |A_\eps(y_1,t,\omega) - A_\eps(y_2,t,\omega)| \leq \delta + K_\delta |y_1{-}y_2|$, and hence 
\[
\big| \widetilde g_\eps (y_1,\omega) - \widetilde g_\eps(y_2, 
 \omega)\big| \leq \delta + K_\delta \big| y_1 - y_2 \big|. 
\]
This uniform continuity of $y\mapsto g_\eps( y,\omega)$ and the pointwise convergence $g_\eps(y,\omega)\to 0$ as $\eps \to 0$ implies 
\[
\widehat g_\eps(\omega):= \sup_{y \in B_R(0)} \widetilde g_\eps( y, \omega)
\ \longrightarrow \ 0 \quad \text{as } \eps \to 0 
\quad \text{ for $\P_0$ almost all } \omega \in \Omega_0. 
\]
Applying Egoroff's theorem to the family $ \widehat g_\eps$ we find, for 
all $\eta>0$, a subset  $N_\eta \subset \Omega_0$ with $\P_0(N_\eta) 
\geq 1- \eta$ such that for any $R,T>0$ and $(b,\phi)\in \mathcal E$, 
\begin{equation}
 \label{lemma4.2 eq7}
  \lim_{\eps\to 0} \sup_{\omega \in N_\eta} \widehat g_\eps(\omega) = 
  \lim_{\eps\to 0} \sup_{\omega \in N_\eta} \sup_{y\in B_R(0) } 
   \sup_{t\in [0,T]} \big| A_\eps(y,t,\omega) - 
 (f^y(t m(b,\phi)) - t h(b,\phi))\big| =0.
\end{equation}
Combining \eqref{lemma4.2 eq5} and \eqref{lemma4.2 eq7} proves \eqref{eq Lemma 4.3}. 
\end{proof}

\begin{lemma}[$\uu$ and effective Hamiltonian]
\label{lemma2}
Assume \ref{assump:est-erg}-\ref{assump:inf-comp}, \ref{f1}, \ref{f2}-\ref{f4} and recall from \eqref{eq:lax-oleinik} that $u_{\hom}(t,x)=\sup_{y\in \R^d}(f(y)-t\, 
  \mathcal{I}( \frac{y{-}x}{t}))$ where $\mathcal{I}(y):=\sup_{\theta\in \R^d} [\langle \theta,y\rangle-
 \overline{H}(\theta)]$ and $\overline{H}(\theta)	
:=\sup_{(b,\phi)\in \mathcal{E}}[\langle
     \theta,m(b,\phi)\rangle-h(b,\phi)]$ with $h(b,\phi)$ and $m(b,\phi)$ defined in \eqref{eq:ergodic-limits}. Then the following hold: (A) 
\begin{equation}
  \label{claim-uhom}
  u_{\hom}(t,x)=\sup_{(b,\phi)\in \mathcal{E}}\big[ f(x{+}m(b,\phi)t)-th(b,\phi) \big].
\end{equation}

 (B) The effective Hamiltonian $\overline{H}$ and its convex conjugate $\mathcal
I$ statisfy 
\begin{equation}
  \label{eq:olH.calI.esti}
  \overline{H}(\theta) \leq c_{16} |\theta|^\alpha + c_{17} \quad
\text{and} \quad \mathcal I(z) \leq c_{18}|z|^{\alpha'} -c_{17}. 
\end{equation}
(C) For all $\delta>0$ there exists $K'_\delta$ such that
\begin{equation}
  \label{eq:uu.upper.bound}
  \uu(t,x) \leq f(x) + \delta + t K'_\delta \quad \text{for all }t>0,\ x\in
  \R^d. 
\end{equation}
(D) There is a constant $C<\infty$ such that  
$\sup_{(t,x,\omega)\in [0,T]{\times} B_R(0) {\times} \Omega_0} \,\, |u_\eps(t,x,\omega)| \leq C(1+R+T)$. 
\end{lemma}
\begin{proof}
Note first that by definition of $\overline{H}$ and
\eqref{eq:ergodic-limits},
\begin{equation}\label{Hbar lemma2}
\begin{aligned}
\overline{H}(\theta)	
&=\sup_{(b,\phi)\in \mathcal{E}}[\langle
     \theta,m(b,\phi)\rangle-h(b,\phi)]
  =\sup_{y\in \R^d}\sup_{\substack{(b,\phi)\in \mathcal{E}:\\ m(b,\phi)=y}} 
  \:\big[\langle \theta,y\rangle-h(b,\phi) \big] 
=\sup_{y\in\R^d}\big[\langle \theta,y\rangle
   -\inf_{\substack{(b,\phi)\in \mathcal{E}:\\ m(b,\phi)=y}}h(b,\phi)\big].
\end{aligned}
\end{equation}
On the other hand, since $\mathcal{I}$ is the convex conjugate of
$\overline{H}$, we conclude that 
\begin{equation}
 \label{eq:mathcal-I-alt-formula}
  \mathcal{I}(y)=\inf_{\substack{(b,\phi)\in \mathcal{E}:\\ m(b,\phi)=y}}h(b,\phi).
\end{equation}
As a result, and using \eqref{eq:mathcal-I-alt-formula},
\begin{equation}
 \label{eq:uhom-identity}
   \begin{aligned}
    u(t,x)&=\sup_{(b,\phi)\in \mathcal{E}}[f(x{+}m(b,\phi)t)-th(b,\phi)] 
     =\sup_{y\in\R^d}\sup_{\substack{(b,\phi)\in \mathcal{E}:\\ m(b,\phi)=y}} 
         [f(x{+}yt)-th(b,\phi)]\\
    &=\sup_{y\in\R^d}\big[f(x{+}yt)- t \mathcal{I}(y) \big] =u_{\hom}(t,x),
  \end{aligned}
\end{equation}
which finishes the proof of (A).

To establish (B) we use the upper representation of $\overline H$ in
\eqref{eq:effective.hamiltonian}, the lower bound for $L$ in \ref{f2}  (which
is equivalent to the upper bound of $H$), and $\int \phi\,\d \P_0=1$ for all
$(b,\phi)\in \mathcal E$. This gives 
\begin{align*}
\overline H(\theta)&=\sup_{(b,\phi)\in \mathcal E} \int \Big( \frac12
\mathrm{div}(a\theta) + \langle b,\theta\rangle_a - L(b,\theta) \Big) \phi\,\d\P_0
\\
&\leq \sup_{(b,\phi)\in \mathcal E} \int \Big( \frac12
\big\|\mathrm{div}\;\!a\big\|_{L^\infty}|\theta| + \| b\|_a |\theta| -
c_{10} \|b\|_a^{\alpha'} + c_{11}\Big) \phi\,\d\P_0
\\
&\leq \Big(\frac12 \big\|\mathrm{div}\;\!a\big\|_{L^\infty}|\theta| + c
|\theta|^\alpha   + c_{11}\Big)\sup_{(b,\phi)\in \mathcal E} \int
\phi\,\d\P_0\ \leq c_{16} |\theta|^\alpha + c_{17}. 
\end{align*} 
From this upper boun d of $\overline H$ the lower bound for $\mathcal I$ follws
by Legrendre transformation. 

To prove (C) we use \ref{f4} and the lower bound for $\mathcal I$ as follows:
\begin{align*}
\uu(t,x)&=f(x) + \sup\nolimits_{y\in \R^d} \Big( f(x{+}yt) -f(x) - t \mathcal I(y) \Big)
\\
&\leq f(x) + \sup\nolimits_{y\in \R^d} \Big( \delta + K_\delta|ty| -
t \big(c_{18}|y|^{\alpha'} - tc_{17}\big) \Big) \ = \ f(x) + \delta +
K_\delta'\, t,
\end{align*}
which proves (C). To prove the uniform bounds on $u_\eps$ in (D), we use \ref{f2}, implying $L(q,\omega)\geq -c_{11}$ and
$L(0,\omega)\leq c_{13}$. Moreover, \ref{f4} with $\delta=1$ implies 
$|f(x){-}f(y)| \leq 1+ K_1|x{-}y]$.  Arguing as in
\eqref{eq:u-eps_trivial_lower_bound}, for all $\eps\in (0,1)$, $t>0$,
$x\in \R^d$, we obtain 
\begin{align}
  \label{eq:ueps.lower.upper}
-1-c_{13} t - K_1\,\big(\sqrt{t} + t\big)  \leq   u_\eps(t,x,\omega) - f(x)
\leq 1 +c_{11} t + K_1\,\big(\sqrt{t} + t\big)  .
\end{align}
This estimate together with
$|f(x)| \leq K_1|x| + |f(0)| + 1$ (from \ref{f4} with $\delta=1$) give
$|u_\eps(t,x,\omega)| \leq C(1+R+T)$ for all $(t,x,\omega)\in [0,T]{\times}
B_R(0) {\times} \Omega_0$, proving (D). 
\end{proof}

\subsection{Proof of lower bound.}\label{sec proof lb}

\begin{theorem}[Lower bound for $u_\eps$.]
\label{thm lower bound}
Let $u_{\eps}(t,x)$ be defined as in \eqref{eq:u-eps} and $u_{\hom}$ as in
\eqref{eq:lax-oleinik}. Assume \ref{assump:est-erg}-\ref{assump:inf-comp}, \ref{f1}, \ref{f2}-\ref{f4} and fix $T,R>0$. Then for every $0<r < R$, there is a constant $g(r)\to 0$ as $r\to 0$ such that for $y\in B_R(0)$ with $B_r(y) \subset B_R(0)$,  $\P_0$-a.s., 
\begin{equation}
  \label{eq lower bound} 
  \begin{aligned}
&\liminf_{\eps\to 0} \inf_{t\in [0,T]} \frac 1 {|D(\eps,r,y,\omega)|} \int_{D(\eps,r,y,\omega)} \d x \,  \, 
[ u_{\eps}(t,x,\omega) {-} u_{\hom}(t,y)]\geq -g(r), \quad\mbox{where}\\
&\qquad D(\eps, r,y,\omega):= \eps \mathcal C_\infty(\omega) \cap B_r(y).
\end{aligned}
\end{equation}
\end{theorem} 
\begin{proof}
Recall that by \eqref{eq Lemma 4.3} from Lemma 
\ref{lemma:lower-bound-lemma1-extended}, for any $(b,\phi)\in \mathcal E$ and given 
any $\eta>0$, there is $N_\eta(b,\phi) \subset\Omega_0$ with 
$\P_0(N_\eta(b,\phi))\geq 1- \eta$ such that for all $T, R>0$, 
$$
\liminf_{\eps\to 0} \inf_{0\leq t \leq T}  \inf_{\omega \in N_\eta} \inf_{y \in B_R(0)\cap \eps \mathcal C_\infty} \big[u^y_\eps(t,0,\omega) - \big(f^y(t m(b,\phi)) - t h(b,\phi)\big)\big]\geq 0. 
$$
Now for any $y\in B_R(0)$ with $B_r(y)\subset B_R(0)$ we have $B_r(y)\cap \eps \mathcal C_\infty \subset B_R(0) \cap \eps \mathcal C_\infty(\omega)$. Therefore, it follows from the above estimate that for any $(b,\phi)\in \mathcal E$ and  $\eta>0$, there is $N_\eta(b,\phi) \subset\Omega_0$ with 
$\P_0(N_\eta(b,\phi))\geq 1- \eta$ such that 
\begin{equation}\label{eq1 lb}
\liminf_{\eps\to 0} \inf_{0\leq t \leq T}  \inf_{\omega \in N_\eta} \inf_{x \in B_r(y)\cap \eps \mathcal C_\infty} \big[u^{x}_\eps(t,0,\omega) - \big(f^{x}(t m(b,\phi)) - t h(b,\phi)\big)\big]\geq 0. 
\end{equation}
Now let us decompose $D(\eps, r,y,\omega)= \eps \mathcal C_\infty(\omega) \cap B_r(y)$ as a disjoint union $D^1(\eps,r,y,\omega)\cup D^2(\eps,r,y,\omega)$ where $D^2(\eps,r,y,\omega)=\{y^\prime\in B_r(y)\cap \eps\mathcal C_\infty(\omega): \tau_{y^\prime/\eps}\omega\notin N_\eta\big\}$ and $D^1(\eps,r,y,\omega)$ denotes its complement in the set $D(\eps, r,y,\omega)$. Now 
$$
\begin{aligned}
\frac{|D^2(\eps,r,y,\omega)|}{|D(\eps,r,y,\omega)|}=\frac{\big|\big\{x\in B_r(y)\cap \eps\mathcal C_\infty(\omega): \tau_{x/\eps}\omega\notin N_\eta\big\}\big|}{\big|\big\{x\in B_r(y)\cap \eps\mathcal C_\infty(\omega)\big|}
&= \frac{\big|\big\{z\in B_{r/\eps}(y/\eps)\cap \mathcal C_\infty(\omega): \tau_{z}\omega\notin N_\eta\big\}\big|}{\big|\big\{z\in B_{r/\eps}(y/\eps)\cap \mathcal C_\infty(\omega)\big|} \\
&= \frac{\frac 1{|B_{\frac r\eps}|} \int_{B_{\frac r \eps}(y/\eps) \,\, F(\tau_z\omega) \d z}}{\frac 1{|B_{\frac r\eps}|} \int_{B_{\frac r \eps}(y/\eps) \,\, G(\tau_z\omega) \d z}} 
    \end{aligned}
$$
where $F(\omega)= \1_{\{0\in \mathcal C_\infty(\omega)\}} \, \1_{\{\omega\notin N_\eta\}}$ and $G(\omega)= \1_{\{0\in \mathcal C_\infty(\omega)\}}$. Now for every fixed $r>0$, the sets $A_\eps= B_{\frac r\eps}(y/\eps)$ satisfy the F{\o}lner condition, i.e., for every $x\in \R^d$, $\lim_{\eps\to 0} \frac{|\tau_x A_\eps \Delta A_\eps|}{|A_\eps|}=0$. Moreover, since $\P$ is measure preserving and ergodic w.r.t. $\{\tau_z\}_{z\in\R^d}$, by the multidimensional ergodic theorem, there exists a set $N\subset\Omega$ such that $\P(N)=1$ (and therefore $\P_0(N)=1$) such that for every $\omega\in N$, 
\begin{equation}\label{spatial erg}
\lim_{\eps\to 0} \frac{|D^2(\eps,r,y,\omega)|}{|D(\eps,r,y,\omega)|} = \frac{\E^P[F]}{ \E^{\P}[G]}= \frac{\P[0\in \mathcal C_\infty, N_\eta^c]}{\P[0\in \mathcal C_\infty]}= \P_0[N_\eta^c]\leq \eta.
\end{equation}

\noindent Next we decompose 
\begin{equation}\label{dec D1D2}
\begin{aligned}
&\frac 1 {|D(\eps,r,y,\omega)|} \int_{D(\eps,r,y,\omega)} \d x \,   
\big[ u_{\eps}(t,x,\omega) - \big(f(x+ tm(b,\phi))- t h(b,\phi)\big)\big ] \\
&= \frac 1 {|D(\eps,R,\omega)|} \int_{D^1(\eps,r,y,\omega)} \d x \,  \, 
\big[ u_{\eps}(t,x,\omega) - \big(f(x+ tm(b,\phi))- t h(b,\phi)\big)\big ] \\
&\qquad\qquad\qquad\qquad+ \frac 1 {|D(\eps,r,y,\omega)|} \int_{D^2(\eps,r,y,\omega)} \d x \,   \, 
\big[ u_{\eps}(t,x,\omega) - \big(f(x+ tm(b,\phi))- t h(b,\phi)\big)\big ]
\end{aligned}
\end{equation}
We first handle the first integral over $D^1(\eps,r,y,\omega)$. Note that, the identity \eqref{shift2} implies that $u_\eps(t,x,\omega)= u_\eps^x(t,0,\tau_{x/\eps}\omega)$, which, together with \eqref{eq1 lb} and the fact that $x\in D^1(\eps,r,y,\omega)$ implying $x\in B_r(y)\cap \eps\mathcal C_\infty(\omega)$ and  $x/\eps \in N_\eta$, dictate that for any $(b,\phi)\in \mathcal E$, $\eta>0$ and $\omega\in N_\eta(b,\phi)$, 
\begin{equation}\label{est D1}
\liminf_{\eps\to 0} \inf_{t\in [0,T]} \frac 1 {D(\eps,r,y,\omega)|} \int_{D_1(\eps,r,y,\omega)} \d x \,  
\big[ u_{\eps}(t,x,\omega) - \big(f(x+ tm(b,\phi))- t h(b,\phi)\big)\big ] \geq 0. 
\end{equation}
To estimate the second integral over $D^2(\eps,r,y,\omega)$ in \eqref{dec D1D2}, 
we will invoke uniform estimates on $u_\eps(t,x,\omega)$ and on $u_{\mathrm{hom}}(t,y)$ from Lemma \ref{lemma2} (C) and (D) and then apply \eqref{spatial erg} to exploit that the integral over $D^2(\eps,r,y,\omega)$ is taken over a set whose relative density is at most $\eta$.  
Indeed, from Lemma \ref{lemma2} (D), 
$|u_\eps(t,x,\omega)| \leq C(1+R+T)$ for all $(t,x,\omega)\in [0,T]{\times}
B_R(0) {\times} \Omega_0$ and from Lemma \ref{lemma2} (A) and (D),  $f(y+ t m(b,\phi))- t h(b,\phi) \leq \sup_{(b,\phi)\in \mathcal E} [f(y+ t m(b,\phi))- t h(b,\phi)]=\uu(t,y)\leq C(R,T)$. Hence, there is a constant $C=C(R,T)$, such that 
$$
\frac 1 {D(\eps,r,y,\omega)|} \inf_{t\in [0,T]} \int_{D^2(\eps,r,y,\omega)} \d x \,  
\big[ u_{\eps}(t,x,\omega) - \big(f(x+ tm(b,\phi))- t h(b,\phi)\big)\big ] \geq -C \frac{D^2(\eps,r,y,\omega)}{D(\eps,r,y,\omega)}.
$$
Applying \eqref{spatial erg} we have that there is $C\in(0,\infty)$ such that for every $(b,\phi)\in \mathcal E$ and every $\eta>0$, and $\P_0$-a.s., 
\begin{equation}\label{est D2}
\liminf_{\eps\to 0} \inf_{t\in [0,T]} \frac 1 {D(\eps,r,y,\omega)|} \int_{D^2(\eps,r,y,\omega)} \d x \,  \big[ u_{\eps}(t,x,\omega) - \big(f(x+ tm(b,\phi))- t h(b,\phi)\big)\big ] \geq -C\eta
\end{equation}
Combining \eqref{est D1} and \eqref{est D2} it follows from \eqref{dec D1D2} that there is $C\in (0,\infty)$ such that for every $(b,\phi)\in \mathcal E$,  $\eta>0$ and $\omega\in N_\eta(b,\phi)$, 
\begin{equation}\label{est D1D2}
\liminf_{\eps\to 0} \inf_{t\in [0,T]} \frac 1 {D(\eps,r,y,\omega)|} \int_{D(\eps,r,y,\omega)} \d x \,   
\big[ u_{\eps}(t,x,\omega) - \big(f(x+ tm(b,\phi))- t h(b,\phi)\big)\big ] \geq -C\eta
\end{equation}

From here we will now prove \eqref{eq lower bound}. Using $\P_0(N_\eta(b,\phi))>1-\eta$, we 
conclude that $\widehat N(b,\phi):=\cup_{\eta>0} N_\eta (b,\phi)$ has full 
measure, i.e.\ $ \P_0(\widehat N(b,\phi)) = 1 $. Next we convince ourselves that $u_{\hom}$ in \eqref{claim-uhom} can already be characterized by 
a countable  subset 
$ \big( (b_j,\phi_j)\big)_{j\in \N} \subset \cE$. To see this, we first observe 
that \eqref{claim-uhom} can be rewritten as 
\[
u_{\hom}(t,y)= \sup_{(m,h)\in \mathbf A} \big[ f(y{+}tm) - t h\big] \quad 
\text{ with } \mathbf A:= \bigset{ \big(m(b,\phi),h(b,\phi)\big) \in \R^{d+1} }
 { (b,\phi)\in \cE }. 
\]
Moreover, for fixed $(t,y)$ the function $(m,h) \mapsto f(y{+}tm) -th$ is 
continuous. Hence, it suffices to replace the set $\mathbf A$ in the supremum 
by any dense set $ \mathbf D \subset \mathbf A$. Because $\mathbf A \subset 
\R^{d+1}$ we can choose $\mathbf D$ to be countable, namely $\mathbf D = 
\bigset{(m_j,h_j)\in \mathbf A}{ j\in \N}$. By definition there exist $(b_j,\phi_j) 
\in \cE$ such that $(m_j,h_j)=m \big(b_j,\phi_j), h(b_j,\phi_j)\big)$. Hence, 
we arrive at the relation 
\begin{align}
 \label{eq:u-hom.dense}
   u_{\hom}(t,y)= \sup\nolimits_{ j \in \N} \, u^j(t,y), \qquad u^j(t,y):= f(y{+}tm(b_j,\phi_j))
   - t h(b_j,\phi_j).    
\end{align}

\noindent Using this, we now define the set $ \widetilde N := \cap _{j\in \N} 
\widehat N(b_j,\phi_j)$, which still has full $\P_0$ measure, and obtain from \eqref{est D1D2} that for every $y\in B_R(0)$ with $B_r(y)\subset B_R(0)$, 
\begin{equation}
  \label{eq:3.lb}
 \forall\, j\in \N\ \forall\, \omega \in \widetilde N {:} \quad 
 \liminf_{\eps\to 0} \inf_{t\in [0,T]}\frac 1 {|D(\eps,r,y,\omega)|} \int_{D(\eps,r,y,\omega)} \d x   \,
 \big[ u_\eps(t,x,\omega) - u^j(t,x)\big] \geq 0.
\end{equation}

Now for any $\eps_n\to 0$, $n$ large enough, $t\in [0,T]$, $y\in B_R(0)$ with $B_r(y)\subset B_R(0)$, $j\in \N$ and $\omega\in N$ with $\P_0(N)=1$, 
\begin{equation}\label{mod cont}
\frac 1 {|D(\eps_n,r,y,\omega)|} \int_{D(\eps_n,r,y,\omega)} \d x   \,
 u_{\eps_n}(t,x,\omega) \geq \frac 1 {|D(\eps_n,r,y,\omega)|} \int_{D(\eps_n,r,y,\omega)} \d x \,\,  u^j(t,x).
\end{equation}
Recalling \eqref{eq:u-hom.dense}, we have $u^j(t,y)= f(y+ tm_j) - t h_j$, where $m_j=m(b_j,\phi_j)$ and $h_j=h(b_j,\phi_j)$. Therefore, by \ref{f4}, for any $\delta>0$ there exists $K_\delta$ such that $|u^j(t,z)- u^j(t,x)|= |f(x+ tm_j)- f(z+ t m_j)| \leq K_\delta|x-z| + \delta$. So if we define \begin{equation}\label{def gr}
g(r):= \inf_{\delta>0}[K_\delta r + \delta]
\end{equation}
then $g(r)\to 0$ as $r\to 0$. Indeed, given $\eps>0$ choose $\delta=\eps/2$ and then pick $r< \eps/(2K_{\eps/2})$ so $ r K_{\eps/2}+ \eps/2<\eps$, so $\omega(r)\leq \eps$. Hence, $g(r)\to 0$ as $r\to 0$. Moreover, for all $j$ and $t\in [0,T]$ and $x,z\in B_R(0)$, $|u^j(t,z)- u^j(t,x)|\leq g(|x-z|)$. Therefore, we have  
$$
\frac 1 {|D(\eps_n,r,y,\omega)|} \int_{D(\eps_n,r,y,\omega)} \d x \,\,  u^j(t,x) \geq  u^j(t,y) - g(r).
$$
Since the above bound holds for every $j$, we can now pass to the supremum over $j$, use from \eqref{eq:u-hom.dense} that $\uu(t,y)= \sup_j u^j(t,y)$ and conclude from \eqref{mod cont} that  
\begin{equation}\label{mod cont 2}
\frac 1 {|D(\eps_n,r,y,\omega)|} \int_{D(\eps_n,r,y,\omega)} \d x   \,
 u_{\eps_n}(t,x,\omega) \geq \uu(t,y) - g(r).
\end{equation}
Bringing $\uu(t,y)$ to the left hand side, passing to $\inf_{t\in[0,T]}$ and then to $\liminf_{\eps_n\to 0}$, we obtain \eqref{eq lower bound}, concluding the proof of Theorem \ref{thm lower bound}.

\end{proof}

\section{Entropic Variational Analysis}
\label{sec-equivalence}

Recall from \eqref{eq:ergodic-limits} and \eqref{Hbar lemma2}, the variational formula for $\overline H(\theta)$: 
\begin{equation}\label{Hbar-2}
\overline{H}(\theta)= \sup_{(b,\phi)\in \mathcal{E}} 
   \bigg(\int  d\P_0 \, \phi \left[\frac{1}{2}\mathrm{div}(a\theta)
   +\langle \theta,b\rangle_a- L(b,\cdot))\right].
\end{equation}

The goal of this section is to prove Theorem \ref{thm-lb-geq-ub} below establishing 
the lower bound 
\begin{equation}
 \label{eq:Lambda_def}
		\overline H(\theta) \geq \overline\Lambda(\theta) \qquad\forall \theta\in \R^d, \quad\mbox{where} \quad \overline{\Lambda}(\theta):=\inf_{G\in \mathcal{G}_{\delta}}\left(\mathrm{ess\,sup}_{\P_0}\left[\frac{1}{2}\mathrm{div}(a(G+\theta))+ H(G+\theta)\right]\right).
\end{equation}
We now define the class of gradients $\mathcal G_\delta$ and 
the corresponding ``correctors". 
\subsection{Correctors.}
\label{classG}
Given any $\delta>0$, we start this section by defining the class of gradients
$G\in \mathcal{G}_\delta$ and the corresponding ``correctors"
$V_G:\R^d\times\Omega_0 \rightarrow \R^d$.  Let $\mathcal{G}_\delta$ be the
class of functions $G:\Omega_0\to \R^d$ satisfying the following properties:

\noindent$\bullet$ {\bf $L^{1+\delta}(\P_0)$-boundedness}:  
The following inequalities hold:
\begin{equation}\label{eq:unifbound-L-alpha} 
\|G\|_{L^{1+\delta}(\P_0)}<\infty,
\end{equation}	
and \begin{equation}\label{eq:unifbound-div}
	\esssup_{\P_0}\left[\frac{1}{2}\mathrm{div}(a(G+\theta))+H(G+\theta)\right]<\infty.
\end{equation}
		
\noindent$\bullet$ {\bf Curl-free property on the cluster}: Given any 
$G: \Omega_0\to \R^d$, with a slight abuse of notation we will continue to write 
\[
G: \R^d \times \Omega_0 \to \R^d, \quad\mbox{with}\quad G(x,\omega)=G(\tau_x \omega).
\]
Now, for $\P_0$-almost every $\omega\in \Omega_0$, we require that $G$ is curl-free,
meaning $\nabla \times G(\cdot,\omega)=0$ on $\mathcal C_\infty$, or simply, for 
$\P_0$-almost every $\omega\in \Omega_0$, we have 
\begin{equation}\label{eq:closed}
\int_{\mathrm C} G(\cdot,\omega)\,\cdot \d\mathbf r=0
\end{equation}
for every rectifiable simple closed path $\mathrm C$ on $\mathcal C_\infty$. For all
$G$ satisfying \eqref{eq:closed} we define $V_G:\R^d\times \Omega_0\to \R^d$ by 
\begin{equation}
 \label{def-V}
	V_G(x,\omega):=\int_{0\leadsto x} G(\cdot,\omega)\cdot\d\mathbf r,
\end{equation} 
where $0\leadsto x$ is any piecewise smooth curve contained in $\mathcal C_\infty$ 
(and 0 when $x\notin \mathcal C_\infty$). Note that the choice of the smooth curve 
is irrelevant, thanks to \eqref{eq:closed}.

\noindent$\bullet$ {\bf Zero induced mean:} Recall the definition of $n(\omega,e)$ 
from \eqref{def-n} and set $\mathfrak v_e=\mathfrak v_e(\omega)=n(\omega,e)e$. Then 
we require that 
\begin{equation}
 \label{eq:meanzero}
  \bE_0[V_G(\mathfrak v_e,\cdot)]=0.
\end{equation}

\begin{definition}\label{def-class-G}
For any $\delta>0$, we say that $G\in \mathcal G_\delta$ if 
\eqref{eq:unifbound-L-alpha}-\eqref{eq:closed} and \eqref{eq:meanzero} hold.
Similarly, we declare that $G\in \mathcal{G}_\infty$ if the above conditions hold, 
but replace $\eqref{eq:unifbound-L-alpha}$,  by 
\begin{equation}\label{eq:unifbound}
	\mathrm{ess\,sup}_{\P_0}|G(\omega)|<\infty.
\end{equation}
\end{definition}

A crucial fact about the correctors $V_G$ for $G\in \mathcal{G}_\infty$ is 
underlined by the following result that dictates their \textit{sublinear} growth 
at infinity inside $\mathcal C_\infty$:

\begin{theorem}
\label{thm:sublinear}
Assume \ref{assump:est-erg}-\ref{assump:fkg}. Fix $d\geq 2$, $G\in \mathcal{G}_\infty$ and recall that $D(\eps, r):= D(\eps,r,\omega)= \eps \mathcal C_\infty(\omega) \cap B_r(0)$ for $r>0$. Then for $\P_0$-a.e. $\omega\in \Omega_0$,
we have 
\[
\lim_{\eps\to 0 }\sup_{x \in D(\eps,1)} \eps 
   \big|V_G\big(\frac x\eps,\omega\big)\big |=0 \qquad \P_0\textrm{-a.s.} 
\]
\end{theorem}	
\noindent  The above result holds the key for the upper bound shown later in 
Proposition \ref{prop: up-bound-lin-dat}. 
The proof of Theorem \ref{thm:sublinear} is quite long and technical, and will 
be deferred to until Section \ref{sec-proof-sublinear}. 	
	
\subsection{The lower bound $\overline H(\cdot) \geq\overline\Lambda(\cdot)$}
 \label{subsec lb HLambda}
 
As mentioned before, our goal in this section is to prove  

\begin{theorem}\label{thm-lb-geq-ub}
Assume \ref{assump:est-erg}-\ref{assump:fkg}, \ref{f1}, \ref{f1'}, \ref{f2}-\ref{f3}. Then for any $\theta\in \R^d$, 
$$
\overline{H}(\theta) \geq \overline\Lambda(\theta).
$$
\end{theorem}
In Theorem \ref{thm-equivalence} we will show that in fact equality holds in the above theorem. 
The proof of Theorem \ref{thm-lb-geq-ub} is divided into several steps. In Section \ref{sec-lb-geq-ub} we establish this lower bound by combining three crucial results: Lemma \ref{lemma1-lemma-minmax3}, Lemma \ref{lemma2-lemma-minmax3} and Proposition \ref{prop-minmax3}. The latter result is proved in Section \ref{sec-prop-minmax3}. From now on we assume the same hypotheses from Theorem \ref{thm-lb-geq-ub}.

We set 
\begin{equation}\label{def-class-D}
\mathcal D:=\big\{g:C^2_c(\Omega_0): g: \Omega_0 \to\R\big\}
\end{equation}
to be the linear space of functions on $\Omega_0$ with compact support, such that their first and second weak derivatives (defined in Section \ref{sec:env-process}) exist and are continuous. For any $g\in\mathcal{D}$, define 
\begin{equation}\label{def-R-theta}
 R_{\theta} g(\omega)=\frac 12 \, \mathrm{div}\big(a(\omega) \,(\nabla g(\omega)+\theta)\big), \quad\mbox{and write}\quad R= R_0.
 \end{equation}
As already observed in \cite{KRV06}, we have that for any  $g\in \mathcal{D}$,
\begin{equation}\label{eq:eq19}
	 \int \d \P_0\phi \big( R g + \langle  b, \nabla g \rangle_a\big)
\begin{cases}
=0 \quad\forall  g\in\mathcal D \qquad\qquad\, \mbox{if} \qquad ( b, \phi)\in\mathcal E,
\\
\ne 0\quad \mbox{for some}\,\, g\in\mathcal D \quad \mbox{if} \qquad ( b, \phi)\notin\mathcal E,
\end{cases}
\end{equation}
and hence, by taking constant multiples if $(b, \phi)\notin\mathcal E$, we conclude that the infimum over $g\in\mathcal D$ on the left hand side of \eqref{eq:eq19} is $0$ if
$( b,\phi)\in\mathcal E$, and $-\infty$ otherwise. Therefore, from \eqref{Hbar-2} it follows 
\begin{equation}\label{Hbar-3}
\begin{aligned}
\overline{H}(\theta)&=\sup_{\phi\in\Phi}\,\,\sup_{ b\in B_\phi}\,\,\inf_{g\in\mathcal D} \bigg[\int \d \P_0\phi \bigg(\frac{1}{2}\mathrm{div}(a\theta)+\langle \theta,b\rangle_a- L(b,\omega))+  \big(R g + \langle b, \nabla g \rangle_a\big)\bigg)\bigg],
\end{aligned}
\end{equation} where \begin{equation}\label{eq:Phi-def}
	\Phi:=\bigg\{\phi\in L^1_+(\P_0):  \int \phi \d\P_0=1\bigg\}.
\end{equation}
Furthermore, for any given $\phi\in\Phi$, we set 
\begin{equation}\label{eq:B-phi-def}
	B_\phi:=\bigg\{b\in L^1_a(\phi \d\P_0):\forall \omega\in \Omega_0: x\mapsto b(\tau_x \omega)\in \mathrm{Lip}\bigg\},
\end{equation}
with $L^1_a(\phi\d\P_0)$ being defined in \eqref{eq:L1a-def}. 
We remark that, for any $\phi\in \Phi$, the set $B_\phi$ contains constant functions $b$.

\subsection{Entropic coercivity and min-max theorems: proof of Theorem \ref{thm-lb-geq-ub}}\label{sec-lb-geq-ub}

First, we will prove 
\begin{lemma}\label{lemma-minmax1}
Let $\overline H(\theta)$ be the variational formula defined in \eqref{Hbar-2} (or equivalently, in \eqref{Hbar-3}). Then 
\begin{equation}\label{eq-minmax1}
\overline H(\theta)= \sup_{\phi\in \Phi} \inf_{g\in \mathcal{D}} \left[\int \d \P_0\phi \big( R_{\theta}g + H(\theta+\nabla g(\omega),\omega)\big)\right].
 \end{equation}
\end{lemma}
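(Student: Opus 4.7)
The plan is to apply a min-max theorem to the representation \eqref{Hbar-3} of $\overline H(\theta)$ for each fixed $\phi\in\Phi$, swapping the order of $\sup_{b\in B_\phi}$ and $\inf_{g\in\mathcal D}$, and then to recognize the resulting inner supremum as the Legendre transform of $L$, which by \ref{f2} equals $H$.

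More concretely, I would fix $\phi\in\Phi$ and study the functional
\begin{equation*}
F(b,g):=\int\phi\,\d\P_0\left[\tfrac{1}{2}\mathrm{div}(a\theta)+\langle\theta,b\rangle_a-L(b,\omega)+Rg+\langle b,\nabla g\rangle_a\right].
\end{equation*}
The map $g\mapsto F(b,g)$ is affine on the convex set $\mathcal D$, while $b\mapsto F(b,g)$ is concave (convexity of $L(\cdot,\omega)$ enters here) and upper semicontinuous on $B_\phi$. The coercivity bound $L(b,\omega)\geq c_{10}\|b\|_a^{\alpha'}-c_{11}$ from \eqref{eq:H1-L} forces the supremum over $b$ to be attained in bounded sub-level sets of $L^{\alpha'}_a(\phi\d\P_0)$, which are weakly relatively compact since $\alpha'>1$. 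The hypothesis that $x\mapsto b(\tau_x\omega)$ is $1$-Lipschitz (built into $B_\phi$) ensures that these sub-level sets are closed, hence weakly compact; as emphasized by the authors in Section \ref{sec-proof-sketch}, this Lipschitz-and-$L^1_a$ formulation of $B_\phi$ (rather than $L^\infty$) is precisely what makes this compactness available.

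With these ingredients in place, I would invoke Sion's min-max theorem (or a suitable extension to locally convex spaces) to conclude, for each $\phi\in\Phi$,
\begin{equation*}
\sup_{b\in B_\phi}\inf_{g\in\mathcal D}F(b,g)=\inf_{g\in\mathcal D}\sup_{b\in B_\phi}F(b,g).
\end{equation*}
For fixed $g\in\mathcal D$, the inner $b$-supremum is computed by pointwise Legendre duality: maximizing $\langle\theta+\nabla g(\omega),b(\omega)\rangle_a-L(b(\omega),\omega)$ over $b(\omega)\in\R^d$ yields $H(\theta+\nabla g(\omega),\omega)$ almost surely. A measurable-selection argument, together with approximation of the pointwise maximizer by a sequence of Lipschitz candidates in $B_\phi$ (mollification in the $x$-variable and truncation, combined with dominated convergence controlled by the coercivity bound), delivers
\begin{equation*}
\sup_{b\in B_\phi}F(b,g)=\int\phi\,\d\P_0\,\bigl[R_\theta g+H(\theta+\nabla g,\omega)\bigr],
\end{equation*}
upon noting the identity $R_\theta g=Rg+\tfrac12\mathrm{div}(a\theta)$. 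Taking $\sup_{\phi\in\Phi}$ yields the claimed identity.

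The main obstacle will be the verification of the topological hypotheses for the min-max swap: one must equip $B_\phi$ with a topology in which $b\mapsto F(b,g)$ is both concave and upper semicontinuous, while simultaneously ensuring that the Lipschitz-in-$x$ constraint survives weak-$L^{\alpha'}_a$ limits. A secondary technical point is the measurable-selection / approximation step needed to replace the pointwise optimizer in the Legendre transform by admissible elements of $B_\phi$ without losing equality; this is where the closedness of $B_\phi$ highlighted in the authors' proof sketch becomes essential.
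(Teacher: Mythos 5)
Your proposal is correct and follows essentially the same route as the paper's proof: a min-max swap of $\sup_{b\in B_\phi}$ and $\inf_{g\in\mathcal D}$ justified by the coercivity of $L$ and the closedness (under weak limits, via an Arzel\`a--Ascoli argument) of the Lipschitz-in-$x$ class $B_\phi$, followed by pointwise Legendre duality to replace the inner supremum by $H(\theta+\nabla g,\omega)$. The only differences are cosmetic: you obtain weak compactness from reflexivity of $L^{\alpha'}_a(\phi\,\d\P_0)$ where the paper establishes it in $L^1_a(\phi\,\d\P_0)$ via uniform integrability and the Eberlein--\v{S}mulian theorem, you cite Sion's theorem rather than the Aubin--Ekeland reference, and you make explicit the measurable-selection/mollification step for moving $\sup_b$ through the integral, which the paper handles tersely by noting that the integrand is local in $b$ and that $B_\phi$ contains all constant maps.
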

\begin{proof}
By \eqref{Hbar-3} and using the definition of $R_\theta$ from \eqref{def-R-theta}, 
 \begin{equation}\label{Hbar-minmax1}
	\overline{H}(\theta)
=\sup_{\phi\in \Phi}\,\,\sup_{ b\in B_\phi}\,\,\inf_{g\in\mathcal D} \bigg[\int \d \P_0\phi \big(\langle \theta+\nabla g, b\rangle_a+ R_{\theta}g - L(b,\omega)\big)\bigg].
\end{equation}

For any given $\phi\in \Phi$, we would like to exchange the supremum over $b\in B_\phi$ with the infimum over $g\in \mathcal D$, for which we would like to apply the min-max theorem from \cite[Theorem 8, p. 319 ]{AE84}, the requirements for which are verified as follows. First, we fix any $\phi\in \Phi$,  note that the map
$$
B_\phi\ni b\mapsto \int \d \P_0\phi \big(\langle \theta+\nabla g, b\rangle_a+ R_{\theta}g - L(b,\omega)\big)\qquad\mbox{is concave and upper semicontinuous}, 
$$
while the map 
$$
\mathcal D\ni g\mapsto \int \d \P_0\phi \big(\langle \theta+\nabla g, b\rangle_a+ R_{\theta}g - L(b,\omega)\big)\qquad\mbox{ is convex and lower semicontinuous.}
$$
We need to verify the remaining compactness (resp.\ coercivity): we will show that for a fixed $\phi \in \Phi$ and $g\in \mathcal{D}$, the level sets  
\begin{equation}\label{Ec}
\begin{aligned}
	E_c:&=\bigg\{b \in B_\phi: \int \d \P_0\phi \big(\langle \theta+\nabla g, b\rangle_a+ R_{\theta}g - L(b,\omega)\big)
 \geq c\bigg\}	
	\\
	&=\bigg\{b\in L_a^1(\phi \d\P_0):{\mathcal C_\infty}(\omega)\ni x\mapsto b(x,\omega)\in \text{Lip and} ~\forall~ \omega\in \Omega_0,\\ 
	&\qquad\qquad\qquad\int \d \P_0\phi \big(\langle \theta+\nabla g, b\rangle_a+ R_{\theta}g - L(b,\omega)\big)
 \geq c\bigg\} \quad\mbox{are weakly compact in $L^1_a(\phi \d\P_0)$.}
\end{aligned} 
\end{equation}
Indeed, by the Eberlein-\v{S}mulian theorem (see \cite[p.430]{DS58}), checking the latter condition is equivalent to verifying that the set $E_c$ above is
\begin{enumerate}
    \item[(A)]   weakly closed, and 
    \item[(B)] sequentially weakly compact in $L^1_a(\phi \d\P_0)$. 
\end{enumerate}
For the second condition (B), it is enough to show that $E_c$ is bounded and uniformly integrable, but both these conditions follow from the coercivity of $L$. Indeed, recall \eqref{eq:H1-L} from \ref{f2}:
$$
c_{10}\|q\|_a^{\alpha'}-c_{11}\leq L(q,\omega)\leq c_{12}\|q\|_a^{\alpha'}+c_{13}, \qquad \alpha^\prime= \frac{\alpha}{\alpha-1},  \,\, 1 < \alpha  <\infty.
$$
On the other hand, using that $g\in \mathcal D$ has compact support and $\nabla g$ is continuous, $|\theta+\nabla g| \leq (|\theta|+ \|\nabla g\|_{L^\infty(\P_0)}=: C_1(\theta,g)<\infty$. Moreover, 
by \ref{f1}, $|\mathrm{div}(a)|\leq C$, so we can find a constant $C_2(\theta,g)$ such that by \eqref{def-R-theta}, 
$|R_\theta g|\leq C_2(\theta,g)$. Hence, 
\begin{equation}\label{eq:eq23}
	\int \d \P_0\phi \big(\langle \theta+\nabla g, b\rangle_a+ R_{\theta}g\big)\leq C_2(\theta,g)+C_1(\theta,g)\int \d\P_0 \phi \|b\|_a<\infty
\end{equation}since $b\in L_a^1(\phi\d\P_0)$, recall \eqref{eq:L1a-def}. This shows that $E_c$ is sequentially weakly compact in $L^1_a(\phi \d\P_0)$.

Thus, it remains to show that $E_c$ is weakly closed. Since $E_c$ is convex, it suffices to show that $E_c$ is strongly closed. Indeed, suppose that $(b_n)_n\subset E_c$ such that $b_n\to b$ in $L_a^1(\phi d \P_0)$. Passing to a subsequence, since $L$ is lower semicontinuous and by Fatou's lemma, one can easily verify that $\int \d \P_0\phi \big(\langle \theta+\nabla g, b\rangle_a+ R_{\theta}g - L(b,\omega)\big)
 \geq c$. We will construct a function $\tilde{b}$ such that $\tilde{b}=b$ $\P_0$-a.s. and for all $\omega\in \Omega_0, ~\R^d\ni x\mapsto \tilde{b}(x,\omega)\in $ Lip. Let $\Omega_0'$ with $\P_0(\Omega_0')=1$ such that $b_n(\omega)\to b(\omega)$ for all $\omega\in \Omega_0'$. For a fixed $\omega\in \Omega_0'$, we know that the family $(b_n(\cdot,\omega))_n$ is uniformly equicontinuous and on any compact set $K\subset \R^d$ and $x\in K$,\begin{equation*}
 	|b_n(x,\omega)|\leq |x|+ |b_n(0,\omega)|\leq \diam(K)+\sup_n |b_n(0,\omega)|.
  \end{equation*}
As $b_n(0,\omega)\to b(0,\omega)$, the supremum above is finite. Hence, for fixed $\omega$, the family of continuous functions $(b_n(\cdot,\omega))_n$ is globally uniformly equicontinuous and uniformly bounded on compact sets. By the Arzel\`{a}-Ascoli theorem, the sequence $(b_n(\cdot,\omega))$ converges uniformly on compact sets and therefore converges pointwise to some function $\tilde{f}(\cdot,\omega)\in$ Lip.  By definition, $\tilde{f}(0,\omega)=b(\omega)$ $\P_0$-a.s. Now let us consider the set \begin{equation*}
	\Omega_0^{''}:=\{\omega\in \Omega_0: \exists x\in \R^d, \omega'\in \Omega_0': \omega =\tau_x \omega'  \}.
\end{equation*}

Then we define \begin{equation}
	\tilde{b}(\omega):=\begin{cases}
		\tilde{f}(x,\omega')~\text{ if } \omega=\tau_x \omega' \text{ for some } x\in \R^d, \omega'\in \Omega_0',\\
		0~\text{ otherwise}.
	\end{cases}
\end{equation}
Let us first check that $\tilde{b}$ is well-defined. Indeed, suppose that $\omega=\tau_x \omega' = \tau_y \omega''$ for some $x,y\in \R^d$ and $\omega',\omega''\in \Omega_0'$. Then \begin{align*}
	\tilde{f}(x,\omega')=\lim_{n\to\infty} b_n(x,\omega')=\lim_{n\to\infty} b_n(0,\tau_x\omega')&=\lim_{n\to\infty} b_n(0,\tau_y\omega'')\\
	&=\lim_{n\to\infty} b_n(y,\omega'')\\
	&=\tilde{f}(y,\omega'').
	\end{align*}
Hence, the function $\tilde b$ is well defined. Notice that on $\Omega_0'$, $\tilde{b}(\omega)=f(0,\omega)=b(\omega)$ , so $\tilde{b}=b$ $\P_0$-a.s. Finally, let us check that for all $\omega\in \Omega_0$, ${\mathcal C_\infty}(\omega)\ni x \mapsto \tilde{b}(x,\omega)\in $ Lip. Indeed, 
\begin{enumerate}
	\item If $\omega\in \Omega_0''$, then $\omega=\tau_z\omega'$ for some $z\in \R^d$ and $\omega'\in \Omega_0$ and for $x,y\in \R^d$,
	\begin{align*}
		|\tilde{b}(x,\omega)-\tilde{b}(y,\omega)|=|\tilde{b}(x,\tau_z \omega')-\tilde{b}(y,\tau_z \omega')|&=|\tilde{f}(x+z,\omega')-\tilde{f}(y+z,\omega')|
		\\&\leq |x-y|.
	\end{align*} 
	\item On the other hand, if $\omega\in \Omega_0\setminus \Omega_0''$, then the same holds for $\tau_x\omega$ for all $x\in {\mathcal C_\infty}(\omega)$, and the Lipschitz condition is trivially satisfied. 
\end{enumerate}

This finishes the proof that $E_c$ is weakly compact in $L_a^1(\phi \d\P_0)$, and therefore, by the aforementioned min-max theorem, we can exchange the $\sup_{b\in B_\phi}$ and $\inf_{g\in \mathcal{D}}$
in \eqref{Hbar-minmax1} to obtain \begin{equation*}
	\overline{H}(\theta)=\sup_{\phi\in \Phi}\inf_{g\in \mathcal{D}}\sup_{b\in B_\phi} \left[\int \d \P_0\phi \big(\langle \theta+\nabla g, b\rangle_a+ R_{\theta}g - L(b,\omega)\big)\right].
\end{equation*}
Since the integrand depends locally in $b$, we can bring the supremum over $b$  inside the integral, and use the duality between $H$ and $L$ to conclude that 
\begin{equation}\label{Hbar-minmax2}
	\begin{aligned}
	\overline{H}(\theta)&=\sup_{\phi\in \Phi}\inf_{g\in \mathcal{D}} \left[\int \d \P_0\phi \bigg( R_{\theta}g +\sup_{b\in B_\phi}\big[\langle \theta+\nabla g, b\rangle_a- L(b,\omega)\big]\bigg)\right]\\&=\sup_{\phi\in \Phi} \inf_{g\in \mathcal{D}} \bigg[\int \d \P_0\phi \big( R_{\theta}g + H(\theta+\nabla g(\omega),\omega)\big)\bigg].	
	\end{aligned}
\end{equation}
In the last equality we used that, for any $\phi\in \Phi$, the set $B_\phi$ defined in \eqref{eq:B-phi-def} contains constants, so that \begin{align*}
	\sup_{b\in B_\phi}\big[\langle \theta+\nabla g, b\rangle_a- L(b,\omega)\big]&=\sup_{y\in \R^d}\big[\langle \theta+\nabla g, y\rangle_a- L(y,\omega)\big] 
	=H(\theta+\nabla g(\omega),\omega).
\end{align*}
\end{proof}
We would like to now swap the order of $\sup_\phi$ and $\inf_g$ in \eqref{Hbar-minmax2}.
\begin{lemma}\label{lemma-minmax2}
With $R_\theta$ defined in \eqref{def-R-theta}, let 
\begin{equation}\label{eq:S-def}
	\mathcal{S}_{\theta}(g)(\omega):=R_{\theta}g(\omega)+H(\theta+\nabla g(\omega),\omega)),\quad g\in \mathcal{D}.
\end{equation}
Then for any $\theta\in \R^d$, 
\begin{equation}\label{eq-minmax2}
\begin{aligned}
\overline H(\theta) &= \sup_{\phi\in \Phi}\,\,\inf_{g\in\mathcal{D}}\,\, \left[\int \d\P_0\phi\, \mathcal{S}_{\theta}(g)\right] \geq \liminf_{\eps\to 0}\,\,\inf_{g\in\mathcal{D}} \bigg[ \eps \log \int \d\P_0\, \exp\big[ \eps^{-1} \mathcal S_{\theta}(g)\big] \bigg].
\end{aligned}
\end{equation}
\end{lemma}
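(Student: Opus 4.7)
The equality $\overline H(\theta) = \sup_{\phi\in\Phi}\inf_{g\in\mathcal{D}}\int \phi\,\mathcal{S}_\theta(g)\,\d\P_0$ is just a rewriting of Lemma \ref{lemma-minmax1}, since by definition $\mathcal{S}_\theta(g)(\omega) = R_\theta g(\omega) + H(\theta+\nabla g(\omega),\omega)$. So only the asserted inequality needs a new argument. The plan is to invoke the Donsker--Varadhan variational representation of the log-moment generating function: for any bounded measurable $F:\Omega_0\to\R$,
\begin{equation*}
\eps \log \int_{\Omega_0} e^{\eps^{-1} F} \,\d\P_0
\;=\; \sup_{\phi \in \Phi} \Big[ \int F\,\phi \,\d\P_0 - \eps\,\mathrm{Ent}(\phi \,|\, \P_0) \Big],
\qquad \mathrm{Ent}(\phi \,|\, \P_0):=\int \phi \log \phi \,\d\P_0 \;\geq\; 0.
\end{equation*}
Applying this to $F=\mathcal{S}_\theta(g)$, which is bounded in $\omega$ for each fixed $g\in\mathcal{D}$ by \ref{f1}--\ref{f2}, and setting $\Psi_\eps(\phi,g):=\int\phi\,\mathcal{S}_\theta(g)\,\d\P_0-\eps\,\mathrm{Ent}(\phi\,|\,\P_0)$, one obtains
\begin{equation*}
\inf_{g\in\mathcal{D}} \eps \log \int e^{\eps^{-1}\mathcal{S}_\theta(g)}\,\d\P_0 \;=\; \inf_{g\in\mathcal D}\,\sup_{\phi\in\Phi}\,\Psi_\eps(\phi,g).
\end{equation*}

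The crux is now to exchange $\inf_g$ and $\sup_\phi$. The functional $\Psi_\eps$ is affine (hence concave) and weakly-$L^1$ upper semicontinuous in $\phi$; in $g$ it is convex, since $R_\theta g$ is linear in $g$ and $g\mapsto H(\theta+\nabla g(\omega),\omega)$ is convex by \ref{f2}, integrated against $\phi\geq 0$. The necessary compactness in $\phi$ is supplied by the entropic penalty: the sublevel sets $\Phi_M:=\{\phi\in\Phi:\mathrm{Ent}(\phi\,|\,\P_0)\leq M\}$ are uniformly integrable by de la Vall\'ee Poussin, hence weakly compact in $L^1(\P_0)$. Since $\mathcal{S}_\theta(g)$ is bounded above, the sup in $\phi$ in $\Psi_\eps$ is effectively attained in $\Phi_M$ for $M$ large. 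Sion's min-max theorem then applies on $\Phi_M\times\mathcal D$, and letting $M\to\infty$ yields the desired swap.

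After the swap one has
\begin{equation*}
\inf_{g\in\mathcal D}\sup_{\phi\in\Phi}\Psi_\eps(\phi,g) \;=\; \sup_{\phi\in\Phi}\Big[\inf_{g\in\mathcal D}\int\phi\,\mathcal{S}_\theta(g)\,\d\P_0 - \eps\,\mathrm{Ent}(\phi\,|\,\P_0)\Big] \;\leq\; \sup_{\phi\in\Phi}\inf_{g\in\mathcal D}\int\phi\,\mathcal{S}_\theta(g)\,\d\P_0 \;=\; \overline H(\theta),
\end{equation*}
where the inequality uses $\mathrm{Ent}(\phi\,|\,\P_0)\geq 0$ to drop the entropy term. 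This bound is uniform in $\eps>0$, so taking $\liminf_{\eps\to 0}$ concludes the proof. The main technical obstacle is the rigorous verification of the min-max exchange: the entropic term must be leveraged carefully to produce the weak compactness needed to apply Sion's theorem, which is precisely the role played by the subtractive relative entropy structure emphasized in the proof sketch of Section \ref{sec-proof-sketch}.
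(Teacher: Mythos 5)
Your proof is correct and follows essentially the same route as the paper: both arguments rest on the Gibbs/Donsker--Varadhan variational formula, the non-negativity of relative entropy, and a minimax swap whose compactness comes from the entropy penalty. The paper runs the steps in the mirror order — starting from $\overline H(\theta)$, subtracting $\eps\,\mathrm{Ent}_{\P_0}(\phi)$ to get a lower bound, swapping $\sup_\phi$ and $\inf_g$, and then solving the inner supremum over $\phi$ explicitly to produce the log-moment generating function — but the content is identical to your Donsker--Varadhan formulation.
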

\begin{proof}
  For any probability density $\varphi \geq 0$ on $\Omega_0$ (i.e., $\int_{\Omega_0} \varphi \d\P_0=1$), let
$$
\mathrm{Ent}_{\P_0}(\varphi)= \int \varphi \log \varphi \,\d\mathbb{P}_0\geq 0
$$ 
be the entropy of $\varphi$. Its non-negativity is a consequence of the Jensen's inequality. Moreover, the map $\varphi \mapsto \mathrm{Ent}_{\P_0}(\varphi)$ is convex, weakly lower semicontinuous and has weakly compact sub-level sets, meaning, for any $\ell>0$, $\{\varphi: \mathrm{Ent}_{\P_0}(\varphi) \leq \ell\}$ is compact in the weak topology.  
Thus, for any $\eps>0$ we have the lower bound 
\begin{equation}\label{eq minmax2.5}
\begin{aligned}
\overline H(\theta) &\geq \sup_{\phi\in \Phi}\inf_{g\in\mathcal{D}}\,\,\bigg[\int \d\P_0\phi\, \mathcal{S}_{\theta}( g) -\eps \mathrm{Ent}_{\mathbb{P}_0}(\phi)\bigg] = \sup_{\phi\in \Phi}\inf_{g\in\mathcal{D}}\,\,\bigg[\int \d\P_0\phi\, (\mathcal{S}_{\theta}(g) -\eps \log \phi)\bigg].\end{aligned}
\end{equation}
 Similarly as in \eqref{eq:eq23}, we use the fact that $g\in \mathcal{D}$ together with the assumptions \ref{f1} to conclude that there is a constant $C_2(\theta,g)$ such that $|R_\theta(g)|\leq C_2(\theta,g)$, so that 
 \begin{equation*}
 \begin{aligned}
 	 \int \d\mathbb{P}_0\phi\mathcal{S}_{\theta}(g)\leq C_2(\theta,g)+ \int \d\P_0\phi  H(\theta+ \nabla g) &\leq C_2(\theta,g)+c_8 \int \d\P_0\phi\| \theta+ \nabla g \|_a^\alpha +c_9  \\
	 &\leq C_2(\theta,g) + c_8 C(\alpha, \theta,g)+ c_9<\infty.
\end{aligned}
 \end{equation*}
 where for the second inequality we used the upper bound from \eqref{eq:H1-H} in \ref{f2} and for the third inequality we used $\langle a(\omega)x,x\rangle \leq c_5|x|^2$ from \ref{f1} and again 
 that $\sup_\omega |\theta+ \nabla g(\omega)| \leq |\theta|+ \|\nabla g\|_{L^\infty(\P_0)}$ for $g\in \mathcal D$. 
 Now, for any fixed $\phi\in \Phi$, the map 
 \begin{equation*}
 	\mathcal D\ni g\mapsto \int \d\P_0\phi\, (\mathcal{S}_{\theta}(g) -\eps \log \phi) \quad\mbox{is convex and continuous,}
 \end{equation*}
  while for any fixed $g\in \mathcal D$, the map 
  \begin{equation*}
  \begin{aligned}
 	\Phi\ni \varphi \mapsto \int \d\P_0\varphi\, (\mathcal{S}_{\theta}(g) -\eps \log \varphi)
    &\quad\mbox{is concave, upper-semicontinuous and} \\
    &\quad\mbox{has compact superlevel sets in the weak} \,\, L^1_+(\mathbb{P}_0)\,\,\mbox{topology.}
    \end{aligned}
 \end{equation*}
Hence, we can again use Von-Neumann's minimax theorem to justify changing the order of $\sup_{\phi}$ and $\inf_{g\in\mathcal{D}}$ in \eqref{eq minmax2.5}, leading to 
 $$
 \overline H(\theta)\geq \inf_{g\in\mathcal{D}} \sup_{\phi}\,\,\bigg[\int \d\P_0\phi\, (\mathcal{S}_{\theta}(g) -\eps \log \phi)\bigg]. $$
 The above variational problem over $\phi$ subject to the condition $\int\phi d\mathbb{P}_0=1$ can be solved explicitly and the maximizing density is
 $$\phi= \frac{\exp[\eps^{-1} \mathcal{S}_{\theta}(g)]}{ \int \d\mathbb{P}_0\exp[\eps^{-1}  \mathcal{S}_{\theta}(g)]}. $$
 We replace this value of $\phi$ in the last lower bound for $\overline H(\theta)$  to obtain  \begin{equation*}
 	\overline{H}(\theta)\geq \,\,\inf_{g\in\mathcal{D}} \bigg[ \eps \log \int \d\P_0\, \exp\big[ \eps^{-1} \mathcal{S}_{\theta}(g)\big] \bigg]. \end{equation*}
 We let $\eps\to 0$, to deduce the lower bound claimed in \eqref{eq-minmax2}.
\end{proof}

 Given the above results, the lower bound in Theorem \ref{thm-lb-geq-ub} will now be a
consequence of the following technical result that will be established in
Section \ref{sec-prop-minmax3}. 
  
\begin{prop}
\label{prop-minmax3}
For any given $\eps>0$, there exists a sequence $\eps_n\to 0$ and a sequence of
functions $(g_n)_n\subset \mathcal D$ so that
\begin{equation}
  \label{eq-minmax3}
  \overline H(\theta) \geq \eps_n \log \E_0\bigg[ \e^{\eps_n^{-1}
    \mathcal{S}_{\theta}(g_n,\cdot)}\bigg]-\eps, 
\end{equation}
and $G_n(\omega):=\nabla g_n$ converges weakly in $L^{1+\delta}(\P_0)$ (with
$\delta>0$ as in \eqref{eq:xi-mom-bound}) and in distribution (along a subsequence) to some
$G$. Furthermore, $G\in \mathcal{G}_{\delta}$, which is defined in Section \ref{classG}. 
\end{prop}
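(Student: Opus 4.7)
The plan is to obtain $g_n \in \mathcal{D}$ as (nearly) optimal test functions in the variational formula of Lemma \ref{lemma-minmax2} and then to control the gradients $G_n := \nabla g_n$ via a Laplace-duality argument. By Lemma \ref{lemma-minmax2}, I can pick $\eps_n \searrow 0$ and $g_n \in \mathcal{D}$ such that $\eps_n \log \E_0\bigl[e^{\eps_n^{-1} \mathcal{S}_\theta(g_n)}\bigr] \le \overline{H}(\theta) + \eps$, which is exactly \eqref{eq-minmax3}. The real content of the proposition is then to show that $(G_n)_n$ is uniformly bounded in $L^{1+\delta}(\P_0)$ and that every weak cluster point $G$ lies in $\mathcal{G}_\delta$.

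To obtain the uniform $L^{1+\delta}$-bound, I would apply the Laplace--Legendre identity $\eps \log \int e^{f/\eps}\,d\P_0 = \sup_{\phi \in \Phi}\bigl[\int f\phi\,d\P_0 - \eps\,\mathrm{Ent}_{\P_0}(\phi)\bigr]$ to the inequality above: testing against any fixed $(b,\phi)\in\mathcal{E}$ and using \eqref{eq:eq19} to eliminate the $R g_n$-term yields
\[
\int \phi\, \mathcal{S}_\theta(g_n)\,d\P_0 = \int \phi\bigl[\tfrac12 \mathrm{div}(a\theta) - \langle b,\nabla g_n\rangle_a + H(\theta + \nabla g_n)\bigr]\,d\P_0.
\]
Combining the coercivity $H(p) \ge c_6\|p\|_a^\alpha - c_7$ from \ref{f2} with the bounded divergence from \ref{f1} and Young's inequality $|\langle b,\nabla g_n\rangle_a|\le \tfrac{c_6}{4}\|\nabla g_n\|_a^\alpha + C\|b\|_a^{\alpha'}$ produces a weighted coercivity bound $\int \phi\, \|\nabla g_n\|_a^\alpha\,d\P_0 \le M$; the ellipticity bound $\xi(\omega)|x|^2 \le \|x\|_a^2$ from \ref{f1} combined with H\"older's inequality and the moment assumption \eqref{eq:xi-mom-bound} at exponent $\chi = \alpha(1+\delta)/(2(\alpha-(1+\delta)))$ then upgrades this to $\sup_n \|G_n\|_{L^{1+\delta}(\P_0)} < \infty$. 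Reflexivity of $L^{1+\delta}$ together with tightness (from the uniform moment bound) furnishes a subsequence along which $G_n \rightharpoonup G$ weakly in $L^{1+\delta}(\P_0)$ and also in distribution.

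It remains to verify the four defining conditions of $\mathcal{G}_\delta$. Property \eqref{eq:unifbound-L-alpha} is just weak lower semicontinuity of the norm. For the essential-supremum bound \eqref{eq:unifbound-div}, I would insert normalized indicator densities $\phi = \1_A/\P_0(A)$ into the Laplace dual, producing $\P_0(A)^{-1}\int_A \mathcal{S}_\theta(g_n)\,d\P_0 + \eps_n\log\P_0(A) \le \overline H(\theta)+\eps$. As $n\to\infty$, the divergence contribution passes to the limit via weak convergence of $G_n$ (linearity), while convexity of $H$ together with Mazur's theorem gives $\int_A H(G+\theta)\,d\P_0 \le \liminf_n \int_A H(G_n+\theta)\,d\P_0$; varying $A$ yields the pointwise essential-supremum inequality. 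The curl-free property \eqref{eq:closed} is inherited by the weak limit since the subspace of curl-free $L^{1+\delta}$-fields on the cluster is convex and strongly closed, hence weakly closed. For the zero-induced-mean condition \eqref{eq:meanzero}, I use the cocycle identity $V_{G_n}(\omega,\mathfrak{v}_e) = g_n(\sigma_e\omega) - g_n(\omega)$, so $\E_0[V_{G_n}(\cdot,\mathfrak{v}_e)] = 0$ follows from $\P_0$-invariance of $\sigma_e$ (Proposition \ref{prop 1}); passing to the limit exploits Proposition \ref{zero expectation}, whose proof uses only \eqref{eq:unifbound-L-alpha} and the curl-free property, not \eqref{eq:meanzero}, so no circularity arises: it provides $|\E_0[V_G(\cdot,\mathfrak{v}_e)]| \le C \|G\|_{L^{1+\delta}(\P_0)}$, making $G \mapsto \E_0[V_G(\cdot,\mathfrak{v}_e)]$ a bounded linear functional on the curl-free $L^{1+\delta}$-subspace, whence weak convergence of $G_n$ forces $\E_0[V_G(\cdot,\mathfrak{v}_e)] = 0$.

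The main obstacle is the joint treatment of \eqref{eq:unifbound-div} and \eqref{eq:meanzero} for the weak limit $G$ in the absence of translation invariance of $\P_0$: the first relies on a careful interplay between weak convergence of $G_n$ and lower semicontinuity of the Hamiltonian term, while the second depends essentially on having Proposition \ref{zero expectation} available as an a priori bound independent of \eqref{eq:meanzero}. Both rest on the percolation-specific induced shift $\sigma_e$ and on Palm-measure calculus rather than on a direct stationary-ergodic identification.
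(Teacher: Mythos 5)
Your architecture parallels the paper's proof (Lemmas \ref{lemma1-lemma-minmax3}--\ref{lemma3-lemma-minmax3}): extract near-optimal $(\eps_n,g_n)$ from Lemma \ref{lemma-minmax2}, obtain a uniform $L^{1+\delta}(\P_0)$ bound on $G_n=\nabla g_n$ from coercivity of $H$ and the moment hypothesis \eqref{eq:xi-mom-bound}, pass to a weak subsequential limit $G$, and then verify the four defining properties of $\mathcal{G}_\delta$. One step needs repair: when you test the Laplace dual against a general pair $(b,\phi)\in\mathcal{E}$, the resulting coercivity estimate is the $\phi$-weighted bound $\int\phi\,\|G_n\|_a^\alpha\,\d\P_0\leq M$, which does \emph{not} upgrade to the unweighted $L^{1+\delta}(\P_0)$ bound unless $\phi$ is bounded away from $0$ $\P_0$-a.s., a property membership in $\Phi$ does not guarantee. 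You should take $\phi\equiv 1$, which pairs with $b\equiv 0$ to give $(0,1)\in\mathcal{E}$; then the entropy term vanishes, the weight disappears, and your step collapses to the paper's (Jensen applied to $\eps_n\log\E_0\big[e^{\eps_n^{-1}\mathcal{S}_\theta(g_n)}\big]$ together with $\E_0[\mathrm{div}(aG_n)]=0$ and H\"older with $\xi^{-\chi}$ as in \eqref{estimate-alpha-delta}).

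Your treatment of \eqref{eq:unifbound-div} and \eqref{eq:meanzero} takes a genuinely different route and is in some respects cleaner. For \eqref{eq:unifbound-div}, the paper bounds $\overline H(\theta)\geq \frac1\lambda\log\E_0\big[e^{\lambda(M\wedge\mathcal{S}_\theta)}\big]-r$, lets $n\to\infty$ using distributional convergence of $G_n$ and continuity of the truncated exponential, then $M\nearrow\infty$ by monotone convergence, then $\lambda\to\infty$ to reach the $\esssup$. Your variant — insert $\phi=\1_A/\P_0(A)$ into the Laplace dual, pass to the limit using weak $L^{1+\delta}$ convergence for the linear part and Mazur/lower semicontinuity for the convex $H$-part, and vary $A$ — stays within the weak $L^{1+\delta}$ framework and dispenses with distributional convergence and the double $M,\lambda$-limit, which is a genuine simplification. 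For \eqref{eq:meanzero}, the paper truncates by $\{\tilde\ell\leq R\}$, passes to the limit term by term along the explicit path decomposition, and controls the tail $\eta_R$ by the exponential bounds underlying Proposition \ref{zero expectation}; your argument — that $G\mapsto\E_0[V_G(\cdot,\mathfrak{v}_e)]$ is a bounded linear functional on the curl-free $L^{1+\delta}$-subspace by Proposition \ref{zero expectation}, so weak convergence transfers the cocycle identity $\E_0[V_{G_n}(\cdot,\mathfrak{v}_e)]=0$ to $G$ — is shorter and conceptually sharper, and your observation that Proposition \ref{zero expectation}'s proof does not invoke \eqref{eq:meanzero} is precisely what breaks the apparent circularity. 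One caveat: your one-line claim that the curl-free subspace of $L^{1+\delta}(\P_0)$ is ``strongly closed, hence weakly closed'' needs justification via a Fubini argument along translated trajectories $x\mapsto G(\tau_x\omega)$, since strong convergence $G_n\to G$ in $L^{1+\delta}(\P_0)$ does not automatically yield a.s.\ convergence of the loop integrals; this is exactly what the paper's Lemma \ref{lemma2-lemma-minmax3} carries out via Fubini and a separability/density argument over closed curves, so you should not present it as immediate.
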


\begin{proof}
[{\bf Proof of Theorem \ref{thm-lb-geq-ub} (assuming Proposition
  \ref{prop-minmax3})}]
By Proposition \ref{prop-minmax3}, for 
$r>0$, we pick some sequence $\eps_n\to 0$ and $g_n\in \mathcal{D}$
satisfying
\begin{equation*}
  \overline H(\theta) \geq \eps_n \log \E_0\big[ \e^{\eps_n^{-1}
    \mathcal{S}_\theta(g_n,\cdot)}\big] - r. 
\end{equation*}
For fixed $n$, the map
$\lambda\in [0,\infty)\to \frac{1}{\lambda}\log \E_0[\e^{\lambda
  \mathcal{S}_\theta(g_n,\cdot)}]$ is increasing, so for each
$\eta, \lambda>0$, if $n$ is large enough,
\begin{align*}
  \overline H(\theta) &\geq \frac{1}{\lambda} \log \E_0\big[ \e^{\lambda
    \mathcal{S}_\theta(g_n,\cdot)}\big] - r \\ 
  &=\frac{1}{\lambda}\log \E_0\bigg[\e^{\lambda
    \big(\frac{1}{2}\mathrm{div}(a(\omega)(G_n(\omega)+\theta))+
    H(\theta+\nabla
    G_n(\omega),\omega)\big)}\bigg]-r. 
\end{align*}
For any $M,\lambda>0$, the map
\[
  x\mapsto \e^{\lambda\left(M\wedge \left(\frac{1}{2}\mathrm{div}
 (a(\omega)(x+\theta))+ H(\theta+ x,\omega)\right)\right)}
\]
is continuous and bounded.  Thus, letting $n\to \infty$ and using the fact
from Proposition \ref{prop-minmax3} that $G_n$ converges to $G\in \mathcal G_\delta$ in distribution, we conclude
from the above bound that
\begin{equation*}
  \overline H(\theta) \geq \frac{1}{\lambda}\log \E_0\bigg[\e^{\lambda \big(M\wedge \big(\frac{1}{2}\mathrm{div}(a(\omega)(G(\omega)+\theta))+ H(\theta+ G(\omega),\omega)\big)\big)}\bigg]-r.
  \end{equation*}
  Now by letting $M\nearrow \infty$ and using monotone convergence theorem, we obtain 
  \begin{equation}\label{eq:eq32}
  \overline H(\theta) \geq  \log \bigg\| \e^{ \frac{1}{2}\mathrm{div}(a(\omega)(G(\omega)+\theta))+ H(\theta+ G(\omega),\omega)} \bigg\|_{L^\lambda(\P_0)}- r.
  \end{equation}
  Finally, letting $\lambda\to \infty$,  we obtain
\begin{equation}
 \label{eq:eq17}
  \begin{aligned}
  \overline H(\theta) &\geq
  \mathrm{ess\,sup}_{\P_0}\bigg[\frac{1}{2}\mathrm{div}(a(G+\theta))+
    H(G+\theta)\bigg]
  -r  
  \\ 
  &\geq \inf_{G\in \mathcal{G}_\delta} \mathrm{ess\,sup}_{\P_0}\bigg[\frac{1}{2}\mathrm{div}(a(G+\theta))+
    H(G+\theta)\bigg] - r
  \\
  &=\overline{\Lambda}(\theta)-r, 
  \end{aligned}
\end{equation}
by \eqref{eq:Lambda_def}. Since $r>0$ is arbitrary, we are done with the proof of  Theorem
\ref{thm-lb-geq-ub}. 
\end{proof}

\subsection{Gradients from entropic coercivity: Proof of Proposition \ref{prop-minmax3}.}
\label{sec-prop-minmax3}
We divide the proof of Proposition \ref{prop-minmax3} into the following three lemmas: 

\begin{lemma}\label{lemma1-lemma-minmax3}
For any given $\eps>0$, there exists a sequence $\eps_n\to 0$ and a sequence of functions $(g_n)_n\subset \mathcal D$ so that \eqref{eq-minmax3} holds, and 
$G_n(\omega):=\nabla g_n(\omega)$ converges weakly in $L^{1+\delta}(\P_0)$ (with $\delta>0$ as in \eqref{eq:xi-mom-bound}) and in distribution along a subsequence 
to some random variable $G\in L^{1+\delta}(\P_0)$. 
\end{lemma}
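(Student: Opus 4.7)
The plan is to extract the required sequence from Lemma \ref{lemma-minmax2}, establish a uniform $L^{1+\delta}(\P_0)$-bound on $G_n := \nabla g_n$, and then obtain the weak and distributional limits by reflexivity and tightness.

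\textbf{Step 1 (extraction and Jensen).} The definition of $\liminf$ in the conclusion of Lemma \ref{lemma-minmax2} immediately furnishes sequences $\eps_n \to 0$ and $g_n \in \mathcal{D}$ satisfying \eqref{eq-minmax3}. Applying Jensen's inequality $\log\E[\e^X]\geq\E[X]$ to the right-hand side of \eqref{eq-minmax3} yields
$$\overline{H}(\theta) + \eps \;\geq\; \int \mathcal{S}_\theta(g_n)\,\d\P_0 \;=\; \int R_\theta g_n\,\d\P_0 \;+\; \int H(\theta + G_n)\,\d\P_0,$$
so combining with the coercive lower bound $H(p)\geq c_6\|p\|_a^\alpha - c_7$ from \ref{f2} reduces everything to an $n$-uniform control of the divergence integral $\int R_\theta g_n\,\d\P_0$.

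\textbf{Step 2 (the divergence term).} I would prove $\int R g_n\,\d\P_0 = 0$. Writing
$$\int R g_n\,\d\P_0 \;=\; \frac{1}{\P(\Omega_0)}\int \1_{\Omega_0}\,\tfrac12\mathrm{div}(a\nabla g_n)\,\d\P,$$
stationarity of $\P$ (with $g_n\in\mathcal D$ bounded and having bounded weak derivatives) gives $\int\mathrm{div}(a\nabla g_n)\,\d\P = 0$, so it suffices to show that the integral over $\Omega_0^c$ vanishes. On $\Omega_0^c$ we have $0\notin\overline{\mathcal C}_\infty(\omega)$; since $\overline{\mathcal C}_\infty(\omega)$ is closed, the map $x\mapsto a(\tau_x\omega)$ vanishes identically on some open neighborhood of $0$, which in view of the definition of the weak derivatives $\nabla_i$ forces $\mathrm{div}(a\nabla g_n)(\omega) = 0$ pointwise on $\Omega_0^c$. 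Using $R_\theta g_n = Rg_n + \tfrac12\mathrm{div}(a)\cdot\theta$ together with $|\mathrm{div}\,a|\leq C$ from \ref{f1}, the Jensen bound in Step 1 then produces the $n$-uniform estimate
$$\int \|G_n + \theta\|_a^\alpha\,\d\P_0 \;\leq\; C(\theta,\eps).$$

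\textbf{Step 3 ($L^{1+\delta}$-bound and passage to limits).} From the degenerate ellipticity \eqref{eq:ellip-bounds}, $|G_n|^{1+\delta}\leq \xi^{-(1+\delta)/2}\|G_n\|_a^{1+\delta}$. H\"older's inequality with conjugate exponents $\alpha/(1+\delta)$ and $\alpha/(\alpha-(1+\delta))$ (admissible since $\alpha>1+\delta$ by \ref{f2}) gives
$$\int |G_n|^{1+\delta}\,\d\P_0 \;\leq\; \Bigl(\int \|G_n\|_a^\alpha\,\d\P_0\Bigr)^{\!(1+\delta)/\alpha}\,\Bigl(\int \xi^{-\chi}\,\d\P_0\Bigr)^{\!(\alpha-(1+\delta))/\alpha},$$
with $\chi = \alpha(1+\delta)/[2(\alpha-(1+\delta))]$, which is exactly the exponent appearing in the moment assumption \eqref{eq:xi-mom-bound}; both factors are therefore finite. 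Hence $(G_n)$ is bounded in $L^{1+\delta}(\P_0;\R^d)$. Reflexivity together with the Eberlein--\v{S}mulian theorem extracts a subsequence converging weakly in $L^{1+\delta}(\P_0)$ to some $G$, and the same bound makes the laws $(G_n)_\ast\P_0$ uniformly integrable (hence tight) on $\R^d$, so a further sub-subsequence converges in distribution to $G$. The one non-routine ingredient is the vanishing $\int Rg_n\,\d\P_0 = 0$ in Step 2: because $\P_0$ is not translation-invariant, the stationary integration-by-parts identity is not directly at one's disposal, and the spatial support property $\supp(a)\subset\overline{\mathcal C}_\infty$ from \ref{f1} is precisely what restores the cancellation needed to close the a priori estimate.
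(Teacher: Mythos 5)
Your proof is correct and follows the same route as the paper: the extraction of $(\eps_n,g_n)$ from Lemma \ref{lemma-minmax2}, the Jensen-plus-coercivity argument, and the H\"older estimate with exponent $\chi=\frac{\alpha(1+\delta)}{2(\alpha-(1+\delta))}$ from \eqref{eq:xi-mom-bound} all match \eqref{estimate-alpha-delta} and its surroundings. Your Step~2 is a welcome addition: the paper simply asserts $\E_0[\mathrm{div}(aG_n)]=0$, and your argument via stationarity of $\P$ together with $\supp(a)\subset\overline{\mathcal C}_\infty$ is exactly the mechanism that justifies it. One small imprecision to note there: $0\notin\overline{\mathcal C}_\infty(\omega)$ does not hold for \emph{every} $\omega\in\Omega_0^c$, since $\Omega_0^c$ contains the boundary event $\{0\in\partial\mathcal C_\infty\}$; that event is $\P$-null (because $\partial\mathcal C_\infty$ has Lebesgue measure zero, and $\P$ is translation-invariant), which is all that the integral requires, so the argument goes through, but you should say ``$\P$-almost surely on $\Omega_0^c$'' rather than ``on $\Omega_0^c$.''
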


\begin{lemma}\label{lemma2-lemma-minmax3}
The limit $G$ of $G_n$ from Lemma \ref{lemma1-lemma-minmax3} satisfies the closed loop condition defined in \eqref{eq:closed}, i.e., for any simple closed path $\mathrm C$ contained in the infinite cluster $\mathcal C_\infty$, we have $\int_{\mathrm C} G(\cdot,\omega)\cdot \d r=0$, almost surely w.r.t. $\P_0$. 
\end{lemma}

\begin{lemma}\label{lemma3-lemma-minmax3}
The limit $G$ of $G_n$ from Lemma \ref{lemma1-lemma-minmax3} belongs to the class $\mathcal G_{\delta}$ from Definition \ref{def-class-G}.
\end{lemma}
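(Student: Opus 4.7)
The plan is as follows. By Lemmas~\ref{lemma1-lemma-minmax3} and~\ref{lemma2-lemma-minmax3}, the limit $G$ already lies in $L^{1+\delta}(\P_0)$ and is curl-free on the cluster, so \eqref{eq:unifbound-L-alpha} and \eqref{eq:closed} hold; only the essential supremum bound \eqref{eq:unifbound-div} and the zero induced mean condition \eqref{eq:meanzero} remain.

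For \eqref{eq:unifbound-div}, I would reproduce the passage \eqref{eq:eq32}--\eqref{eq:eq17} of the proof of Theorem~\ref{thm-lb-geq-ub}. Starting from \eqref{eq1-lemma1} and the monotonicity $\lambda\mapsto\lambda^{-1}\log\E_0[\exp(\lambda\,\cdot)]$, for each $M,\lambda>0$ the truncated integrand $x\mapsto M\wedge(\tfrac12\mathrm{div}(a(\omega)(x+\theta))+H(x+\theta,\omega))$ is bounded and continuous in $x$; combining this with the in-distribution convergence $G_n\Rightarrow G$ from Lemma~\ref{lemma1-lemma-minmax3} yields, at each fixed $M,\lambda$,
\begin{equation*}
  \overline H(\theta)+\eps\;\geq\;\tfrac{1}{\lambda}\log \E_0\Big[\exp\Big(\lambda\big[M\wedge\big(\tfrac{1}{2}\mathrm{div}(a(G+\theta))+H(G+\theta)\big)\big]\Big)\Big].
\end{equation*}
Letting $\lambda\to\infty$ turns the right-hand side into $\esssup_{\P_0}$ of the truncated quantity, and then monotone convergence as $M\to\infty$ delivers \eqref{eq:unifbound-div} (in the trivial case $\overline H(\theta)=\infty$ there is nothing to verify, as the output of Proposition~\ref{prop-minmax3} is then vacuous).

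For \eqref{eq:meanzero}, the starting observation is that each $g_n\in \mathcal D$ is a genuine scalar function on $\Omega_0$, so the corrector built from $G_n=\nabla g_n$ is an exact coboundary,
\begin{equation*}
  V_{G_n}(\omega,x)=g_n(\tau_x\omega)-g_n(\omega)\qquad\forall\, x\in \mathcal C_\infty(\omega).
\end{equation*}
Evaluating at $x=\mathfrak v_e(\omega)$ and using that the induced shift $\sigma_e\omega=\tau_{\mathfrak v_e(\omega)}\omega$ preserves $\P_0$ (Proposition~\ref{prop 1}) gives $\E_0[V_{G_n}(\cdot,\mathfrak v_e)]=0$ for every $n$. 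The task reduces to interchanging $\lim_n$ with $\E_0[\cdot]$; this is the main obstacle, because $\mathfrak v_e$ is random and any path from $0$ to $\mathfrak v_e$ inside $\mathcal C_\infty$ depends measurably on $\omega$, so weak $L^{1+\delta}$ convergence cannot be applied to $V_{G_n}(\cdot,\mathfrak v_e)$ directly.

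To resolve this I would re-use the decomposition of Proposition~\ref{zero expectation}: on the event $\{\tilde\ell=j\}$ one can select, measurably in $\omega$ (e.g.\ lexicographically), a nearest-neighbour path $\pi=\pi(\omega)$ on $N_j=\tfrac{1}{2}\Z\cap [-j,j]^d$ of length at most $3^d j$ whose linear interpolation lies in $\mathcal C_\infty$. Curl-freeness of $G_n$ (trivially) and of $G$ (Lemma~\ref{lemma2-lemma-minmax3}) allows $V_{G_n}$ and $V_G$ to be computed along the same path; setting $B_{\pi,j}:=\{\tilde\ell=j,\,\pi(\omega)=\pi\}$ one obtains the decomposition
\begin{equation*}
  \E_0\big[V_G(\cdot,\mathfrak v_e)\big]=\sum_{j\geq 1}\sum_{\pi}\sum_{(x_i,e_i)\in\pi}\int_0^{1/2}\E_0\big[\1_{B_{\pi,j}}(\omega)\,G(\tau_{x_i+te_i}\omega)\cdot e_i\big]\,\d t,
\end{equation*}
and the analogous identity for $G_n$. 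After a change of variable based on the stationarity of $\P$, each inner integrand is a continuous linear functional on $L^{1+\delta}(\P_0)$ evaluated at $G$ (the ``test function'' being a bounded indicator), so the weak convergence $G_n\rightharpoonup G$ from Lemma~\ref{lemma1-lemma-minmax3} provides termwise convergence for every fixed $(\pi,j)$. The bookkeeping of Proposition~\ref{zero expectation} together with the exponential tail bound \eqref{eq:ell-tilde-exp-mom} and $\sup_n\|G_n\|_{L^{1+\delta}(\P_0)}<\infty$ renders the $j$-tail of the series uniformly negligible in $n$, and altogether $\E_0[V_G(\cdot,\mathfrak v_e)]=\lim_n\E_0[V_{G_n}(\cdot,\mathfrak v_e)]=0$.
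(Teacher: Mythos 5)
Your proposal is correct and follows essentially the same route as the paper. All four items match: you treat \eqref{eq:unifbound-L-alpha} and \eqref{eq:closed} as already delivered by Lemmas \ref{lemma1-lemma-minmax3} and \ref{lemma2-lemma-minmax3}; you obtain \eqref{eq:unifbound-div} by redoing the truncation/monotone-convergence/$L^\lambda$-limit argument of \eqref{eq:eq32}--\eqref{eq:eq17} (the paper just cites that passage); and for \eqref{eq:meanzero} you use exactly the paper's ingredients --- the coboundary identity $V_{G_n}(\omega,x)=g_n(\tau_x\omega)-g_n(\omega)$ plus $\sigma_e$-invariance of $\P_0$ to get $\E_0[V_{G_n}(\cdot,\mathfrak v_e)]=0$, the $\tilde\ell$-path decomposition from Proposition \ref{zero expectation}, weak $L^{1+\delta}(\P_0)$ convergence for the termwise limits (after the stationarity change of variable), and the exponential tail \eqref{eq:ell-tilde-exp-mom} combined with $\sup_n\|G_n\|_{L^{1+\delta}(\P_0)}<\infty$ to control the remainder. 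The only cosmetic difference is that the paper packages the limit interchange via the auxiliary truncated quantity $\eta_R=\E_0[V_G(\cdot,\mathfrak v_e),\tilde\ell\le R]$ and shows $\eta_R\to 0$, whereas you present it directly as a dominated-convergence argument with uniform-in-$n$ tail control; these are the same mechanism.
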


\medskip 

\noindent We will now prove the three lemmas stated above.

 \begin{proof}[{\bf Proof of Lemma \ref{lemma1-lemma-minmax3}}]
 We start with the bound \eqref{eq-minmax2} in Lemma \ref{lemma-minmax2} which implies that there exist sequences $\eps_n\to 0$ and $(g_n)_n\subset \mathcal{D}$ satisfying \begin{equation}\label{eq1-lemma1}
 	\eps_n \log \E_0\big[ \e^{\eps_n^{-1} \mathcal{S}_{\theta}(g_n,\cdot)}\big] \leq \overline H(\theta).
 \end{equation}
 Using this we will first show that
 \begin{equation}\label{eq-alpha}
 \sup_n\|G_n\|_{L^{1+\delta}(\Omega_0)}<\infty.
 \end{equation}
 In particular, the above bound will imply that $G_n$ converges weakly in $L^{1+\delta}(\P_0)$ along a subsequence to some $G$. 
 
We now prove \eqref{eq-alpha}. Note that the map $\lambda\in [0,\infty)\mapsto \frac{1}{\lambda}\log \E_0[\e^{\lambda\mathcal{S}_{\theta}(g_n,\cdot)}]$ is increasing. Thus, recalling the definition of $\mathcal{S}_\theta$ from \eqref{eq:S-def} and using \eqref{eq1-lemma1}, we obtain
 that  for $n$ large enough, 
  \begin{equation*}
 	\log \E_0\left[\e^{R_\theta g_n(\omega)+H(\theta+\nabla g_n(\omega),\omega))}\right]\leq  \overline H(\theta).
 \end{equation*}
The lower bound on $H(\cdot,\omega)$ from \ref{f2} implies 
\begin{equation*}
	\log \E_0\left[\e^{R_\theta g_n(\omega)+c_6\|\theta+\nabla g_n\|_a^{\alpha}-c_7}\right]\leq \overline H(\theta). 
\end{equation*} 
Set $G_n:=\nabla g_n$. Then Jensen's inequality applied to the bound bound and the definition of $R_\theta g_n=\frac 12 \mathrm{div}(a(\nabla g_n+ \theta))$ from \eqref{def-R-theta} leads to 
 \begin{equation*}
	\E_0\left[\frac{1}{2}\mathrm{div}(a(G_n+\theta))+c_6\|\theta+G_n\|_a^\alpha\right]\leq \overline H(\theta) + c_7.
\end{equation*}
Since $G_n=\nabla g_n$, we have $\E_0\left[\mathrm{div}(aG_n)\right]=0$. Thus by \ref{f1} we conclude that for some constant $C=C(\theta,\eta)$, 
$$
\sup_n \E_0[\|G_n\|_a^{\alpha}]\leq C, \qquad\qquad \alpha>1.
$$
But by \eqref{eq:ellip-bounds},
\begin{equation*}
	\|G_n\|_a^\alpha=\langle a(\omega),G_n,G_n\rangle^{\alpha/2}\geq \xi(\omega)^{\alpha/2}|G_n|^\alpha.
\end{equation*}
Combining the last two displays, we have 
\begin{equation}\label{boundGna} 
\sup_n \E_0\big[\xi(\omega)^{\alpha/2}|G_n|^\alpha\big] \leq C.
\end{equation}
Hence, 
\begin{equation}\label{estimate-alpha-delta}
\begin{aligned}
	\E_0[|G_n|^{1+\delta}]&=\E_0\Big[|G_n|^{1+\delta}\xi(\omega)^{\frac{1+\delta}{2}}\xi(\omega)^{-\frac{1+\delta}{2}}\Big]\\
	&\leq \E_0\Big[|G_n|^\alpha\xi(\omega)^{\alpha/2}\Big]^{\frac{1+\delta}{\alpha}}\E_0\bigg[\xi(\omega)^{-\frac{\alpha(1+\delta)}{2(\alpha-1-\delta)}}\bigg]^{\frac{\alpha-1-\delta}{\alpha}}<\infty.
\end{aligned}
\end{equation}
In the first upper bound we used H\"{o}lder's inequality with exponents $\frac{\alpha}{1+\delta}>1$ (recall that $\alpha>1+\delta$) and $\frac{\alpha}{\alpha-1-\delta}$, 
and for the second bound we invoked \eqref{boundGna} and \eqref{eq:xi-mom-bound} with $\chi= \frac\alpha 2 \frac{1+\delta}{\alpha-(1+\delta)}$. Hence, 
 \begin{equation*}
	\sup_n \E_0[|G_n|^{1+\delta}]<\infty,
\end{equation*}
 with $\delta>0$. Consequently, $G_n$ converges weakly in $L^{1+\delta}(\P_0)$ and in distribution along a subsequence to some random variable $G\in L^{1+\delta}(\P_0)$, as claimed. 
\end{proof}

\medskip

\begin{proof}[{\bf Proof of Lemma \ref{lemma2-lemma-minmax3}}.]
We first assert that it suffices to prove that for any measurable set $A\subset \Omega_0$, \begin{equation}\label{eq:closed_loop_to_show}
	\E_0\bigg[\1_{A\cap (\mathrm C\subset{\mathcal C_\infty}) }\int_{\mathrm C} G(\cdot,\omega)\cdot \d r\bigg]=0~\text{ for each simple closed path }\mathrm C\subset \mathcal C_\infty.
\end{equation}
Indeed, since \eqref{eq:closed_loop_to_show} holds for any arbitrary measurable set $A\subset\Omega_0$, it in particular implies  that, for any fixed simple closed path $\mathrm C \subset \mathcal C_\infty$, $\P_0$-a.s., $\1_{C\subset {\mathcal C_\infty}}\int_C G(\cdot,\omega)\cdot \d r=0$. We want to show that this holds $\P_0$-a.s. uniformly on each closed loop. Since line integrals are independent of the parametrization of the path, for each $\mathrm C$, we choose any (but fixed from now) smooth function 
$$
f_{\mathrm C}:[0,1]\to \R^d\qquad\mbox{ satisfying }\,\, f(0)=f(1).
$$ The space 
$$
X:=\big\{f\in C^{\infty}[0,1]:f(0)=f(1)\big\}
$$
 is separable under the $|\cdot|_\infty$ norm, so there exists some countable dense subset $Y\subset X$. If \eqref{eq:closed_loop_to_show} holds, we can show that $\P_0$-a-s., the closed loop condition holds for each curve $\mathrm C$ such that $f_{\mathrm C}\in Y$. To extend this to all simple closed curves in $\R^d$, we can approximate each curve by a sequence $\mathrm C_n$ such that $f_{\mathrm C_n}\in Y$. Since the convergence is uniform, it is easy to deduce that $\P_0$-a.s., $\1_{\mathrm C\subset {\mathcal C_\infty}}\int_{\mathrm C} G(\cdot,\omega)\cdot r=0$ for any simple closed curve $\mathrm C$. Thus, we only need to show that \eqref{eq:closed_loop_to_show} holds for fixed $A\subset \Omega_0$ and simple closed curve $\mathrm C$.

Let $f:[0,1]\to \R^d$ be any smooth function that parametrizes $\mathrm C$. For each fixed $n\in \N$, we know that $G_n=\nabla g_n$ satisfies the closed loop condition (because it is a gradient). By Fubini's theorem we have 
\begin{align*}
	0=\E_0\bigg[\1_{A\cap (\mathrm C\subset{\mathcal C_\infty}) }\int_{\mathrm C} G_n(\cdot,\omega)\cdot \d r\bigg]&=\E_0\bigg[\1_{A\cap (\mathrm C\subset{\mathcal C_\infty}) }\int_{0}^1 G_n(f(x),\omega)\cdot f^\prime(x)\d x\bigg]\\
	&=\int_{0}^1f^\prime(x)\cdot \E_0\bigg[\1_{A\cap (\mathrm C\subset {\mathcal C_\infty})}G_n(f(x),\omega)\bigg]\d x.
\end{align*}
Since $G_n$ converges weakly to $G$ in $L^{1+\delta}(\P_0)$ (as shown in Lemma \ref{lemma1-lemma-minmax3}), for fixed $x\in [0,1]$,\begin{equation*}
	\lim_{n\to\infty}\E_0\left[\1_{A\cap (\mathrm C\subset {\mathcal C_\infty})}G_n(f(x),\omega)\right]=\E_0\left[\1_{A\cap (\mathrm C\subset {\mathcal C_\infty})}G(f(x),\omega)\right].
\end{equation*}
Using that $\sup_n\E_0[G_n^{1+\delta}]<\infty$, and that $f'$ is bounded on
$[0,1]$, we can apply dominated convergence theorem to conclude that 
\begin{equation*}
  0=\lim_{n\to\infty}\E_0\left[\1_{A\cap (\mathrm C\subset {\mathcal C_\infty})}G_n(f(x),\omega)\right]=\int_{0}^{1}f'(x)\cdot \E_0\left[\1_{A\cap (\mathrm C\subset {\mathcal C_\infty})}G(f(x),\omega)\right]\d x.
\end{equation*} 
As $G\in L^{1+\delta}(\P_0)$, we can again exchange the order of integration
using Fubini's theorem so the right-hand side in the last display is
$\E_0\left[\1_{A\cap (\mathrm C\subset{\mathcal C_\infty}) }\int_{\mathrm C}
  G(\cdot,\omega)\cdot \d r\right]$. This shows \eqref{eq:closed_loop_to_show}
and concludes the proof of the lemma.
\end{proof}

To complete the proof of Proposition \ref{prop-minmax3}, it remains to prove Lemma \ref{lemma3-lemma-minmax3}. Its proof will require two further technical estimates, Lemma \ref{lemma 4.4} and Proposition \ref{zero expectation}, stated below. Using these two facts, we will complete the proof of Lemma \ref{lemma3-lemma-minmax3}, and therefore that of Proposition \ref{prop-minmax3}.

\begin{lemma}
	\label{lemma 4.4}
	Let $\ell=\ell(\omega)=\d_\omega(0,\mathfrak v_e(\omega))$ be the graph distance between $0$ and $\mathfrak v_e=n(\omega,e)e$. Then there exist constants $a$, $C>0$ such that for any $t>0$,
	\begin{equation}\label{eq:sup_prob-exp-decay}
		\P_0\bigg(\sup_{0\leq s\leq n_{1}(\omega)}\1\{se\in {\mathcal C_\infty}(\omega)\}\d_{\omega}(0,se)>t\bigg)\leq C \e^{-at}.
	\end{equation}
In particular,
	$\P_0(\ell>t)<C\e^{-at}$. 
\end{lemma}
\begin{proof}

Let $\eps>0$. For $t>0$ we write $t_\eps:=\lfloor \eps t\rfloor$. Then
\begin{align*}
		\P_0\left(\sup_{0\leq s\leq n_{1}}\1\{se\in {\mathcal C_\infty}(\omega)\}\d_{\omega}(0,se)>t\right)&\leq \P_0\left(n_1(\omega)\geq t\eps\right)+\P_0\left(\sup_{0\leq s\leq t\eps}\1\{se\in {\mathcal C_\infty}(\omega)\} \d_{\omega}(0,se)>t\right).
	\end{align*}
	By \ref{assump:exp-dec-dist-indshift}, the claim follows once we prove that the second term goes to zero at an exponential rate. This probability is bounded above by $\sum_{i=1}^{\lfloor t\eps \rfloor}\P_0\left(\sup_{i-1\leq s\leq i}\1\{se\in {\mathcal C_\infty}(\omega)\}\d_{\omega}(0,se)>t\right)$. Since the number of summands is growing only polynomially in $t$, it suffices to show that each summand there decays exponentially in $t$. We will proceed as follows:
	
	We define 
	$$m:=\min\{l\in\N:l>t_\eps,\,-le\in{\mathcal C_\infty}\}, \quad 
	A_{x,y}=\{\d_\omega(x,y)\geq t/2,\,x,y\in{\mathcal C_\infty}\}.
	$$
	Now we observe that on the event $\{\sup_{i-1\leq s\leq i}\1\{se\in {\mathcal C_\infty}(\omega)\}\d_{\omega}(0,se)>t\}$, one of the following cases must hold:
	
	\begin{itemize}
		\item $m>2t_\eps$, or 
		\item at least one of the points $le$ with $l\in\mathbb Z$ and $|l|\leq 2t_\eps$ is in ${\mathcal C_\infty}$ and for some $i-1\leq s\leq i$, $\max\{\d_\omega(0,-le), \d_\omega(-le,se)\}\geq t/2$.
	\end{itemize} 
	In the first of the two cases above we have $|\mathfrak v_{-e}\circ\sigma_{-e}^m|>t_\eps$ for at least one $m=1,...,t_\eps$. Hence,
 
	\begin{align*}
		\P_0\big(\sup_{i-1\leq s\leq i}\1\big\{se\in {\mathcal C_\infty}(\omega)\big\}\d_{\omega}(0,se)>t\big)&\leq \sum_{m=1}^{t_\eps}\P_0\big(\sigma_{-e}^m(\{|\mathfrak v_{-e}|\geq t_\eps\})\big) \\
        &\quad +\sum_{\ell =t_\eps}^{2t_\eps}\P_0\big(\exists i-1\leq s\leq i: A_{0,-le}\cup A_{-le,se}\big).\\
	\end{align*}
	By \ref{assump:exp-dec-dist-indshift}, the probabilities of the events in the first sum  are equal and exponentially small.
	 The second sum is bounded by \begin{equation*}
		t_\eps\P_0(A_{0,-le})+\sum_{\ell =t_\eps}^{2t_\eps}\P_0\left(\exists i-1\leq s\leq i:  A_{-le,se}\right).
	\end{equation*}
	To bound the first term, we use \eqref{eq:multidim-campbell-eq} and \ref{assump:chem-dist} to obtain the bound \begin{align*}
		\P_0(A_{0,-le})&\leq \frac{1}{\P(0\in {\mathcal C_\infty})}\P\bigg(\exists x\neq y\in \mathcal C_\infty(\omega): |x|\leq \frac{1}{2}, |y-le|\leq \frac{1}{2}, \d_\omega(x,y)\geq \frac{t}{2}+1;0,x,y\in {\mathcal C_\infty}\bigg)\\
		&\leq \frac{1}{\P(0\in {\mathcal C_\infty})}\E\bigg[\sum_{x,y\in \omega}^{\neq}\1\bigg\{|x|\leq 1/2, |y-le|\leq 1/2, \d_\omega(x,y)\geq t/2+1;0,x,y\in {\mathcal C_\infty}\bigg\}\bigg]\\
		&=\frac{\zeta^2}{\P(0\in {\mathcal C_\infty})}\int_{[-1/2,1/2]^d}\int_{[le-1/2,le+1/2]^d}\P^{x,y}\bigg( \d_\omega(x,y)\geq \frac{t}{2}+1;0,x,y\in {\mathcal C_\infty}\bigg)\d x\d y\\
		&\leq C\e^{-C't_\eps}
	\end{align*}
	for some constants $C,C'>0$ which are independent of $l$, and for $\eps>0$ small enough, with $\zeta$ defined in \ref{assump:intensity}. 
	Following the same calculations as in the last display, we can also show that \begin{equation*}
		\P_0\left(\exists i-1\leq s\leq i:  A_{-le,se}\right)\leq C\e^{-C't_\eps}
	\end{equation*}
	for constants $C,C'>0$ independent of $i$ and $l$, for $\eps>0$ small enough.
	After estimating the probabilities of all events by an exponential upper bound, from the unions we get another factor that is linear in $t$, which can be absorbed by the exponential bound for $t$ large enough. Thus the proof of Lemma \ref{lemma 4.4} is complete. 
	
\end{proof}

\begin{prop}
\label{zero expectation}
For any unit coordinate vector $e$, recall that we denote the first successive
arrival in direction $e$ by $\mathfrak v_e=\mathfrak
v_e(\omega)=n(\omega,e)e$. Then for any $G\in \mathcal{G}_\delta$, we have
$\bE_0|V_G(\mathfrak v_e,\cdot)|<\infty$. More precisely, there is a constant
$C=C(d,\delta,\P_0)$ such that for any $G\in \mathcal{G}_\delta$,
$\E_0|V_G(\mathfrak v_e,\cdot)|\leq C\|G\|_{L^{1+\delta}(\P_0)}$.
\end{prop}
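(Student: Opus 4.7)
The plan is to dominate $|V_G(\mathfrak{v}_e,\omega)|$ by a $d$-dimensional Lebesgue integral of $|G|$ over a random tube $T_\omega\subset\mathcal{C}_\infty(\omega)$ that surrounds a shortest path from $0$ to $\mathfrak{v}_e$, and then to convert this into an $L^{1+\delta}(\P_0)$-bound by combining H\"older with the stationarity of $z\mapsto G(\tau_z\omega)$ under $\P$, layering over the value of $|\mathfrak{v}_e|$ to exploit the exponential tail \eqref{eq:exp-dec-dist-indshift}. The curl-free property \eqref{eq:closed} is what enables the one-dimensional path integral $\int_{0\leadsto\mathfrak{v}_e}G\cdot\d\mathbf{r}$ to be thickened into a $d$-dimensional one amenable to stationarity arguments, despite the fact that $G$ is only controlled in $L^{1+\delta}(\P_0)$.

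\textbf{Key spatial estimate.} Pick a geodesic $\gamma_\omega$ from $0$ to $\mathfrak{v}_e$ in $\mathcal{C}_\infty(\omega)$ and cover it by overlapping balls $B_{1/2}(p_0),\ldots,B_{1/2}(p_N)$ centered at $p_i\in\omega\cap\mathcal{C}_\infty(\omega)$ with $|p_{i+1}-p_i|<1$. For each adjacent pair, a standard tube-averaging argument exploiting \eqref{eq:closed} (reroute the segment $[p_i,p_{i+1}]$ through perturbations lying in the overlap $B_{1/2}(p_i)\cap B_{1/2}(p_{i+1})$, average the resulting line integrals over those perturbations, and apply Fubini) produces a local estimate
\[
|V_G(p_{i+1},\omega)-V_G(p_i,\omega)|\;\leq\;C\!\int_{B_1(p_i)}|G(\omega,z)|\,dz.
\]
Telescoping, and handling the at-most-unit-length endpoint contributions the same way, yields $|V_G(\mathfrak{v}_e,\omega)|\le C\int_{T_\omega}|G(\omega,z)|\,dz$ with $T_\omega:=\bigcup_{i=0}^{N}B_1(p_i)$. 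On the typical event of \ref{assump:chem-dist} one has $|T_\omega|\lesssim N\lesssim \d_\omega(0,\mathfrak{v}_e)\lesssim|\mathfrak{v}_e|$, and $T_\omega\subset B_R(0)$ with $R=C|\mathfrak{v}_e|+1$.

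\textbf{H\"older and stationarity.} Applying H\"older in the spatial integral followed by H\"older on $\P_0$, together with the decomposition $\E_0|V_G(\mathfrak{v}_e,\cdot)|=\sum_{k\ge 1}\E_0[|V_G(\mathfrak{v}_e,\cdot)|\mathbf{1}_{\{|\mathfrak{v}_e|=k\}}]$, yields
\[
\E_0\bigl[|V_G(\mathfrak{v}_e,\cdot)|\,\mathbf{1}_{\{|\mathfrak{v}_e|=k\}}\bigr]\;\lesssim\;k^{\delta/(1+\delta)}\Bigl(\E_0\!\int_{B_{Ck}(0)}|G(\omega,z)|^{1+\delta}\mathbf{1}_{z\in\mathcal{C}_\infty(\omega)}\,dz\Bigr)^{1/(1+\delta)}\P_0(|\mathfrak{v}_e|=k)^{\delta/(1+\delta)}.
\]
For every fixed $z\in\R^d$, the identity $\mathbf{1}_{z\in\mathcal{C}_\infty(\omega)}=\mathbf{1}_{\tau_z\omega\in\Omega_0}$ together with the $\{\tau_x\}$-stationarity of $\P$ and the definition $\E_0(\cdot)=\P(\Omega_0)^{-1}\E[\,\cdot\,\mathbf{1}_{\Omega_0}]$ yields $\E_0[|G(\omega,z)|^{1+\delta}\mathbf{1}_{z\in\mathcal{C}_\infty(\omega)}]\le\E_0|G|^{1+\delta}$, and Fubini then gives $\E_0\int_{B_{Ck}(0)}|G|^{1+\delta}\mathbf{1}_{\mathcal{C}_\infty}\lesssim k^d\,\E_0|G|^{1+\delta}$. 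Inserting this together with \eqref{eq:exp-dec-dist-indshift} produces the absolutely convergent series $\sum_k k^{(\delta+d)/(1+\delta)}e^{-ck\delta/(1+\delta)}$, and hence $\E_0|V_G(\mathfrak{v}_e,\cdot)|\le C\|G\|_{L^{1+\delta}(\P_0)}$. The contribution of the exceptional chemical-distance event from \eqref{eq:chem_dist_ineq} is controlled by the same layered decomposition after converting the $\P^{(0,ke)}$-bound into a $\P_0$-bound via Palm calculus (using \ref{assump:intensity}--\ref{assump:inf-comp} to control the ratio).

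\textbf{Main obstacle.} The principal difficulty is precisely the passage from the one-dimensional path integral $\int_{\gamma_\omega}|G|\,ds$ to a $d$-dimensional Lebesgue integral amenable to stationarity, since $\gamma_\omega$ and the tube $T_\omega$ both depend on $\omega$ in a complicated way and \eqref{eq:refined-camp-decomp} cannot be applied directly along a random curve. The curl-free averaging over the overlapping $1/2$-balls is the mechanism that closes this gap, while controlling the random tube volume under $\E_0$ requires the simultaneous use of \ref{assump:chem-dist} (path length comparable to $|\mathfrak{v}_e|$) and \ref{assump:exp-dec-dist-indshift} (exponential tails of $|\mathfrak{v}_e|$).
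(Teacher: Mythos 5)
The key step in your proposal---the ``tube-averaging'' inequality
\[
|V_G(p_{i+1},\omega)-V_G(p_i,\omega)|\ \leq\ C\int_{B_1(p_i)}|G(\omega,z)|\,dz,
\]
does not in fact follow from curl-freeness, and this is where the argument breaks. Any family of rerouted paths from $p_i$ to $p_{i+1}$ (say the two-segment paths $p_i\to w\to p_{i+1}$ with $w$ ranging over a small ball in $B_{1/2}(p_i)\cap B_{1/2}(p_{i+1})$) necessarily collapses onto the fixed endpoints. Carrying out the Fubini change of variables $z=p_i+t(w-p_i)$, whose Jacobian for fixed $t$ is $t^d$, one finds that each point $z$ near $p_i$ is swept only for parameters $t\gtrsim|z-p_i|$, so the averaging actually produces a \emph{weighted} bound
\[
|V_G(p_{i+1},\omega)-V_G(p_i,\omega)|\ \lesssim\ \int_{B_1(p_i)}|G(\omega,z)|\,\bigl(|z-p_i|^{-(d-1)}+|z-p_{i+1}|^{-(d-1)}\bigr)\,dz,
\]
with apex singularities at the two endpoints. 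Applying H\"older with exponent $1+\delta$ then requires $\||z|^{-(d-1)}\|_{L^{(1+\delta)/\delta}(B_1)}<\infty$, i.e.\ $(d-1)(1+\delta)<d\delta$, i.e.\ $\delta>d-1$. Since the class $\mathcal G_\delta$ allows arbitrarily small $\delta>0$, your deterministic estimate, and hence the telescoped bound $|V_G(\mathfrak v_e,\omega)|\le C\int_{T_\omega}|G|\,dz$, fails in the regime the proposition is designed to cover. (Even passing $\E_0$ inside and using Campbell's formula does not rescue the singular weight, because the nearest point $p_i\in\omega$ to a fixed $z$ has $\E[|z-p_i|^{-(d-1)q}]=\infty$ for $q=(1+\delta)/\delta$ whenever $\delta\leq d-1$.)

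The paper's argument avoids any pointwise-in-$\omega$ $d$-dimensional estimate on $|V_G|$. It replaces the chemical distance $\d_\omega$ by a ball-chain distance $\tilde{\d}_\omega$, routes the path through the half-integer lattice $N_j=\tfrac12\Z^d\cap[-j,j]^d$, and keeps each step as a genuine one-dimensional line integral $\int_0^{1/2}|G(\tau_{x+te}\omega)|\,dt$ over a fixed, unit-length lattice segment. The H\"older inequality is then applied under $\E_0$ (in the joint $\d t\otimes\d\P_0$ variable), and $\tau_x$-stationarity of $\P$ gives $\E_0[|G(\tau_y\omega)|^{1+\delta},\,y\in\mathcal C_\infty]\leq\E_0[|G|^{1+\delta}]$ uniformly in $y$, so no singular weight ever appears. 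Your subsequent bookkeeping---H\"older, Fubini, stationarity, and layering over $\{|\mathfrak v_e|=k\}$ with the exponential tails of \ref{assump:chem-dist} and \ref{assump:exp-dec-dist-indshift}---mirrors the paper's layering over $\{\tilde\ell=j\}$ and would go through once the tube estimate is replaced by the paper's segment-by-segment one-dimensional estimate on the grid.
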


\begin{proof}
Let $\mathscr{B}=\mathscr{B}(\omega)$ be an enumeration of the balls that appear in the construction of $\mathscr{C}(\omega)$ (recall \ref{assump:inf-comp}). Then define the random variable \begin{equation}\label{eq:altern_graph_dis_def}
\begin{aligned}
	\tilde{\d}_\omega(x,y):=&\min\bigg\{n\in\N: \text{ $\exists$  $(B_i)_{i=1}^n\subset \mathscr{B}$ such that $x\in B_1, y\in B_n$, and}\\
	&\qquad\qquad\text{$B_{i-1}\cap B_{i}\neq \emptyset~\forall~ 1\leq i\leq n$ }\bigg\}
	\end{aligned}
\end{equation}
and set \begin{equation}\label{eq:ell-tilde-def}
 \tilde{\ell}:=\tilde{\d}_\omega(0,\mathfrak{v}_e).	
 \end{equation}
 Note that there is some constant $c>0$ such that for all $n>0$, \begin{equation}\label{eq:ell-tilde-exp-mom}
	\P_0\left(\tilde{\ell}>n\right)\leq \e^{-cn}.
\end{equation}
 For $j\in \N$, let $N_j:=\Z/2\cap [-j,j]^d$. We consider this set as a graph, where for $x,y\in N_j$, $x\sim y$ iff $|x-y|_1=\frac{1}{2}$. Note that if $\tilde{\ell}=j$, then there is a 
 nearest-neighbor path on $N_j$ of length $k\leq 3^d j$ such that for all $1\leq i\leq k-1$, the line segment between $x_{i}$ and $x_{i+1}$ is contained in the cluster. Thus, we write \begin{equation*}
	\{\tilde{\ell}=j\}=\bigcup_{k=1}^{3^dj}\bigcup_{x_1,\cdots,x_k}\bigg\{\tilde{\ell}=j\cap A(x_1,\cdots,x_k)\bigg\},
\end{equation*}
where \begin{equation}\label{eq:A-set-def}
	\begin{aligned}
		A(x_1,\cdots,x_k):=&\bigg\{x_1,\cdots,x_k\text{ is a nearest-neighbor path on } N_j,\\&\qquad  \forall 1\leq i\leq k-1 \text{ the line segment between }x_{i-1}, x_i\text{ is inside } {\mathcal C_\infty}\bigg\}.
	\end{aligned}
\end{equation}
If $\tilde{\ell}=j$, then one can write for some nearest neighbor path $0=x_0,x_1,\cdots,x_k$ on $N_j$ ($1\leq k\leq 3^d j$) such that the line segment between $x_{i}$ and $x_{i+1}$ is inside $\mathcal C_\infty$ for all $0\leq i\leq k-1$,\begin{align*}
	\int_{0\leadsto \mathfrak{v}_e}|G(\cdot,\omega)|\cdot \d\mathbf{r}&\leq \sum_{i=0}^{k-1}\int_{0}^1|G(t(x_{i+1}-x_i),\tau_{x_i}\omega)|\d t\\
	&\leq 2\sum_{i=0}^{k-1}\sum_{|e|=1}\int_0^{1/2} |G(te,\tau_{x_i}\omega)|\d t
	 \,\, \leq\,\,  2(3^d j)\sum_{x\in \mathcal{C}_\infty\cap N_j:|x|\leq 3^dj}\sum_{|e|=1}\int_0^{1/2} |G(te,\tau_{x}\omega)|\d t.
\end{align*}
Therefore, \begin{equation}\label{eq:eq7}
	\begin{aligned}
		\E_0|V_G(\mathfrak v_e,\cdot)|&=\sum_{j=1}^{\infty}\E_0\bigg[\bigg|\int_{0\leadsto \mathfrak{v}_e}G(\cdot,\omega)\cdot \d\mathbf{r}\bigg|,\tilde{\ell}=j\bigg]\\
	&\leq 2\sum_{j=1}^{\infty}\sum_{|e|=1}\sum_{x\in N_j:|x|\leq 3^d j}(3^d j)\int_0^{1/2}\E_0\bigg[|G(te,\tau_{x}\omega)|,x\leadsto x+e \subset \mathcal{C}_\infty,\tilde{\ell}=j\bigg]\d t\\
	&\leq 2\sum_{j=1}^{\infty}\sum_{|e|=1}\sum_{x\in N_j:|x|\leq 3^d j}(3^d j)\int_0^{1/2}\E_0\bigg[|G(te,\tau_{x}\omega)|^{1+\delta},x\leadsto x+e \subset \mathcal{C}_\infty\bigg]^{1/(1+\delta)}\P_0(\tilde{\ell}=j)^{\frac{\delta}{1+\delta}}\d t.
	\end{aligned}
\end{equation}
	Since $G\in \mathcal{G}_\delta$, then for any $x\in \R^d$,
	\begin{equation*}
		\E_0\left[|G(x,\omega)|^{1+\delta},x\in  \mathcal{C}_\infty\right]\leq \|G\|_{L^{1+\delta}(\P_0)}.
	\end{equation*}
 As a consequence, \eqref{eq:eq7} can be bounded by 	
 \begin{equation*}
	C(d)\|G\|_{L^{1+\delta}(\P_0)}\sum_{j=1}^{\infty}j^2\P_0(\tilde{\ell}=j)^{\frac{\delta}{1+\delta}}\leq C(d,\delta,\P_0)\|G\|_{L^{1+\delta}(\P_0)}
	\end{equation*}
	due to \eqref{eq:ell-tilde-exp-mom}. This finishes the proof of the proposition.

\end{proof}
Now we are ready to complete the

\noindent{\bf Proof of Lemma \ref{lemma3-lemma-minmax3}.} We have already proved \eqref{eq:unifbound-L-alpha}. On the other hand, the proof of \eqref{eq:unifbound-div} follows from the first inequality in \eqref{eq:eq17} (note that for this part we are only using the weak convergence of $G_n$ towards $G$ in $L^{1+\delta}(\P_0)$ and in distribution, which have been established in Lemma \ref{lemma1-lemma-minmax3}). Also, the closed loop property \eqref{eq:closed} was shown in Lemma \ref{lemma2-lemma-minmax3}. Thus it remain to check that $G$ satisfies the zero induced mean property $\E_0[ V_G(\mathfrak v_e,\cdot)]=0$, recall \eqref{eq:meanzero}.

Let us fix a coordinate unit vector $e$, and recall the definitions of $\mathfrak{v}_e=n(\omega,e)e$ from \eqref{def-n}, that of $\tilde{\ell}(\omega)=\tilde{\d}_\omega(0,\mathfrak{v}_e)$  from \eqref{eq:ell-tilde-def} and of the sets $A(x_1,\cdots,x_k)$ from \eqref{eq:A-set-def}.
Choose $\tilde{A}(x_1,\cdots,x_k)\subset A(x_1,\cdots,x_k)$ so that 
\begin{equation*}
	\{\tilde{\ell}=j\}=\bigsqcup_{k=1}^{3^dj}\bigsqcup_{x_1,\cdots,x_k}\{\tilde{\ell}=j\cap \tilde{A}(x_1,\cdots,x_k)\},
\end{equation*} 
where $\bigsqcup$ represents disjoint union. Next, for any $R>0$ 
define  \begin{equation}\label{eq:FM_def}
	\eta_R:=\E_0[V_{G}(\mathfrak{v}_e,\omega),\tilde{\ell} \leq R].
\end{equation}
 By dominated convergence theorem, the required identity $\E_0[V_g(\mathfrak v_e,\omega)]=0$ follows 
 once we show  that $\eta_R\to 0$ as $R\to \infty$. For this purpose, we further claim that
 \begin{equation}\label{eq:mean_zero_to_show}
 \begin{aligned}
	\eta_R =\lim_{n\to\infty}\E_0\bigg[\int_{0\leadsto \mathfrak{v}_e}G_{n}(\cdot,\omega)\cdot \d \mathbf{r},\tilde{\ell} \leq R\bigg] 
	=-\lim_{n\to\infty}\E_0\bigg[\int_{0\leadsto \mathfrak{v}_e}G_{n}(\cdot,\omega)\cdot \d \mathbf{r},\tilde{\ell} > R\bigg].
\end{aligned}
\end{equation} 
We observe that the second equality in \eqref{eq:mean_zero_to_show} follows from the fact that 
$$
\E_0\bigg[\int_{0\leadsto \mathfrak{v}_e}G_{n}(\cdot,\omega)\cdot \d \mathbf{r}\bigg]=\E_0\big[g_{n}(\sigma_e \omega)-g_{n}(\omega)\big]=0,
$$ 
because $\sigma_e$ is measure-preserving under $\P_0$ (recall Proposition \ref{prop 1}) and for each fixed $n$, $g_{n}$ is bounded and continuous. Thus, the only nontrivial claim is the first equality in \eqref{eq:mean_zero_to_show}. We decompose $\eta_R$ as 
(below, $x_0:=0$)
\begin{align*}
	\eta_R&=\sum_{j=1}^{R}\sum_{k=1}^{3^dj}\sum_{x_1,\cdots, x_k\in N_j}\E_0\bigg[\int_{0\leadsto \mathfrak{v}_e}G(\cdot,\omega)\cdot \d \mathbf{r}, \tilde{\ell}=j,\tilde{A}(x_1,\cdots, x_k)\bigg]\\
	&=\sum_{j=1}^{R}\sum_{k=1}^{3^dj}\sum_{x_1,\cdots, x_k\in N_j}\sum_{i=1}^{k}\E_0\bigg[\int_{x_{i-1}\leadsto x_i}G(\cdot,\omega)\cdot \d \mathbf{r},\tilde{\ell}=j, \tilde{A}(x_1,\cdots,x_k)\bigg].
\end{align*}
On $\tilde{A}(x_1,\cdots, x_j)$, we can always choose the straight line between these two points as a curve. Using that $G_{n}$ converges to $G$ weakly in $L^{1+\delta}(\P_0)$ (cf. Lemma \ref{lemma1-lemma-minmax3}) 
we deduce that \begin{equation*}
	\lim_{n\to\infty}\E_0\bigg[\int_{x_{i-1}\leadsto x_i}G_{n}(\cdot,\omega)\cdot \d \mathbf{r},\tilde{\ell}=j,\tilde{A}(x_1,\cdots,x_j)\bigg]
=\E_0\bigg[\int_{x_{i-1}\leadsto x_i}G(\cdot,\omega)\cdot \d \mathbf{r},\tilde{\ell}=j, \tilde{A}(x_1,\cdots,x_j)\bigg].
\end{equation*}
Therefore, \begin{equation*}
	\eta_R=\sum_{j=1}^{R}\sum_{k=1}^{3^dj}\sum_{x_1,\cdots, x_k\in N_j}\sum_{i=1}^{k}\lim_{n\to\infty}\E_0\bigg[\int_{x_{i-1}\leadsto x_i}G_{n}(\cdot,\omega)\cdot \d \mathbf{r},\tilde{\ell}=j, \tilde{A}(x_1,\cdots,x_j)\bigg].
\end{equation*}
Finally, we can exchange the limit with the sum over $x_1,\cdots, x_k$ by noting that \begin{equation*}
	\E_0\bigg[\int_{x_{i-1}\leadsto x_i}G_{n}(\cdot,\omega)\cdot \d \mathbf{r}, \tilde{\ell}=j, \tilde{A}(x_1,\cdots,x_j)\bigg]
\end{equation*} is uniformly bounded because $\sup_n \|G_{n}\|_{L^{1+\delta}(\P_0)}<\infty$. This shows \eqref{eq:mean_zero_to_show}. To conclude proving that $\eta_R\to 0$ as $R\to\infty$, we use \eqref{eq:mean_zero_to_show} to estimate $|\eta_R|$ as \begin{align*}
	|\eta_R|&=\lim_{n\to\infty}\bigg\lvert\E_0\bigg[\int_{0\leadsto \mathfrak{v}_e}G_{n}(\cdot,\omega)\cdot \d \mathbf{r},\tilde{\ell} > R\bigg]\bigg\rvert\leq \limsup_{n\to\infty}\sum_{j=R}^{\infty}\E_0\bigg[\int_{0\leadsto \mathfrak{v}_e}|G_{n}(\cdot,\omega)|\cdot \d \mathbf{r},\tilde{\ell} =j\bigg].\\
	\end{align*}
Now, following the arguments exactly as in the proof of Proposition \ref{zero expectation} and using that $\sup_n\|G_n\|_{L^{1+\delta}(\P_0)}<\infty$, we can show that the last display is bounded above by $C_1\e^{-C_2 R}$ for some constants $C_1,C_2>0$, implying that $|\eta_R|\to 0$, which in turn completes the proof that $G$ satisfies the induced mean zero property. Thus Lemma \ref{lemma3-lemma-minmax3} and therefore Proposition \ref{prop-minmax3} are proved. 
\qed

\section{Upper bound.} \label{sec-ub}

In this section we will prove the equivalence of $\overline H$ and $\overline\Lambda$ (see Theorem \ref{thm-equivalence} below) and conclude the proof of Theorem \ref{thm} in Section \ref{subsec ProofThmLDP}.  To this end, we recall the two variational formula 
$$
\begin{aligned}
&\overline{H}(\theta)=
\sup_{(b,\phi)\in \mathcal{E}} \bigg(\int \phi d\P_0\left[\frac{1}{2}\mathrm{div}(a\theta)+\langle \theta,b\rangle_a- L(b,\omega))\right],\\
& \overline{\Lambda}(\theta):=\inf_{G\in \mathcal{G}_{\delta}}\left(\mathrm{ess\,sup}_{\P_0}\left[\frac{1}{2}\mathrm{div}(a(G+\theta))+ H(G+\theta)\right]\right).
\end{aligned}
$$
and note that in Theorem \ref{thm-lb-geq-ub} we already proved the lower bound $\overline H(\cdot) \geq \overline\Lambda(\cdot)$. The reversed inequality 
\begin{equation}\label{eq-ub-geq-lb}
\overline\Lambda(\cdot) \geq \overline H(\cdot) 
\end{equation}
will immediately yield 
\begin{theorem}\label{thm-equivalence}
Assume \ref{assump:est-erg}-\ref{assump:fkg}, \ref{f1}, \ref{f1'}, \ref{f2} and \ref{f3}. Then for any $\theta\in\R^d$,
$$
\overline H(\theta)= \overline\Lambda(\theta).
$$
\end{theorem}

The inequality \eqref{eq-ub-geq-lb} will follow from the proposition below:

\begin{prop}
\label{prop: up-bound-lin-dat}
Assume \ref{assump:est-erg}-\ref{assump:fkg}, \ref{f1} and \ref{f2}. Let $u_{\eps,\theta}$ be the variational representation \eqref{eq:u-eps-def}  with initial condition $f(x)=\langle \theta,x\rangle$.
Then 
\begin{equation}
 \label{eq:up-bound-lin-dat}
  \limsup_{\eps\to  0}u_{\eps,\theta}(t,0,\omega)\ \leq\  
 t\, \overline{\Lambda}(\theta)\quad  \P_0\text{-}a.s.,
\end{equation}
where $\overline\Lambda$ is defined in \eqref{eq:Lambda_def}.
\end{prop}

Assuming the above fact, we can conclude

\begin{proof}[{\bf Proof of Theorem \ref{thm-equivalence} (assuming Proposition \ref{prop: up-bound-lin-dat}):}]
Combining Lemma \ref{lemma:lower-bound-lemma1}  for the particular case $f(x)=\langle\theta,x\rangle$ with 
Proposition \ref{prop: up-bound-lin-dat} we conclude that $\overline{H}(\theta)\leq \overline{\Lambda}(\theta)$. The reversed bound has been already shown in Theorem \ref{thm-lb-geq-ub}, which proves Theorem \ref{thm-equivalence}.
\end{proof}

\subsection{Proof of Proposition \ref{prop: up-bound-lin-dat}}\label{subsec proof ub lin f}
	Let us first sketch the main idea of the proof, for which we will apply Theorem \ref{thm:sublinear}. To simplify notation, for a fixed $\theta\in \R^d$,  we will simply write 
	$$
	u_{\eps}(t,x,\omega)= u_{\eps,\theta}(t,x,\omega).
	$$
	We now recall Lemma \ref{lemma:control_restriction}, which implies that for a fixed $t>0$,  
	\begin{equation}\label{ub-0}
	u_\eps(t,0,\omega)=	\eps \sup_{c\in \mathbf C_T^*} E^{P_{0}^{c,\omega}}\bigg[\langle \theta,X_{t/\eps}\rangle-\int_0^{t/\eps}L(X_s,c(s))\d s\bigg].
\end{equation}
with the class $\mathbf C^\star_T$ also defined in Lemma \ref{lemma:control_restriction}. Next, let us fix any $G\in \mathcal{G}_{\delta}$ as defined in \eqref{def-class-G}, with $V_G(x,\omega):=\int_{0\leadsto x} G(\cdot,\omega)\,\,\cdot\d\mathbf r$  as defined in \eqref{def-V}, and set 
\begin{equation}\label{hG}
h_G(x)= h_G(x,\omega):= \langle \theta,x\rangle+ V_G(x,\omega)
\end{equation}
 for a fixed $\omega\in \Omega_0$. If $V_G$ were smooth enough,  $\nabla h_G=\theta+G$ and by It\^{o}'s formula applied to the function $h_G(x)$, 
 \begin{equation}\label{Ito hG}
 \begin{aligned}
	\langle \theta,X_{t/\eps}\rangle + V_G(X_{t/\eps},\omega)&=\int_0^{t/\eps}(\theta+G(X_s))\sigma(X_s)\d\mathcal{B}_s+\frac{1}{2}\int_{0}^{t/\eps}\mathrm{div}(a(\theta+G))(X_s)\d s\\&+\int_0^{t/\eps}\langle c(s),\theta+G(X_s)\rangle_a \d s.
\end{aligned}
\end{equation}
 For any fixed $c\in \mathbf C^\star_T$, we subtract $\int_0^{t/\eps} L(X_s, c(s)) \d s$ from both sides in the above display and take expectation w.r.t. $P^{c,\omega}_0$ to obtain
  \begin{equation}\label{eq:eq35}
 \begin{aligned}
	&E^{P_{0}^{c,\omega}}\bigg[\langle \theta,X_{t/\eps}\rangle-\int_0^{t/\eps}L(X_s,c(s))\d s\bigg]=-E^{P_0^{c,\omega}}[V_G(X_{t/\eps},\omega)]	\\
	&\qquad +E^{P_0^{c,\omega}}\bigg[\int_0^{t/\eps}\bigg(\frac{1}{2}\mathrm{div}\big(a(X_s)(\theta+G(X_s))\big)+\langle c(s),\theta+G(X_s)\rangle_a-L(X_s,c(s))\bigg)\d s\bigg]\\
	&\leq -E^{P_0^{c,\omega}}\big[V_G(X_{t/\eps},\omega)\big]+ E^{P_0^{c,\omega}}\bigg[\int_0^{t/\eps}\frac{1}{2}\mathrm{div}\big(a(X_s)(\theta+G(X_s))\big)+H(X_s,\theta+G(X_s))\d s\bigg]\\
	&\leq -E^{P_0^{c,\omega}}\big[V_G(X_{t/\eps},\omega)\big]+\frac{t}{\eps}~\mathrm{ess\,sup}_{\P_0}\bigg[\frac{1}{2}\mathrm{div}(a(G+\theta))+ H(G+\theta)\bigg].\end{aligned}
 \end{equation}
 In the first upper bound of the above display we used that, for any $p,q,x \in \R^d$, $[\langle q,p\rangle_a- L(x,q)]\leq \sup_{q\in\R^d}[\langle q,p\rangle_a- L(x,q)]= H(x,p)$. For the second upper bound, we used that we have a function $F(x)=\frac 12 \mathrm{div}\big(a(x)(\theta+ G(x))\big)+H(x,\theta+G(x))$ that satisfies, for every $x$ and $\omega\in \Omega_0$, $F(x)=F(x,\omega)=F(\tau_x\omega)$ and we can consequently use the bound $\sup_{x,\omega} |F(x)|\leq \|F\|_{L^\infty(\P_0)}$.

Thus, combining \eqref{ub-0} and \eqref{eq:eq35} we would have 
 \begin{equation}\label{upbound}
 \begin{aligned}
	u_\eps(t,0,\omega)&\leq -\eps \inf_{c\in \mathbf C^*_T}E^{P_0^{c,\omega}}[V_G(X_{t/\eps},\omega)]+t~\mathrm{ess\,sup}_{\P_0}\left[\frac{1}{2}\mathrm{div}(a(G+\theta))+ H(G+\theta)\right].
	\end{aligned}
    \end{equation}
If $G$ were bounded (i.e., assuming that $G\in \mathcal G_\infty$), we could apply Theorem \ref{thm:sublinear} and deduce that $\P_0$-a.s., for all $r>0$ there exists some $c_r=c_r(\omega)$ such that for all $x\in {\mathcal C_\infty}$, $|V_G(x,\omega)|\leq r|x| + c_r$. Hence, recalling the definition of $\overline\Lambda(\cdot)$ from \eqref{eq:Lambda_def}, we would be led to 
$$
u_\eps(t,0,\omega)\leq t\overline{\Lambda}(\theta)+\eps c_r+ r \sup_{c\in \mathbf C^*_T}E^{P_0^{c,\omega}}[|\eps X_{t/\eps}|].
$$
By  Lemma \ref{lemma:control_restriction} and the inequalities \eqref{eq:ineq_c_norm}-\eqref{eq:ineq_c_norm2}, one can deduce (see \eqref{eq:unif_bound_xt} below for details) that $E^{P_0^{c,\omega}}[|\eps X_{t/\eps}|]$ is uniformly bounded over $0<\eps\leq 1$ and $c\in \mathbf C_T^*$. Thus, one simply let first $\eps\to 0$ and then $r\to 0$ to conclude the proof. 

However, a priori $G\in \mathcal G_{\delta}$ is neither smooth enough nor bounded. Nevertheless, we can mollify $G$ to get a smooth and bounded version, so that we can apply the same reasoning as above. We provide the details now.

Let $\rho$ be any spherically symmetric smooth mollifier having support in the unit ball and such that $\int_{\R^d}\rho(y)\d y=1$. For any $\eta>0$, we set 
\begin{equation}\label{eq:mollified-G}
	G_\eta(\omega):=\int_{\R^d}G(\tau_{\eta y}\omega)\rho(y)\d y, \qquad\mbox{and}\quad G_\eta(x,\omega)= G_\eta(\tau_x\omega)
\end{equation}
to be the convolution. Cleraly, for any $\omega\in \Omega_0$, $G_\eta(\cdot,\omega)\to G (\cdot,\omega)$ pointwise as $\eta\to 0$. Similarly, define 
\begin{equation}\label{def Veta}
V_{\eta,G}(x,\omega):=\int_{\R^d}V_{G}(x+\eta y,\omega)\rho(y)\d y, \quad\mbox{so that}\quad \nabla V_{\eta,G}=G_\eta
\end{equation}
Next, by Young's inequality, 
\begin{equation}\label{G eta bounded}
\|G_\eta\|_{L^\infty(\P_0)}\leq C_\eta <\infty. 
\end{equation} 
As a consequence, for any $x\in \mathcal C_\infty$ and any path $0\leadsto x$ inside $\mathcal C_\infty$,
\begin{equation}\label{eq:eq18} 
	V_{\eta,G}(x,\omega)-V_{\eta,G}(0,\omega)=\int_{0\leadsto x}G_\eta(\omega,r)\cdot \d \mathbf{r},
\end{equation}
so that the line integral is independent of the path $0\leadsto x$. Moreover, by Proposition \ref{zero expectation}, it also holds that 
\begin{equation}\label{Veta V}
\E_0\big[\big|V_{\eta,G}(\mathfrak v_e,\cdot)- V_G(\mathfrak v_e,\cdot)\big|\big]\to 0 \qquad\mbox{as}\,\, \eta\to 0. 
\end{equation}
Then, the constant vector $\xi_\eta:= \E_0[G- G_\eta]\in \R^d$ satisfies $|\xi_\eta|\to 0$ as $\eta\to 0$ and \eqref{def Veta}, \eqref{G eta bounded} and \eqref{Veta V} imply that we can replace $G_\eta$ by $\hat G_\eta:= G_\eta-\xi_\eta$ to obtain a smooth, bounded element $\hat G_\eta\in \mathcal{G}_\infty$ such that, together with \eqref{eq:eq18}, Theorem \ref{thm:sublinear} applies for each fixed $\eta$ to the function $x\mapsto V_{\eta,G}(x,\omega)- V_{\eta,G}(0,\omega)- \langle \xi_\eta,x\rangle$. Similarly, in \eqref{hG}, we can replace $\theta$ by $\theta- \xi_\eta$ and apply Ito's formula to the function $h_{\hat G_\eta}(x)= \langle \theta - \xi_\eta, x\rangle + V_{\eta,G}(x,\omega)$ and proceed as in \eqref{Ito hG} and \eqref{eq:eq35} to get 
 \begin{align*}
 	&E^{P_{0}^{c,\omega}}\left[\langle \theta,\eps X_{t/\eps}\rangle-\eps\int_0^{t/\eps}L(X_s,c(s))\d s\right]\\
 	&\leq -\eps E^{P_0^{c,\omega}}\big[V_{\eta,G}(X_{t/\eps},\omega)-V_{\eta,G}(0,\omega)-\langle \xi_\eta,X_{t/\eps}\rangle\big]\\
 	&\qquad+\eps E^{P_0^{c,\omega}}\left[\int_0^{t/\eps}\frac{1}{2}\mathrm{div}\left(a(X_s))(\theta+G_\eta(X_s)-\xi_\eta\right)+H(X_s,\theta+G_\eta(X_s)-\xi_\eta)\d s\right].
 \end{align*}
The first term in the upper bound above can be can be bounded by applying Theorem \ref{thm:sublinear} for each fixed $\eta$ to the function $x\mapsto V_{\eta,G}(x,\omega)- V_{\eta,G}(0,\omega)- \langle \xi_\eta,x\rangle$ and by proceeding exactly as described in the paragraph below \eqref{upbound}. To handle the second and third expectations, we use the convexity of $H$ and Jensen's inequality (recall that $\int_{\R^d} \rho=1$) and subsequently invoke the $L^\infty(\P_0)$ bound as explained above \eqref{upbound}. This leads to   
\begin{equation}\label{eq:eq16}
\begin{aligned}
	&\eps E^{P_0^{c,\omega}}\bigg[\int_0^{t/\eps}\int_{\R^d}\frac{1}{2}\bigg[\mathrm{div}(a(X_s))(\theta+G(X_s+\eta y)-\xi_\eta)\bigg]\rho(y)\d y\d s\bigg] \\
	&\qquad +\eps E^{P_0^{c,\omega}}\bigg[\int_0^{t/\eps}\int_{\R^d}\big[H\left(X_s,\theta+G(X_s+\eta y)-\xi_\eta\right) \big]\rho(y)\d y\d s\bigg]\\
	&\leq t ~\mathrm{ess\,sup}_{\P_0}\bigg[\frac{1}{2}\mathrm{div}\big(a(G+\theta-\xi_\eta)\big)+ H(G+\theta-\xi_\eta)\bigg].
\end{aligned}	
\end{equation}
Letting $\eps\to 0$, then $\eta\to 0$ and using the continuity of the
map \begin{equation*} \theta\mapsto
  \mathrm{ess\,sup}_{\P_0}\left[\frac{1}{2}\mathrm{div}(a(G+\theta))+
    H(G+\theta)\right]\end{equation*} we conclude the proof of the proposition.

\subsection{Upper bound: proof of Theorem \ref{thm}}\label{subsec ProofThmLDP}

In this section, we will prove the following corresponding upper bound. Combined with the corresponding lower bound shown in Theorem \ref{thm lower bound}, it will then complete the proof of Theorem \ref{thm}.

\begin{theorem}[Upper bound for $u_\eps$]
\label{thm-upper-bound}
Let $u_{\eps}(t,x)$ be defined as in
\eqref{eq:u-eps} and $u_{\hom}$ as in
\eqref{eq:lax-oleinik}. If \ref{assump:est-erg}-\ref{assump:fkg}, \ref{f1},\ref{f1'} and \ref{f2}-\ref{f4} hold, then $\P_0$-a.s., for all
$T>0$ and $R\geq 1$,
\begin{equation}\label{eq:upper-bound} 
\limsup_{\eps\to 0}\sup_{0\leq t\leq T}\sup_{x \in D(\eps, R)} 
\big( u_{\eps}(t,x,\omega) {-} u_{\hom}(t,x)\big)\leq 0  \quad \text{ with }
D(\eps, R)= \eps \mathcal C_\infty \cap B_R(0),.
\end{equation}
\end{theorem}
The proof of Theorem \ref{thm-upper-bound} will be shown in Section
\ref{sec-general-initial} below.

  \noindent{\bf Proof of Theorem \ref{thm}:} The proof follows by combining the lower bound from Theorem \ref{thm lower bound} and the upper bound from Theorem \ref{thm-upper-bound}. Indeed, we first prove Theorem \ref{thm} for the case $p=1$, namely for all  $R,T>0$, 
\begin{equation}\label{eq0}
\limsup_{\eps\to 0} \sup_{0\leq t \leq T} \frac 1 {|D(\eps,R)|} \int_{D(\eps,R)}   |u_\eps (t,x) - \uu(t,x)| \:\d x  = 0, 
\end{equation}
for $\P_0$-a.e.\ $\omega\in \Omega_0$. For this purpose, 
we exploit Theorem \ref{thm-upper-bound}.  Indeed, \eqref{eq:upper-bound} means 
\begin{equation}\label{Beps}
u_\eps(t,x,\omega)-\uu(t,x) \leq B_\eps(\omega) \text{ for all } (t,x)\in [0,T]{\times}
D(\eps,R),
\end{equation}
where $0\leq B_\eps(\omega) \to 0$ as $\eps \to 0$. Thus, we have
\[
|u_\eps{-} \uu| = \big| (B_\eps {+} \uu  {-}u_\eps) - B_\eps\big| \leq 2 B_\eps
+ \uu-u_\eps. 
\]
and conclude 
\[
\sup_{0\leq t\leq T}\frac1{|D(\eps,R)|} \int_{D(\eps,R)}   |u_\eps{-} \uu| 
\:\d x  \leq 2B_\eps  + \sup_{0\leq t\leq T} \frac1{|D(\eps,R)|} 
\int_{D(\eps,R)} \big(\uu {-}u_\eps \big) \:\d x 
\]
Thus, it remains to show that $\P_0$-a.s., for all $R\geq 1$ and $T>0$, 
\begin{equation}
  \label{eq:L1-conv-reduction1}
 \limsup_{\eps\to 0} \sup_{0\leq t\leq T} \frac1{|D(\eps,R)|} 
\int_{D(\eps,R)} \big(\uu(t,x) {-}u_\eps(t,x) \big) \:\d x \leq 0, 
\end{equation}
which will now follow from Theorem \ref{thm lower bound}. Indeed, if we let 
$$
V_\eps(t,x,\omega)= B_\eps(\omega) + \uu(t,x)- u_\eps(t,x,\omega)
$$
then by \eqref{Beps}, $B_\eps\to 0$ and $V_\eps\geq 0$ uniformly over $(t,x)$ and $\P_0$-a.s. If $Q_1(0)$ is the unit cube in $\R^d$ around $0$, to show \eqref{eq:L1-conv-reduction1} it suffices to show that $\P_0$-a.s., 
\begin{equation}
  \label{eq:L1-conv-reduction2}
 \limsup_{\eps\to 0} \sup_{0\leq t\leq T} \frac1{|Q_1(0)\cap \eps\mathcal C_\infty|} 
\int_{Q_1(0)\cap \eps\mathcal C_\infty} V_\eps(t,x) \d x \leq 0.
\end{equation}
We decompose 
$$
Q_1(0)= \cup_{j\in\{1,\dots,N\}^d} Q^{j,N} \qquad \mbox{where} \,\, Q^{j,N}=\frac 1 N(j+ Q_1(0))
$$
is the cube of side-length $1/N$ centered around $j/N\in \R^d$. Let $B_{r_N}(j/N)$ be the smallest ball of radius $r_N=C_d/N$ around $j/N$ contaning the cube $Q^{j,N}$. Next, note that $\frac{|Q_1(0)\cap \varepsilon \mathcal C_\infty|}{|Q_1(0)|}=\frac{1}{|Q_1(0)|}
\int_{Q_1(0)}
\mathbf 1_{\{x\in \varepsilon \mathcal C_\infty\}}\,\d x
=
\frac{1}{|Q_1(0)|}
\int_{Q_1(0)}
\mathbf 1_{\{\tau_{x/\varepsilon}\omega\in\{0\in\mathcal C_\infty\}\}}\, \d x$ and by the spatial ergodic theorem under $\P$, the right-hand side converges as $\eps\to 0$
$\P$-almost surely, and hence $\P_0$-almost surely, to $\E^\P\big[\mathbf 1_{\{0\in\mathcal C_\infty\}}\big]
=\P[0\in\mathcal C_\infty]$. Hence, $\P_0$-a.s., $|Q_1(0)\cap \varepsilon \mathcal C_\infty|
\longrightarrow |Q_1(0)|\,\P[0\in\mathcal C_\infty]>0$. Thus, 
$$
\begin{aligned}
&\limsup_{\eps\to 0} \sup_{0\leq t\leq T} \frac1{|Q_1(0)\cap \eps\mathcal C_\infty|} 
\int_{Q_1(0)\cap \eps\mathcal C_\infty} V_\eps(t,x) \d x \\
&\leq C \limsup_{\eps\to 0} \sup_{t\in [0,T]} \sum_{j\in\{1,\dots,N\}^d} \int_{Q^{j,N}\cap \eps \mathcal C_\infty} V_\eps(t,x) \d x \\
&\leq C \limsup_{\eps\to 0} \sup_{t\in [0,T]} \sum_{j\in\{1,\dots,N\}^d} \int_{B_{r_N}(j/N)\cap \eps \mathcal C_\infty} V_\eps(t,x) \d x \\
& \leq C |B_{r_N}| \limsup_{\eps\to 0} \sup_{t\in [0,T]} \sum_{j\in\{1,\dots,N\}^d} \frac 1 {|B_{r_N}(j/N) \cap \eps \mathcal C_\infty|}\int_{B_{r_N}(j/N)\cap \eps \mathcal C_\infty} V_\eps(t,x) \d x \\
& \leq C_d r_N^d N^d g(r_N)= C^\prime_d g(r_N) \to 0 \quad\mbox{as $N\to\infty$,}
\end{aligned}
$$
by Theorem \ref{thm lower bound}, where the second upper bound above we used that $V_\eps\geq 0$ and $Q^{j,N}\subset B_{r_N}(j/N)$. This proves \eqref{eq:L1-conv-reduction2} and therefore Theorem \ref{thm} for $p=1$. To extend it for all $p\geq 1$ we again exploit the bound $|u_\eps(t,x,\omega)-\uu(t,x)|
\leq C(1{+}R {+}T )$ (recall \eqref{eq:ueps.lower.upper} and the argument underneath) and the convergence in $L^p$ follow immediately. This completes the proof of Theorem \ref{thm}.
\qed 


\begin{remark}\label{remark proof LDP}
As mentioned in Remark \ref{remark LDP}, let $u_\eps$ be the solution of the Hamilton-Jacobi-Bellman equation \eqref{eq-HJB'} for the particular choice \eqref{Hb} and initial condition $f(x)=\langle \theta,x \rangle$. Then 
\begin{equation}\label{eq:v}
\begin{aligned}
&v(t,x):= \exp\bigg\{\frac{ u_\eps(\eps t, \eps x)}\eps\bigg\} \quad\mbox{solves} \quad \frac{\partial}{\partial t} v(t,x)= (\mathcal L^{\ssup{b,\omega}} v)(t,x), \qquad v(0,x)= \e^{\langle \theta,x \rangle}, \quad\mbox{where}\\
&\qquad \mathcal L^{\ssup{b,\omega}}= \mathrm{div}\big(a(\cdot,\omega)\nabla\cdot\,) 
  + \langle b(\cdot,\omega), \nabla\cdot \, \rangle_a
\end{aligned}
\end{equation} 
is the generator of the $\R^d$-valued diffusion $X_t$. By Feynman-Kac formula, 
we have $v(t,x)= E^{b,\omega}_x[\exp\{\langle \theta,X(t)\rangle\}]$ with
$E^{b,\omega}_x$ denoting expectation with respect to the diffusion with
generator $\mathcal L^{\ssup{b,\omega}}$ starting at $x\in \R^d$. Since 
$u_\eps(t,0) = \frac1\eps \log v(t/\eps, 0)$, we have 
$\lim_{\eps\to 0} \eps u_\eps(1,0)= \lim_{t\to\infty} \frac 1 t \log v(t,0)$,
and the quenched LDP mentioned in Remark \ref{remark LDP} follows from Theorem \ref{thm}. Actually the claim follows from combining Lemma \ref{lemma:lower-bound-lemma1}, Proposition \ref{prop: up-bound-lin-dat} and Theorem \ref{thm-equivalence}, which only assume the uniform continuity of the initial condition.\qed 
\end{remark}

\subsubsection{\bf Proof of Theorem \ref{thm-upper-bound}}
\label{sec-general-initial}

For  any fixed $t,\eps ,x$ and $\omega$,  
\begin{align*}
  u_\eps(t,x,\omega)-u_{\hom}(t,x) 
  &=\sup_{c\in \mathbf C_T^*}\bigg(E^{P_{x/\eps}^{c,\omega}}\bigg[f(\eps X_{t/\eps})-\eps \int_0^{t/\eps}L(X_s,c(s))\d s\bigg]\bigg)
  -\sup_{y\in \R^d}\bigg(f(y)-t\mathcal{I}\big(\frac{y-x}{t}\big)\bigg)\\
  &\leq \sup_{c\in \mathbf C_T^*}E^{P_{x/\eps}^{c,\omega}}\bigg[t\mathcal{I}\bigg(\frac{\eps X_{t/\eps}-x}{t}\bigg)-\eps \int_{0}^{t/\eps}L(X_s,c(s))\d s\bigg]\\
  &=\sup_{c\in \mathbf C_T^*}E^{P_{x/\eps}^{c,\omega}}\bigg[\sup_{\theta\in \R^d}\langle \theta, \eps X_{t/\eps}-x\rangle-t\overline{H}(\theta)-\eps \int_{0}^{t/\eps}L(X_s,c(s))\d s\bigg].
\end{align*}
Since by Lemma \ref{lemma:control_restriction} and \eqref{eq:ineq_c_norm}-\eqref{eq:ineq_c_norm2}
\begin{equation}
 \label{eq:unif_bound_xt}
 S:=\sup_{c\in \mathbf C^*_T}\sup_{0<\eps \leq 1}\sup_{0\leq t\leq
   T}\sup_{x\in D(\eps,R) }E^{P_{x/\eps}^{c,\omega}}\bigg[|\eps
 X_{t/\eps}|+\bigg\lvert\eps
 \int_{0}^{t/\eps}L(X_s,c(s))\d
 x\bigg\rvert\bigg]<\infty 
\end{equation} 
and $\overline{H}$ also satisfies the estimates of \eqref{eq:H1-H} with the Euclidean norm, we deduce
that it is enough to show that for any fixed $\theta\in \R^d$, 
\begin{equation}
  \label{eq:upper_bound_to_show} 
  \limsup_{\eps\to 0}\sup_{c\in
    \mathbf C_T^*}\sup_{0\leq t\leq T}\sup_{x\in D(\eps,R) }E^{P_{x/\eps}^{c,\omega}}\bigg[\langle \theta, \eps
  X_{t/\eps}-x\rangle-t\overline{H}(\theta)-\eps
  \int_{0}^{t/\eps}L(X_s,c(s))\d s\bigg]\leq 0.
\end{equation}
Following as in the proof of Proposition \ref{prop: up-bound-lin-dat}, for any
$G\in \mathcal{G}_{\delta}$ and $\eta>0$, we apply It\^{o}'s formula to
$\theta+ V_{\eta,G}-\xi_\eta$ with $|\xi_\eta|\to 0$ as $\eta\to 0$,
obtaining
\begin{align}
  &E^{P_{x/\eps}^{c,\omega}}\bigg[\langle \theta,\eps X_{t/\eps}-x\rangle-\eps \int_0^{t/\eps}L(X_s,c(s))\d s\bigg]-t\overline{H}(\theta) \nonumber\\
  &\leq -\eps E^{P_{x/\eps}^{c,\omega}}\big[V_{\eta,G}(X_{t/\eps},\omega)-V_{\eta,G}(x/\eps,\omega)-\langle \xi_\eta, X_{t/\eps}\rangle\big] \label{term1}\\
  &\qquad+\eps E^{P_{x/\eps}^{c,\omega}}\bigg[\int_0^{t/\eps}\frac{1}{2}\mathrm{div}\big(a(X_s))(\theta+G_\eta(X_s)-\xi_{\eta}\big)+H(X_s,\theta+G_\eta(X_s)-\xi_\eta)\d s\bigg]-t\overline{H}(\theta)\label{term3}.
\end{align}
To bound the first term \eqref{term1}, we recall that, thanks to Theorem
\ref{thm:sublinear}, $\P_0$-a.s., for all $\tau>0$ there is some
$C_\tau=C_\tau(\omega)$ such that for all $x\in {\mathcal C_\infty}$,
$|V_{\eta,G}(x,\omega)|\leq \tau|x|+C_\tau$. Then the first expectation
\eqref{term1} is bounded above by 
\begin{equation*} 2 \eps C_\tau +
  (\tau+|\xi_\eta|) E^{P^{c,\omega}_{x/\eps}}[|\eps X_{t/\eps}|+|x|].
\end{equation*}
By \eqref{eq:unif_bound_xt}, we deduce that 
\begin{align*} &\limsup_{\tau\to
    0}\limsup_{\eps\to 0}\sup_{c\in \mathbf C_T^*}\sup_{0\leq t\leq
    T}\sup_{x\in D(\eps,R) }\bigg(-\eps
  E^{P_{x/\eps}^{c,\omega}}[V_{\eta,G}(X_{t/\eps},\omega)-V_{\eta,G}(x/\eps,\omega)-\langle
  \xi_\eta,X_{t/\eps}\rangle]\bigg) \leq S|\xi_\eta|.
\end{align*}
We bound the second term as in \eqref{eq:eq16}, implying that \eqref{term3} is bounded above by
\begin{align*}
 t ~\mathrm{ess \, sup}_{\P_0}\bigg[\frac{1}{2}\mathrm{div}\big(a(G+\theta-\xi_{\eta})\big)+ H(G+\theta-\xi_{\eta})\bigg]-t\overline{H}(\theta).
\end{align*}
By Theorem \ref{thm-equivalence}, for any $\eps'>0$, there is some
$G\in \mathcal{G}_{\delta}$ so that the last display is bounded
by $\eps'+t
  \overline{H}(\theta-\xi_\eta)-t\overline{H}(\theta)$,
so that the final bound is
$S|\xi_\eta|+\eps'+t \overline{H}(\theta-\xi_\eta)-t\overline{H}(\theta)$. As
$\overline{H}$ is continuous, letting first $\eps\to 0$, then $\eps'\to 0$
and finally $\eta\to 0$, we deduce \eqref{eq:upper-bound}, thus proving Theorem \ref{thm-upper-bound}.
\qed

\section{Sublinear growth: Proof of Theorem \ref{thm:sublinear}}\label{sec-proof-sublinear}
 We recall the definition of the class $\mathcal G_\delta$ and $\mathcal G_\infty$ from Section \ref{classG}. The proof of Theorem \ref{thm:sublinear} will be carried out in the following three main steps: 
 
\noindent (A) In the first step we will show that, if $G \in \mathcal G_\delta$, then $V_G$ has only sublinear 
growth along any of the coordinate directions. This will be shown in Theorem \ref{sublinearity along coordinate directions} in Section \ref{sec-coordinate}.

\noindent (B) Next, we will  
provide a control on the growth of $V_G$ on growing balls on $\mathcal C_\infty$ ``on average":  Proposition \ref{thm:weaksublinear} in Section \ref{sec:weaksublinear} will show that, if $G\in \mathcal G_\infty$, then for any $\eps>0$ and $\P_0$-a.s., 
$$
\lambda_d\big\{x: \mathcal C_\infty(\omega): |x|\leq r, \frac{|V_G(x,\omega)|}r \geq \eps\big\} = o(r^d)\qquad\mbox{ as }r\uparrow \infty.
$$ 

\noindent (C) Using the two steps above, the proof of Theorem \ref{thm:sublinear} will be completed in Section \ref{sec-proof-thm:sublinear}. In this step, as well as in the preceding steps above, 
the geometry of $\mathcal C_\infty$ and \ref{assump:est-erg}-\ref{assump:fkg} will play a crucial role. Here and in the sequel, $\lambda_d$ denotes Lebesgue measure on $\R^d$.

\subsection{Controlling directional growth.}\label{sec-coordinate}

We fix a unit coordinate vector $e$ and for $\omega\in\Omega_0$, define the {\it successive arrivals}  $(n_k(\omega))_{k\in\N}$
of the cluster recursively as follows:
Recall \eqref{def-n} and define 
	$$
	n_1(\omega)= n(\omega,e), \quad\mbox{and for $k\geq 1$ we set}\quad 
	n_{k+1}(\omega):=\min\{l\in\N:l>n_k(\omega), le\in{\mathcal C_\infty}(\omega)\}.
	$$

\begin{theorem}\label{sublinearity along coordinate directions}	
Recall the corrector $V_G$ from Definition \eqref{def-V} and 
let $e$ be any unit coordinate vector. If $G\in \mathcal{G}_\delta$, then  $\P_0$-a.s.,
	$$
	\lim_{k\to\infty}\frac{|V_G(n_k(\omega)e,\omega)|}{k}=0.
	$$
\end{theorem}

\begin{proof}
For each $k\in\N$, set $x_0=0$ and $x_j=n_{j}e$ for $1\leq j\leq k$. We
choose a path $0\leadsto x_{k}$ from 0 to $x_{k}$ contained in
${\mathcal C_\infty}(\omega)$ such that, for some $0=t_0<t_1<...<t_k=1$ and
$r:[0,1]\rightarrow (x_0\leadsto x_{k})$, it holds $r(t_j)=x_j$. Then by the
definition of $V_G$ in \eqref{def-V},
\begin{align*}
V_G(n_ke,\omega)&=\int_{x_0\leadsto x_{k}} G( r,\omega)\d r=\sum_{j=0}^{k-1}\int_{x_j\leadsto x_{j+1}} G( r,\omega)\d r=\sum_{j=0}^{k-1}\int_{0\leadsto (x_{j+1}-x_j)} G( r,\tau_{x_j}\omega)\d r\\
&=\sum_{j=0}^{k-1}V_G(x_{j+1}-x_j,\tau_{x_j}\omega)=\sum_{j=1}^{k-1}V_G((n_{j}(\omega)-n_{j-1}(\omega))e,\tau_{n_{j-1}e}\omega)\\
&=\sum_{j=0}^{k-1}V_G(n_1(\sigma_e^j(\omega))e,\sigma_e^j(\omega)).
\end{align*}
where we recall from \eqref{def-induced} the definition of the induced shift $\sigma_e: \Omega_0 \to \Omega_0$. We define the function 
\begin{align}
\label{f sum for psi}
F(\omega)=V_G(n(\omega,e)e,\omega) \qquad\mbox{so that}\qquad 
V_G(n_k(\omega),e)e,\omega)=\sum_{j=0}^{k-1} F \circ\sigma_e^j(\omega).
\end{align}
Proposition \ref{prop 1} implies that the induced shift $\sigma_e$ is $\P_0$-preserving and ergodic. Furthermore, Proposition \ref{zero expectation} implies that $F\in L^1(\P_0)$. 
Then by Birkhoff's Ergodic Theorem,
\begin{align}
\label{convergence of f sum}
\lim_{k\to\infty}\frac{\sum_{j=0}^{k-1}F\circ\sigma_e^j(\omega)}{k}=\bE_0[V_G(n(\omega,e)e,\omega)]=0,
\end{align}
where the last equality follows from the induced mean-zero property \eqref{eq:meanzero} of $G\in \mathcal{G}_\delta$. 	
\end{proof}

\begin{cor}\label{cor:sublinearity_along_coordinates}
Let  $G\in \mathcal{G}_\infty$. Then for any unit coordinate vector $e$ and $\P_0$-a.s., 
\begin{equation*}
	\lim_{s\to\infty}\1\{se\in {\mathcal C_\infty}(\omega)\}\frac{|V_G(se,\omega)|}{s}=0.
\end{equation*}
\end{cor}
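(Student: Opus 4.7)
The plan is to reduce the continuous-parameter statement to the discrete-arrival result of Theorem~\ref{sublinearity along coordinate directions} by approximating any real $s$ with $se\in\mathcal C_\infty(\omega)$ by the nearest preceding successive arrival $n_k(\omega)$. By Proposition~\ref{prop 1}, the induced shift $\sigma_e$ preserves $\P_0$ and is ergodic, and since $n_1\in L^1(\P_0)$ by~\ref{assump:exp-dec-dist-indshift}, Birkhoff's theorem applied to $n_k(\omega)=\sum_{j=0}^{k-1}n_1(\sigma_e^{j}\omega)$ gives $n_k/k\to\E_0[n_1]\in(0,\infty)$ $\P_0$-a.s. In particular the index $k(s):=\max\{k\colon n_k(\omega)\leq s\}$ tends to $\infty$ as $s\to\infty$ through values with $se\in\mathcal C_\infty$, and $n_{k(s)}\leq s\leq C\,k(s)$ for large $s$ on a full $\P_0$-measure event.

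Next, I decompose $|V_G(se,\omega)|\leq |V_G(n_k e,\omega)|+|V_G(se,\omega)-V_G(n_k e,\omega)|$ with $k=k(s)$. The first term divided by $s$ vanishes by Theorem~\ref{sublinearity along coordinate directions} together with $s\geq n_k$ and $n_k/k\to\E_0[n_1]$. For the second term I exploit $G\in\mathcal G_\infty$, so $\|G\|_\infty:=\esssup_{\P_0}|G|<\infty$; since $V_G$ is a path integral of $G$ inside the cluster and is path-independent by curl-freeness, any increment is controlled by $\|G\|_\infty$ times the interior distance:
$$
|V_G(se,\omega)-V_G(n_k e,\omega)|\leq \|G\|_\infty\,\d_\omega(n_k e,se).
$$
Using the translation identity $\d_\omega(n_k e,se)=\d_{\sigma_e^k\omega}(0,(s-n_k)e)$ and the fact that $s-n_k\in[0,n_1(\sigma_e^k\omega))$, this is further bounded by $\|G\|_\infty D_k(\omega)$, where
$$
D_k(\omega):=\sup_{0\leq t\leq n_1(\sigma_e^k\omega)}\1\{te\in\mathcal C_\infty(\sigma_e^k\omega)\}\,\d_{\sigma_e^k\omega}(0,te).
$$

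Finally, I show $D_k/k\to 0$ $\P_0$-a.s. Since $\sigma_e$ preserves $\P_0$, each $D_k$ has the same law as $D_0$, and Lemma~\ref{lemma 4.4} provides the exponential tail $\P_0(D_0>t)\leq C\e^{-at}$. Therefore $\sum_k\P_0(D_k>\eps k)<\infty$ for every $\eps>0$, and the first Borel--Cantelli lemma (no independence needed) gives $\limsup_k D_k/k\leq\eps$ $\P_0$-a.s.; letting $\eps\downarrow 0$ along a countable sequence yields $D_k/k\to 0$. Combining the above three steps,
$$
\frac{|V_G(se,\omega)|}{s}\leq \frac{|V_G(n_k e,\omega)|}{n_k}+\|G\|_\infty\,\frac{D_k(\omega)}{n_k}\longrightarrow 0,
$$
which proves the corollary. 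The only place where the stronger hypothesis $G\in\mathcal G_\infty$ (rather than $\mathcal G_\delta$) enters is the pointwise estimate $|V_G(x,\omega)-V_G(y,\omega)|\leq\|G\|_\infty\d_\omega(x,y)$; this is the mild technical obstacle, since without essential boundedness one would need a substantially more delicate estimate on path integrals of merely $L^{1+\delta}$ fields along random paths inside the cluster.
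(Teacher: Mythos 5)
Your proof is correct and follows essentially the same route as the paper: sandwich $s$ between successive arrivals $n_k \le s < n_{k+1}$, split off $V_G(n_k e,\omega)$ (handled by Theorem~\ref{sublinearity along coordinate directions} plus $n_k/k\to\E_0[n_1]$), bound the increment by $\|G\|_\infty$ times the interior-distance supremum over the inter-arrival interval, and kill that term with Lemma~\ref{lemma 4.4}, the $\sigma_e$-invariance of $\P_0$, and Borel--Cantelli. The only cosmetic difference is that you make the translation identity $n_{k+1}-n_k=n_1(\sigma_e^k\omega)$ and the resulting random variables $D_k$ explicit, whereas the paper keeps the supremum in the $\omega$-variable; the content is the same.
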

\begin{proof}
	If $se\in {\mathcal C_\infty}(\omega)$, then there exists $k\geq 0$ such that $n_k(\omega)\leq s < n_{k+1}(\omega)$. Note that $k\nearrow \infty$ as $s\nearrow\infty$. Then we have \begin{equation*}
		\frac{|V_G(se,\omega)|}{s}\leq \frac{|V_G(n_{k}(\omega)e,\omega)|}{n_k(\omega)}+ \frac{|V_G((s-n_k(\omega))e,\tau_{n_k(\omega)e}\omega)|}{n_k(\omega)}.
	\end{equation*}
	By the ergodic theorem (as in the proof of Theorem \ref{sublinearity along coordinate directions}) and  \ref{assump:exp-dec-dist-indshift}, $\lim_{k\to\infty}\frac{n_k(\omega)}{k}= \bE_0[n_1]<\infty$ $\P_0$-a.s. This fact, together with Theorem \ref{sublinearity along coordinate directions}, allow us to deduce that the first term in the sum above goes to zero as $s\to \infty$. It remains to bound the second term. Note that it suffices to show that \begin{equation*}
		\lim_{k\to\infty}\sup_{n_k(\omega)\leq s\leq n_{k+1}(\omega)}\1\{se\in {\mathcal C_\infty}(\omega)\}\frac{|V_G((s-n_k(\omega))e,\tau_{n_k(\omega)e},\omega)|}{k}=0\quad \P_0\text{-a.s.}
	\end{equation*}
	Since $G\in \mathcal{G}_\infty$, it is enough to prove that \begin{equation}
		\lim_{k\to\infty}\sup_{n_k\leq s\leq n_{k+1}}\1\{se\in {\mathcal C_\infty}(\omega)\}\frac{\d_{\omega}(n_k(\omega)e,se)}{k}=0~\P_0\text{-}a.s.
	\end{equation}
	By the Borel-Cantelli lemma, it suffices to verify that for any $\eps>0$, \begin{equation}\label{eq:borel-cantelli-sum}
		\sum_{k=1}^{\infty}\P_0\left(\sup_{n_k(\omega)\leq s\leq n_{k+1}(\omega)}\1\{se\in {\mathcal C_\infty}(\omega)\}\d_{\omega}(n_k(\omega)e,se)>k\eps\right)<\infty.
	\end{equation}
	Since $\P_0$ is invariant under $\tau_{n_ke}$, the sum above is 
	\begin{equation*}
		\sum_{k=1}^{\infty}\P_0\left(\sup_{0\leq s\leq n_{1}(\omega)}\1\{se\in {\mathcal C_\infty}(\omega)\}\d_{\omega}(0,se)>k\eps\right)<\infty \qquad\mbox{for each $\eps>0$ by Lemma \ref{lemma 4.4}.}
	\end{equation*}
	\end{proof}

\subsection{Controlling density of growth.}\label{sec:weaksublinear}

The main result of this section is the following result: 
\begin{prop}\label{thm:weaksublinear}

	Let $d\geq 2$ and $G\in \mathcal{G}_\infty$. Then for all $\eps>0$ and $\P_0$-almost all $\omega$,
	\begin{align}\label{equation in theorem 5.4}
	\limsup_{r\rightarrow\infty}\frac{1}{(2r)^d}\int_{x\in{\mathcal C_\infty}(\omega),|x|\leq r}\1\{|V_G(x,\omega)|\geq \eps r\}\d x=0.
	\end{align}
\end{prop}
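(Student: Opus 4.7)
The plan is to deduce the density-of-growth statement from the directional sublinearity of Corollary \ref{cor:sublinearity_along_coordinates} via a grid decomposition, exploiting the cocycle identity
\[
V_G(x,\omega) = V_G(y,\omega) + V_G(x-y,\tau_y\omega) \quad \text{for } x,y\in\mathcal C_\infty(\omega),
\]
(which follows from the path-independence \eqref{eq:closed}), together with the uniform bound $\|G\|_\infty<\infty$ available for $G\in\mathcal G_\infty$, the chemical-distance control in \ref{assump:chem-dist}, and $\R^d$-ergodicity of $\P$ from \ref{assump:est-erg}. A crucial qualitative observation is that since $G\in\mathcal G_\infty$, the oscillation of $V_G$ on a subset $S\subset\mathcal C_\infty(\omega)$ is bounded by $\|G\|_\infty$ times its chemical diameter.

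First, I would fix a small $\eta>0$ (to be tuned in terms of $\eps,c_0,\|G\|_\infty$) and partition $B_r$ into cubes $Q_i$ of side $\eta r$. On the ``typical'' cubes for which $\mathrm{diam}_{d_\omega}(Q_i\cap\mathcal C_\infty)\leq c_0\sqrt{d}\,\eta r$, the oscillation estimate gives $\sup_{x,y\in Q_i\cap\mathcal C_\infty}|V_G(x)-V_G(y)|\leq c_0\sqrt{d}\,\eta r\,\|G\|_\infty<\eps r/2$ once $\eta$ is chosen small enough. A Campbell-type union bound based on \ref{assump:chem-dist} (and the multi-parameter ergodic theorem applied to the indicator that $Q_i$ is atypical) shows that the volume contributed by atypical cubes has density $\to 0$ in $B_r$. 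Hence the problem reduces to showing that, picking a canonical representative $y_i\in Q_i\cap\mathcal C_\infty$ from each typical cube, the density of $i$ for which $|V_G(y_i,\omega)|\geq \eps r/2$ vanishes.

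Second, I would evaluate each $V_G(y_i,\omega)$ by telescoping along a coordinate-aligned path. For $y_i=(y_{i,1},\dots,y_{i,d})$ with corners $y_i^{(j)}=\sum_{\ell\leq j}y_{i,\ell}e_\ell$, the cocycle identity yields
\[
V_G(y_i,\omega) = \sum_{j=1}^{d} V_G\!\big(y_{i,j}e_j,\,\tau_{y_i^{(j-1)}}\omega\big) + \mathrm{error}_i,
\]
where $\mathrm{error}_i$ collects the contributions of short detours required when the corners $y_i^{(j)}$ fail to lie in the cluster; by \ref{assump:chem-dist} and $\|G\|_\infty<\infty$, these detours cost at most $Cd\,\|G\|_\infty\,\eta r$ on typical cubes. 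Thus if $|V_G(y_i,\omega)|\geq \eps r/2$, then for some $j$ the $j$th summand exceeds $\eps r/(4d)$. Using the $\R^d$-ergodic theorem for $(\tau_x)$ applied in the perpendicular coordinates, followed by Corollary \ref{cor:sublinearity_along_coordinates} in the remaining coordinate, one obtains that the density of such ``bad'' representatives goes to $0$, which combined with the first step yields the proposition.

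The main obstacle is the final step, because $V_G$ is a non-local functional of $\omega$, so one cannot directly apply a multi-parameter ergodic theorem to the indicator $\{|V_G(se_j,\omega)|\geq\delta s\}$. My approach would be to use Corollary \ref{cor:sublinearity_along_coordinates} at \emph{fixed} $\omega$---which gives vanishing Ces\`aro density in $s$ of this indicator---and then integrate over the perpendicular base coordinates using $\R^d$-ergodicity of $\P$, with the Palm-measure identity \eqref{eq:refined-camp-decomp} to convert conditional statements under $\P_0$ into statements under $\P$. The delicate bookkeeping is to decouple (a) the event that the shifted base point $y_i^{(j-1)}$ lies in $\Omega_0$, (b) the ``bad'' event for $V_G$ along the coordinate $e_j$, and (c) the constraints from having chosen a typical cube in the first step; this is handled cleanly by \eqref{eq:refined-camp-decomp} together with Fubini's theorem applied to Lebesgue integration across the coordinate hyperplanes.
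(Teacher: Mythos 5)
Your overall plan---reduce to controlling $V_G$ at a discrete set of representatives, use the $L^\infty$ bound on $G$ and chemical-distance control to bound local oscillations, and feed in the directional sublinearity of Corollary \ref{cor:sublinearity_along_coordinates}---points in the right direction, and the oscillation estimate $|V_G(x,\omega)-V_G(y,\omega)|\leq \d_\omega(x,y)\,\|G\|_\infty$ is exactly what the paper also relies on. But your final step has a genuine gap that the bookkeeping you describe cannot repair.

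The difficulty is a double-limit issue with no available rate. After the telescoping you propose, the $j$th summand is $V_G\bigl(y_{i,j}e_j,\tau_{y_i^{(j-1)}}\omega\bigr)$, where the base point $y_i^{(j-1)}$ scales like $r$ and hence moves to infinity with $r$. Corollary \ref{cor:sublinearity_along_coordinates} is a $\P_0$-a.s. pointwise statement with an $\omega$-dependent onset $s_0(\omega)$; it gives no control uniform over the translates $\tau_{y_i^{(j-1)}}\omega$. If you instead fix the evaluation length and let the perpendicular ergodic average run to infinity first, you get $\E_0$ of a Ces\`aro density, which tends to zero only in a second, decoupled limit. The statement you need couples the two limits (both equal to $r$), and neither the Campbell formula \eqref{eq:refined-camp-decomp} nor Fubini changes the order of limits; they only reshuffle where the expectation sits. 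The missing idea is precisely what the paper introduces to resolve this: the sets $\mathscr G_{K,\eps}$ of points $x$ satisfying the \emph{uniform} directional bound $|V_G(x+te,\omega)-V_G(x,\omega)|\leq K+\eps|t|$ for all $t$ and all coordinate directions $e$. Membership in $\mathscr G_{K,\eps}$ is a fixed measurable function of $\tau_x\omega$, so the ergodic theorem applies to its indicator with no rate issues, and $\lim_{K\to\infty}\P(0\in\mathscr G_{K,\eps})=\P(0\in\mathcal C_\infty)$ by Corollary \ref{cor:sublinearity_along_coordinates}. This device replaces your pointwise $o(s)$ estimate at moving base points by an $\eps$-Lipschitz-plus-$K$ bound that is stable under the translation-ergodic averaging.

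A secondary gap: the paper's inductive step (Lemma \ref{lemma 5.5}, lifting $\varrho_k=0$ to $\varrho_{k+1}=0$) crucially invokes the FKG inequality \ref{assump:fkg} to guarantee that pairs of lattice points are \emph{simultaneously} in the cluster (and in $\mathscr G_{K,\eps}$) with positive density, so that a ``bridge'' representative connecting two columns can be found. Your telescoping route needs a comparable mechanism---ensuring that the corner points of the coordinate path can actually be routed through $\mathcal C_\infty$ with small detour for a density-$1$ set of representatives---and this is not supplied by \ref{assump:chem-dist} alone (which is a probabilistic tail bound on $\d_\omega$, not a pointwise one, and does not address simultaneous membership of several corners in the cluster). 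Finally, note that with cubes of side $\eta r$ you only have a bounded number $(2/\eta)^d$ of cubes, so ``density of bad cubes $\to 0$'' degenerates to ``eventually none is bad,'' which you cannot expect; the paper instead works with continuum densities $\varrho_k$ over growing boxes, avoiding this discretization artifact.
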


The proof of Proposition \ref{thm:weaksublinear} consists of three main steps.

\noindent{\bf Step 1:} We start this section with a definition: 
	Given $K>0$ and $\eps>0$, we say that a point $x\in\R^d$ belongs to $\mathscr G_{K,\eps}(\omega)$ for $\omega\in\Omega$ if $x\in{\mathcal C_\infty}(\omega)$ and
	\begin{align}
	\label{good points}
	|V_G(x+te,\omega)-V_G(x,\omega)|\leq K + \eps |t|
	\end{align}
	for each $t\in \R$, and $e$ is a unit coordinate vector such that $x+te\in {\mathcal C_\infty}(\omega)$.
	We will use the following consequence of Corollary \ref{cor:sublinearity_along_coordinates}
	in the sequel: for every $\eps>0$, $\P(0\in {\mathcal C_\infty})=\lim_{K\to\infty}\P(0\in \mathscr{G}_{K,\eps})$. 
For $k\in\{1,...,d\}$, let us also define
\begin{align}
\label{definition of Lambda}
\Lambda_r^k=\{x\in\R^k:|x|_\infty\leq r\},
\end{align}
which is the $k$-dimensional section of the $d$-dimensional box $\{x\in\R^d:|x|_\infty\leq r\}$, and set 
\begin{equation}\label{definition of rho}
	\begin{aligned}
\varrho_{k,\eps}(\omega)&:=\limsup_{r\rightarrow\infty}\inf_{y\in{\mathcal C_\infty}(\omega)\cap\Lambda_r^1}\frac{1}{|\Lambda_r^k|}\int_{x\in{\mathcal C_\infty}(\omega)\cap\Lambda_r^k}\1\{|V_G(x,\omega)-V_G(y,\omega)|\geq \eps r\}\d x,\quad 
\varrho_k(\omega)&:=\lim_{\eps\searrow 0}\varrho_{k,\eps}(\omega).\\
\end{aligned}
\end{equation}

\begin{lemma}
	\label{lemma 5.5}
	Let $1\leq k<d$. If $\varrho_k=0$ $\P$-almost surely, then also $\varrho_{k+1}=0$ $\P$-almost surely.
\end{lemma}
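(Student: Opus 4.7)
The plan is a Fubini-style slicing argument in the $(k+1)$-th coordinate direction: apply the induction hypothesis on each $k$-dimensional slice to obtain a slice-dependent reference point, and exploit the 1D sublinearity of $V_G$ along coordinate directions (Corollary \ref{cor:sublinearity_along_coordinates}) to compare all slice-references against a single global reference $y\in\Lambda_r^1\cap\mathcal{C}_\infty$.

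Since $\varrho_k$ is invariant under bounded translations (the $\limsup_{r\to\infty}$ absorbs any fixed shift), the hypothesis $\varrho_k = 0$ $\P$-a.s.\ combined with Egorov's theorem furnishes, for a fixed target $\eps' > 0$ and $\eps_0 := \eps'/10$, a set $E\subset\Omega$ of arbitrarily high $\P$-probability and a threshold $R_0$ such that for $\omega\in E$ and $r>R_0$ the $k$-dimensional infimum over references at threshold $\eps_0 r$ is essentially $\varrho_{k,\eps_0}$. Writing, after the change of variable $t = rs$,
\begin{equation*}
\frac{1}{|\Lambda_r^{k+1}|}\int_{\Lambda_r^{k+1}\cap\mathcal{C}_\infty}\1\{|V_G(x)-V_G(y)|\ge \eps' r\}\,\d x = \tfrac{1}{2}\int_{-1}^{1}g_r(s)\,\d s,
\end{equation*}
with $g_r(s)\le 1$ the $k$-density of bad points on the slice at height $rs$, the ergodic theorem in direction $e_{k+1}$ implies that $\tau_{rs\,e_{k+1}}\omega\in E$ for a high-density subset of $s\in[-1,1]$; on this subset the induction provides a slice-reference $w_{r,s}\in \Lambda_r^1\times\{rs\}\cap\mathcal{C}_\infty$ with the $k$-density of $x$ having $|V_G(x)-V_G(w_{r,s})|\ge \eps_0 r$ at most $\varrho_{k,\eps_0}+o(1)$. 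By the reverse Fatou lemma the task reduces to producing a single $y$ satisfying $|V_G(w_{r,s})-V_G(y)|<(\eps'-\eps_0)r$ for a.e.\ $s$.

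To produce such $y$, I would fix $K$ large enough that $\P(0\in \mathscr{G}_{K,\eps_0})>\P(\Omega_0)-\eps_0$, then use ergodicity in direction $e_1$ to select $y=y_1 e_1\in \Lambda_r^1\cap\mathcal{C}_\infty\cap\mathscr{G}_{K,\eps_0}$. The comparison is carried out along the two-leg path $y\mapsto y+rs\,e_{k+1}\mapsto w_{r,s}$: the vertical leg in direction $e_{k+1}$ is controlled by the good-point property of $y$, giving $|V_G(y+rs\,e_{k+1})-V_G(y)|\le K+\eps_0 r$; the horizontal leg within the slice in direction $e_1$ is controlled provided $y+rs\,e_{k+1}\in \mathscr{G}_{K,\eps_0}$, giving $|V_G(w_{r,s})-V_G(y+rs\,e_{k+1})|\le K+2\eps_0 r$. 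For $r$ large the sum is $\le 3\eps_0 r + 2K<(\eps'-\eps_0)r$, yielding the required bound.

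The main obstacle is guaranteeing that the set of $s\in[-1,1]$ for which $y+rs\,e_{k+1}\in \mathscr{G}_{K,\eps_0}$ has Lebesgue density close to $\P(\mathscr{G}_{K,\eps_0})$ uniformly in the random choice of $y=y_1 e_1$. To overcome this, I would invoke the two-dimensional ergodic theorem in the plane spanned by $e_1$ and $e_{k+1}$: the density of $\mathscr{G}_{K,\eps_0}$-points in the square $[-r,r]^2$ converges $\P$-a.s.\ to $\P(\mathscr{G}_{K,\eps_0})$, and Fubini then produces a positive-density subset of $y_1\in[-r,r]$ for which the line $y_1 e_1+\R\,e_{k+1}$ meets $\mathscr{G}_{K,\eps_0}$ with density arbitrarily close to $\P(\mathscr{G}_{K,\eps_0})$. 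Choosing $y_1$ in the intersection of this subset with $\{y_1:y_1 e_1\in \mathcal{C}_\infty\cap\mathscr{G}_{K,\eps_0}\}$ gives the desired simultaneous control; combining all the bounds yields $\varrho_{k+1,\eps'}\le O(\varrho_{k,\eps_0}+\eps_0)$, and letting $\eps'\to 0$ concludes $\varrho_{k+1}=0$ $\P$-a.s.
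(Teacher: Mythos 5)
Your slicing strategy captures some of the right ingredients (the good-point sets $\mathscr{G}_{K,\eps}$, coordinate-wise chains, the ergodic theorem), but there is a structural gap in the two-leg path $w_{r,s}\to y+rs\,e_{k+1}\to y$. Both legs require the intermediate point $y+rs\,e_{k+1}$ to lie in $\mathcal{C}_\infty$, since the good-point property in \eqref{good points} only controls increments to targets that are \emph{inside} the cluster, and the horizontal leg further needs $y+rs\,e_{k+1}\in\mathscr{G}_{K,\eps_0}$. But the vertical line through any fixed $y$ meets $\mathcal{C}_\infty$ (let alone $\mathscr{G}_{K,\eps_0}$) with linear density at most $p_\infty=\P(0\in\mathcal{C}_\infty)<1$, and this does not improve as $K\to\infty$. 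For the complementary set of heights $s$ --- of density at least $1-p_\infty+o(1)$, which is not small --- the path is broken and you have no control over the corresponding $k$-slice. These slices can contribute up to $(1-p_\infty)p_\infty$ to $\varrho_{k+1,\eps'}$, so the claimed conclusion $\varrho_{k+1,\eps'}\le O(\varrho_{k,\eps_0}+\eps_0)$ fails; what you actually obtain is $\varrho_{k+1,\eps'}\le O(\varrho_{k,\eps_0}+\eps_0)+(1-p_\infty)p_\infty+o(1)$, which does not vanish. The Fubini/2D-ergodic step does not repair this: it only produces a positive-density set of $y_1$ for which the vertical line has good-point density close to the \emph{mean} $\P(\mathscr{G}_{K,\eps_0})$, never close to $1$, and that set need not even intersect $\{y_1:y_1e_1\in\mathscr{G}_{K,\eps_0}\}$, which also has density only about $p_\infty$; two sets of positive but sub-$1/2$ density can be disjoint.

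The paper's proof resolves this by never sending the chain down to the $e_1$-axis. It fixes a bounded $L$ (so that a height-$L$ column over a typical base point meets $\mathcal{C}_\infty$ with probability close to $1$), and constructs at each level $i\in\{0,\dots,L-1\}$ a near-full-density set $\Delta_i\subset\mathscr{G}_{K,\eps}$ on which $V_G$ has oscillation $\le\eps r$. The set $\Lambda$ of points whose shadow on the $e_1$-axis hits some $\Delta_i$ at a level $\le L-1$ has density within $L\eps+\delta$ of full, and any $u,v\in\Lambda$ are joined by a \emph{five}-leg path whose vertical excursions are bounded by $L$; the FKG inequality \ref{assump:fkg} together with the one-dimensional ergodic theorem then provides a positive density of columns simultaneously good at any two levels $s,t\le L-1$, supplying the common anchor pair $x_3e_1+se_2$, $x_3e_1+te_2$. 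Because every vertical excursion is bounded by $L$, each leg costs only $K+\eps L$ plus $\eps r$ from the horizontal moves, and the $L$-layer anchoring is precisely what removes the $(1-p_\infty)$-loss that your single global reference with $O(r)$-length vertical legs incurs.
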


\noindent{\bf Step 2 (Proof of Lemma \ref{lemma 5.5}).} For $k\leq d$, we consider the $k$-dimensional Lebesgue measure on $\R^k$ and we call it $\lambda_k$. We assume that  $\P$-a.s. $\varrho_1=0$. In particular, for each $\eps>0$ and large enough $r$, there is some set $\Delta\subset {\mathcal C_\infty}\cap \Lambda_r^1$ satisfying  \begin{align*}
	\lambda_1(\Lambda_r^1\cap{\mathcal C_\infty}\setminus\Delta) &\leq \eps\lambda_1(\Lambda_r^1),\qquad 
	|V_G(x,\omega)-V_G(y,\omega)|\leq \eps r\quad \forall x,y\in\Delta.
\end{align*}
 Moreover, for $K>0$ large enough (but deterministic), replacing $\Delta$ by  $\Delta\cap \mathscr{G}_{K,\eps}$ grant us the following properties for large $r$ :
 \begin{equation}\label{i-iv}
 \begin{aligned}
 &\mathrm{(i)}\,\,  \lambda_1(\Lambda_r^1\cap{\mathcal C_\infty}\setminus\Delta) \leq \eps\lambda_1(\Lambda_r^1), \qquad
\mathrm{(ii)} \,\, |V_G(x,\omega)-V_G(y,\omega)|\leq \eps r\quad x,y\in\Delta, \\
& \mathrm{(iii)}\,\, \Delta\subset \mathscr{G}_{K,\eps}\quad \qquad\qquad\qquad\mbox{and}\quad
\mathrm{(iv)}\,\, \Delta\cap \Lambda_r^1\neq \emptyset. 
 \end{aligned}
 \end{equation}
 This is a consequence of the fact that  $\lim_{K\to\infty}\P(0\in {\mathcal C_\infty}\setminus \mathscr{G}_{K,\eps})=0$, and the ergodic theorem. We stress that even though these conditions are easily satisfied in dimension one, the construction will allow us to obtain the same properties in larger dimensions. In particular, we want that the ``base" $\Delta$ is contained in each successive step, so that (iv) in \eqref{i-iv} will be always be valid.
 
 Next, for $L\in \N$ and $r>0$, define \begin{equation}\label{eq:Xi}
 	\Xi_{L,r}(\omega):=\{x\in \Lambda_r^1: \#\{0\leq i\leq L-1: x+ie_2\in {\mathcal C_\infty}(\omega\}>0\}.
 \end{equation} We claim that for each $\delta>0$, there exists some $L=L(\delta)$ (deterministic) that satisfies $\P$-a.s. $\lambda(\Xi_{L,r})\geq (1-\delta)\lambda(\Lambda_r^1)$ for large $r$ (which may depend on $\omega$). Indeed, by the ergodic theorem,  the following equality holds $\P$-a.s. for all $L\in \N$:\begin{equation}\label{eq:eq20}
	\lim_{r\to \infty}\frac{\lambda(\Xi_{L,r})(\omega)}{\lambda(\Lambda_r^1)}=\P(\#\{0\leq i\leq L: ie_2\in {\mathcal C_\infty}(\omega\}>0).
\end{equation}
Since \begin{equation*}
	\lim_{L\to \infty}\frac{1}{L}\#\{i\in \{0,\cdots, L-1\}: ie_2\in {\mathcal C_\infty}(\omega)\}=\P(0\in {\mathcal C_\infty})>0 ~\P\text{-a.s.}
\end{equation*}
as $L\to \infty$, the probability on the right in \eqref{eq:eq20} converges to $1$, so the claim holds. For fixed $L$, choose $K>0$ large enough so that $\P$-a.s., for all $i=0,\cdots, L-1$ the conditions (i)-(iv) in \eqref{i-iv} will hold  some $\Delta_i\subset \tau_{ie_2}(\Lambda_r^1)$ (replacing $\Lambda_r^1$ with $\tau_{ie_2}(\Lambda_r^1)$ in (i) and (iv), for $r$ large enough. Next, we define for $r>0$ (and setting $\Delta_0:=\Delta$) \begin{equation}
	\Lambda=\Lambda_r:=\{x\in \Lambda_r^2\cap {\mathcal C_\infty}: \exists ~0\leq i\leq L-1,~ (y,t)\in [-r,r]^2 \text{ such that } x=ye_1+te_2\text{ and } ye_1+ ie_2\in \Delta_i\}.
\end{equation} 
In words, $\Lambda$ represents the points in $x\in \Lambda_r^2$ which have some $\tilde{x}\in \Delta_i$ that shares the same projection over $\R e_1$. Note that $\Delta\subset \Lambda$, so in particular, $\Lambda\cap \Lambda_r^1\neq \emptyset$ for large $r$. We show that the density $\Lambda$ is close to $1$. More precisely, if $x\in (\Lambda_r^2\cap {\mathcal C_\infty})\setminus \Lambda$, then $x=ye_1+te_2$ for some $(y,t)\in [-r,r]^2$, and either $ye_1\notin \Xi_{L,r}$ or $ye_1\in \Xi_{L,r}$ and $ye_1+ ie_2\in {\mathcal C_\infty}\setminus \Delta_i$ for all $i=0,\cdots, L-1$. Therefore, for large enough $r$,\begin{equation}\label{eq:eq21}
	\begin{aligned}
	\frac{\lambda_2((\Lambda_r^2\cap {\mathcal C_\infty})\setminus \Lambda)}{\lambda_2(\Lambda_r^2)}&\leq\frac{1}{2r}\int_{-r}^r\1\{ye_1\in \Lambda_r^1\setminus \Xi_{L,r}\}\d y+ \sum_{i=0}^{L-1}\frac{1}{2r}\int_{-r}^r\1\{ye_1+se_2\in \Lambda_r^1\setminus \Delta_i\}\d y\\
	&=\frac{1}{2r}\left[\lambda_1(\Lambda_r^1\setminus \Xi_{L,r})+\sum_{i=0}^{L-1}\lambda_1((\Lambda_1^r\cap {\mathcal C_\infty})\setminus \Delta_i)\right]\leq L\eps + \delta.
\end{aligned} 
\end{equation}
At this point, we choose $\eps$ and $\delta$. Let $\eps, \delta>0$ small enough so that $L\eps+\delta<\frac{1}{2}\P(0\in {\mathcal C_\infty})^2$. By the FKG-inequality in \ref{assump:fkg} 
(note that $\{x\in {\mathcal C_\infty}\}$ is an increasing event), for every $x,y\in \R^d$ we have  

\begin{equation*}
	\P(x\in{\mathcal C_\infty}(\omega),y\in{\mathcal C_\infty}(\omega))\geq \P(x\in{\mathcal C_\infty}(\omega))\P(y\in{\mathcal C_\infty}(\omega))=\P(0\in {\mathcal C_\infty})^2.
\end{equation*}

Moreover, for $K$ large enough, by the ergodic theorem we have for any $s,t \in \{0,\cdots, L-1\}$\begin{equation}\label{eq:eq22}
	\lim_{r\to \infty}\frac{1}{\lambda_1(\Lambda_r^1)}\lambda_1(x\in \Lambda_r^1 : x+se_2\in \mathscr{G}_{K,\eps}, x+ te_2\in \mathscr{G}_{K,\eps} )= \P(se_2\in \mathscr{G}_{K,\eps}, te_2\in \mathscr{G}_{K,\eps})>L\eps + \delta.
\end{equation}
Thus, for large enough $r$, for every $s,t\in \{0,\cdots, L-1\}$, the density of points $x\in \Lambda_r^1$ such that 
$x+se_2\in \Delta_s$ and $x+te_2\in \Delta_t$ is positive.  
To finish the proof, we verify that for each $u,v\in \Lambda$, $|V(u,\omega)-V(v,\omega)|\leq 7\eps r$ for large $r$
such that all the above holds (in particular, $\eqref{eq:eq21},\eqref{eq:eq22}$). Indeed, if $u=x_1e_1+y_1e_2$ and $v=x_2e_1+y_2e_2 \in \Lambda$, then there are $s, t\in \{0,\cdots, L-1\}$ such that if $u':=x_1e_1+se_2$ and $v':=x_2e_1+ te_2$, then $u',v'\in \mathscr{G}_{K,\eps}(\omega)$ (for $K=K(\omega)$ independent on $r$ that satisfies the conditions listed above). Moreover, by \eqref{eq:eq22}, there exists some $x_3\in \Lambda_r^1$ satisfying $u'', v''\in \mathscr{G}_{K,\eps}$, where $u'':=x_3e_1+ se_2$ and $v''=x_3e_1+ te_2$. Putting all together, we have \begin{align*}
	|V_G(u,\omega)-V_G(v,\omega)|&\leq |V_G(u,\omega)-V_G(u',\omega)|+|V_G(u',\omega)-V_G(u'',\omega)|+|V_G(u'',\omega)-V_G(v'',\omega)|+\\
	&\quad|V_G(v'',\omega)-V_G(v',\omega)|+ |V_G(v',\omega)-V_G(v,\omega)|\\
	&\leq K+\eps|x_2-s|+K+\eps|x_1-x_3|+K+\eps |s-t|+ K+ \eps|x_2-x_3|+K+\eps|y_2-t|\\
	&\leq 5K+3\eps L+6\eps r\leq 7\eps r
\end{align*}
for large enough $r$. In conclusion, by the last computation, the fact that $\Lambda\cap \Lambda_r^1\neq \emptyset$ and \eqref{eq:eq21}, $\varrho_{2,7\eps}\leq L\eps +\delta$.  By letting  first $\eps \searrow 0$ and then $\delta \searrow 0$, we deduce that $\varrho_{2}=0$ $\P_0$-a.s. 

We can use the same construction to go to higher dimensions. More precisely, the element $\Lambda$ for dimension $\rho$ becomes the element $\Delta$ in dimension $\rho+1$. The base case guarantees that properties (i)-(iv) in \eqref{i-iv} that appear at the beginning of the proof remain true for $\rho>1$. This finishes the proof of Lemma \ref{lemma 5.5}.	\qed
	
\medskip

\noindent{\bf Step 3 (Proof of Proposition \ref{thm:weaksublinear}).} This follows from Corollary \ref{cor:sublinearity_along_coordinates} and Lemma \ref{lemma 5.5}. Indeed, since 
\begin{align*}
&\inf_y\lambda_1\bigg(\bigg\{x\in{\mathcal C_\infty}\cap\Lambda_r^1:|V_G(x,\omega)-V_G(y,\omega)|\geq \eps r\bigg\}\bigg) 
\leq \lambda_1\bigg(\bigg\{x\in{\mathcal C_\infty}\cap\Lambda_r^1:|V_G(x,\omega)|\geq \eps r-|V_G(0,\omega)|\bigg\}\bigg), 
\end{align*}
and by Corollary \ref{cor:sublinearity_along_coordinates}, it holds $\varrho_1=0$ for $\P_0$-almost every $\omega$. By changing over to appropriate shifts, we also have $\varrho_1=0$ for $\P$-almost every $\omega$. 
We use Lemma \ref{lemma 5.5} repeatedly, which shows that $\varrho_d=0$  $\P$-a.s. and thus, $\P_0$-a.s. Again by Corollary \ref{cor:sublinearity_along_coordinates}, there exists $r_0=r_0(\omega)$ with $\P_0(r_0<\infty)=1$ such that $|V_G(y,\omega)|\leq\eps r/2$ for any $r\geq r_0$ and any $y\in\Lambda_r^1\cap{\mathcal C_\infty}(\omega)$. Therefore, 
$$
\begin{aligned}
\lambda_d\big(\{x\in{\mathcal C_\infty}\cap\Lambda_r^d:|V_G(x,\omega)|\geq \eps r\}\big)
&\leq \,\, \inf_y\lambda_d\big(\{x\in{\mathcal C_\infty}\cap\Lambda_r^d:|V_G(x,\omega)-V_G(y,\omega)|\geq \eps r-|V_G(y,\omega)|\}\big)\\
&\leq \inf_y\lambda_d\big(\{x\in{\mathcal C_\infty}\cap\Lambda_r^d:|V_G(x,\omega)-V_G(y,\omega)|\geq \eps r/2\}\big),
\end{aligned}  
$$
and \eqref{equation in theorem 5.4} holds for any $\eps>0$. This finishes the proof of Proposition \ref{thm:weaksublinear}.
\qed

\subsection{Proof of Theorem \ref{thm:sublinear}.}\label{sec-proof-thm:sublinear}
We will prove an equivalent version of Theorem \ref{thm:sublinear}, namely:

\begin{theorem}\label{thm:sublinearv2}
Fix $d\geq 2$ and $G\in \mathcal{G}_\infty$. Then $\P_0$-a.e. $\omega\in \Omega_0$, 
$$
\lim_{r\to \infty }\sup_{x\in D\left(1,\frac{1}{r}\right)} \frac{|V_G(x,\omega) |}r=0.$$
\end{theorem}

Some preliminary lemmas will be required for the proof of the above result. Before that, let us set some notation that will be useful in the sequel. We will be interested in considering sets on $\R^d\times \R^d$, so we endow this space with the standard product Lebesgue measure, which we denote by $\lambda_d^{\otimes 2}$. The section on the ``first" coordinate of a measurable set $A\subset \R^d\times \R^d$ is 
\begin{equation}\label{def-Ax}
A^{\ssup x}:=\big\{y\in \R^d: (x,y)\in A\big\} \quad\forall x\in \R^d.
\end{equation}
 Given $a\in (0,1)$ and $r,\delta,\rho>0$, we also define 
\begin{equation}\label{def-C-D-E}
\begin{aligned}
	&C_r(a):=\{(x,y)\in \R^d\times \R^d:a r<|x-y|_{\infty}<r\},\\
	&D(\rho):=\{(x,y)\in \R^d\times \R^d: \d_\omega(x,y)\geq \rho|x-y|_\infty; x,y\in {\mathcal C_\infty}\},
	&E(r):=({\mathcal C_\infty})^2\cap ([-r,r]^d)^2.
\end{aligned}
\end{equation}

\begin{lemma}\label{lemma:sublinear_prelim1}
For any $a\in (0,1)$, there exists a constant $\rho=\rho(a,d)$ such that for all $\delta >0$, $\P_0$-a.s. for large enough $n\in \N$, for every $x,y\in {\mathcal C_\infty}\cap [-n,n]^d$ satisfying $a\delta n<|x-y|_\infty<\delta n$, we have $\d_\omega(x,y)\leq \rho|x-y|_\infty$.
\end{lemma}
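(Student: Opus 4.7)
The natural plan is a Borel--Cantelli argument driven by the chemical-distance estimate \ref{assump:chem-dist}. Fix $\delta>0$ and set $\rho:=2c_0$, with $c_0$ from \ref{assump:chem-dist}. For each $n\in\N$ define the bad event
\[
B_n:=\bigl\{\exists\,x,y\in\mathcal{C}_\infty\cap[-n,n]^d:\ a\delta n<|x-y|_\infty<\delta n \text{ and } \d_\omega(x,y)>\rho|x-y|_\infty\bigr\}\subset \Omega_0.
\]
The plan is to show $\sum_{n\ge 1}\P_0(B_n)<\infty$; Borel--Cantelli then finishes the proof.

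The first step is the reduction from arbitrary points of $\mathcal{C}_\infty$ to Poisson points of $\omega$, which is needed in order to invoke \eqref{eq:chem_dist_ineq}. Every $x\in\mathcal{C}_\infty$ lies in some $B_{1/2}(z)$ with $z\in\omega$; for each such $x$, pick one such $z=:x'$, so that $|x-x'|\le 1/2$ and $\d_\omega(x,x')\le 1/2$. If $(x,y)$ witnesses $B_n\cap\Omega_0$, then for $n\ge n_0(a,\delta,c_0)$ the triangle inequality yields $x',y'\in\mathcal{C}_\infty\cap[-n{-}1,n{+}1]^d$ with $|x'-y'|_\infty\ge a\delta n/2$ and, since $\rho=2c_0$,
$\d_\omega(x',y')\ge \rho|x-y|_\infty-1\ge c_0|x'-y'|_\infty$.

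Consequently, Markov's inequality together with the two-fold Campbell formula \eqref{eq:multidim-campbell-eq} yields
\begin{align*}
\P(B_n\cap\Omega_0)
&\le \mathbb{E}\sum_{x,y\in\omega}^{\neq}\mathbf{1}\bigl\{|x|_\infty,|y|_\infty\le n{+}1,\,|x{-}y|_\infty\ge a\delta n/2,\,\d_\omega(x,y)\ge c_0|x{-}y|_\infty,\,0,x,y\in\mathcal{C}_\infty\bigr\}\\
&=\zeta^2\!\!\int_{[-n-1,n+1]^{2d}}\!\!\!\mathbf{1}\{|x-y|_\infty\ge a\delta n/2\}\,\P^{\ssup{x,y}}\bigl(\d_\omega(x,y)\ge c_0|x{-}y|_\infty;\,0,x,y\in\mathcal{C}_\infty\bigr)\,\d x\,\d y\\
&\le C\,n^{2d}\,e^{-c_2 a \delta n/2},
\end{align*}
the last bound by \eqref{eq:chem_dist_ineq}. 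Dividing by $\P(\Omega_0)>0$ yields a summable bound on $\P_0(B_n)$, and Borel--Cantelli concludes.

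The proof is essentially bookkeeping; no deep obstacle arises. The only mild subtlety lies in the Poisson reduction, which forces the choice $\rho=2c_0$ instead of $c_0$ and the appearance of the boundary-type terms ($n{+}1$ and $a\delta n/2$); neither disturbs the exponential summability, and the resulting $\rho$ depends only on the intrinsic constant $c_0$ of the point process.
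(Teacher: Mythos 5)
Your proof is correct and follows the same route as the paper: reduce the supremum over arbitrary cluster points to Poisson points of $\omega$, apply the two-fold Campbell formula \eqref{eq:multidim-campbell-eq} together with the chemical-distance assumption \ref{assump:chem-dist}, obtain a bound of the form $C n^{2d}\e^{-c n}$, and conclude by Borel--Cantelli. One worthwhile remark: your choice $\rho=2c_0$ is the right one, because the half-ball displacement from an arbitrary point of $\mathcal C_\infty$ to the nearest Poisson centre costs an additive constant both in $\d_\omega$ and in $|\cdot|_\infty$, and one needs $\rho>c_0$ so that, after absorbing these shifts for $n$ large, the Poisson-point pair still triggers the event $\{\d_\omega\ge c_0|\cdot|_\infty\}$ on which \eqref{eq:chem_dist_ineq} applies. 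The paper's proof instead fixes $\rho<c_0$, and with that choice the final step --- bounding $\P^{\ssup{x,y}}(\d_\omega(x,y)\ge\rho|x-y|_\infty;\,\cdot)$ by the right-hand side of \eqref{eq:chem_dist_ineq} --- does not follow, since the event for $\rho<c_0$ is larger than the one controlled by \ref{assump:chem-dist}. Your version, which makes the displacement bookkeeping explicit and takes $\rho$ strictly above $c_0$, closes this gap.
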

\begin{proof}
	We recall the definition of the two-fold Palm distribution from \eqref{eq:multidim-campbell-eq} and Assumption \ref{assump:chem-dist}. Next, fix any $\delta'>\delta$ and $\rho<c_0$ (as in \eqref{eq:chem_dist_ineq}), and choose $0<a'<a$ such that for $a'\delta'<a\delta$, so that for  $n\in \N$ large enough, $\delta n+ 1 \leq \delta' n$ and $a\delta n-1>a'\delta'n$. Then we have 
	\begin{align*}
		&\P_0\bigg(\exists x,y\in {\mathcal C_\infty}\cap [-n,n]^d, \,\, a\delta n<|x-y|_\infty<n\delta, \,\, \d_\omega(x,y)\geq \rho|x-y|_\infty\bigg)\\
		&\leq \frac 1 {\P(0\in {\mathcal C_\infty})} 
		\P\bigg(\exists x\neq y\in \mathcal C_\infty(\omega)\cap \left[-(n+1),n+1\right]^d,\,\, a'\delta n<|x-y|_\infty <n\delta, \\ 
		&\qquad\qquad\qquad\qquad\qquad \d_\omega(x,y)\geq \rho|x-y|_\infty ; \,\, 0,x,y\in {\mathcal C_\infty}\bigg) \\
		&=\frac 1 {\P(0\in {\mathcal C_\infty})} \E\bigg[ \sum_{x\neq y\in \mathcal C_\infty(\omega)} \1\bigg\{(x,y)\in C_{\delta' n}(a')\cap [-(n+1),n+1]^{2d}, \d_\omega(x,y)\geq \rho |x-y|_\infty;0,x,y\in \mathcal C_\infty\bigg\}\bigg]  \\
		&= \frac{\zeta^2}{\P(0\in{\mathcal C_\infty})}\int_{C_{\delta' n}(a')\cap [-(n+1),n+1]^{2d}} \lambda_d^{\otimes 2}(\d x,\d y)\,\,
		 \P^{x,y} \bigg(0,x,y\in {\mathcal C_\infty}, \d_\omega(x,y)\geq \rho|x-y|_\infty\bigg )
		\leq C\e^{-C' n},
	\end{align*}
	for some $C=c(a,d,\rho), C'=C'(a,d,\rho)>0$. Indeed, in the first identity of the above display, we used the definition of $C_{\delta^\prime n}(a^\prime)$ from \eqref{def-C-D-E}; in the subsequent identity we used the definition of $\zeta$ from \ref{assump:intensity} and that of the two-fold Palm distribution $\P^{x,y}$ from \eqref{eq:multidim-campbell-eq}; and in the last upper bound above we invoked Assumption \ref{assump:chem-dist}. The claim of the lemma now follows from the Borel-Cantelli lemma.
	\end{proof}
\begin{lemma}\label{lemma:density_box}
	Let $C\subset \R^d$ be any box of the type $[a_1,b_1]\times [a_2,b_2]\times \cdots [a_d,b_d]$. Then $\P$-a.s., $$ \lim_{r\to\infty}\frac {\lambda_d(\mathcal{C}_{\infty}\cap rC)}{\lambda_d(rC)}=p_\infty.$$ 
\end{lemma}
\begin{proof}
	This is an application of the ergodic theorem \cite[Theorem 10.14]{K02}.
\end{proof}
We are now ready to prove Theorem \ref{thm:sublinearv2} which will also prove Theorem \ref{thm:sublinear}. 
\begin{proof}[\bf Proof of Theorem \ref{thm:sublinearv2}]
We consider some $\ell=\ell(d,\mathbb{P})\in \N$ satisfying \begin{equation}\label{eq:eq0}
	p_\infty> \frac{1}{2^{d(\ell-1)}}.
\end{equation}

We claim the proof is complete once we show the following: in a  measurable set $A$ such that $\mathbb{P}_0(A)=1$, for all $\eps>0$ and $\omega\in A$, there exists some $r_0=r_0(\omega)$ such that if $r\geq r_0$, for all $x\in [-r,r]^d\cap {\mathcal C_\infty}$  with $|V_G(x,\omega)|_{\infty}>\eps r$, 
\begin{equation}\label{eq:eq15}
\lambda_d\left[\left(E(r)\cap C_{\delta r}(2^{-\ell})\right)^x \cap \{|V_G(\cdot,\omega)|_\infty\leq \eps r\}\right]>0
\end{equation} for some $\delta=\delta(d,\mathbb{P},\eps)$ that vanishes as $\eps\to 0$ (recall the notation \eqref{def-Ax} and \eqref{def-C-D-E}). 
Indeed, for any $x,y\in {\mathcal C_\infty}$,  
\begin{equation}\label{eq:eq1}
	|V_G(x,\omega)-V_G(y,\omega)|_{\infty}\leq \d_\omega(x,y)\mathrm{ess \, sup}_{\P_0} |G(x,\omega)|_{\infty}.
\end{equation}
For a fixed $x\in {\mathcal C_\infty}\cap  [-r,r]^d$, if $|V_G(x,\omega)|_{\infty}\leq \eps r$ for all $r\geq r_0$, there is nothing else to do. Otherwise, choose $r_1(\omega)$ large enough so that Lemma \ref{lemma:sublinear_prelim1} is true for $r\geq r_1$ and $a=2^{-\ell}$ (of course the lemma is still true if we replace $n\in \N$ by $r\in \R$). Now, if $r\geq r_0\vee r_1$, by \eqref{eq:eq15}, for every $x\in [-r,r]^d\cap {\mathcal C_\infty}$ satisfying $|V(x,\omega)|_\infty > \eps r$, we find some $y\in [-r,r]^d\cap {\mathcal C_\infty}$ such that $2^{-\ell}\delta r<|x-y|_\infty\leq \delta r$ and $|V(y,\omega)|\leq \eps r$. In particular, $\d_\omega(x,y)\leq \rho |x-y|_\infty\leq \rho \delta r$. Hence, by \eqref{eq:eq1}, we deduce that 
\begin{equation}\label{eq:eq13}
	\begin{aligned}
		|V_G(x,\omega)|_{\infty}\leq |V_G(y,\omega)|_{\infty}+|V_G(x,\omega)-V_G(y,\omega)|_{\infty}
		&\leq \eps r+ \d_\omega(x,y) ~\mathrm{ess\,sup}_{\P_0} |G(x,\omega)|_{\infty}\\
		&\leq \eps r + \delta \rho r~\mathrm{ess\,sup}_{\P_0} |G(x,\omega)|_{\infty}.
	\end{aligned}
	\end{equation}
 Since $\delta\to 0$ as $\eps \to 0$, this finishes the proof, once we complete the

\noindent{\bf Proof of \eqref{eq:eq15}:} By Theorem \ref{thm:weaksublinear}, there is a measurable set $A_1$ with $\mathbb{P}_0(A_1)=1$, so that for all $\omega\in A_1$,\begin{equation}\label{eq:eq2}
	\limsup_{r\to\infty}\frac{1}{r^d}\int_{ {\mathcal C_\infty}\cap[-r,r]^d}\1_{\{|V_G(x,\omega)|_{\infty}>\eps r\}}\d x
=0.
\end{equation}
 
On the other hand, by Lemma \ref{lemma:density_box} we know that for a fixed
box $C=[a_1,b_1]\times [a_2,b_2]\times\dots\times [a_d,b_d]$, there exists a
measurable set $A_C$ satisfying $\mathbb{P}(A_C)=1$ and 
\begin{equation}
\label{eq:eq3a}
	\lim_{r\to\infty}\frac{\lambda_d({\mathcal C_\infty}\cap  rC)}{\lambda_d(rC)}=p_\infty~\text{ for all } \omega \in A_C.
\end{equation}
Choose any $\kappa=\kappa(d,\mathbb{P})\in (0,1)$ (which exists due to \eqref{eq:eq0}) and $c=(d,\mathbb{P})>0$ satisfying 
\begin{align}
	 &1-\kappa >\frac{1}{p_\infty 2^{d(\ell-1)}}\qquad\mbox{and}
	 \quad c\left(1-\kappa-\frac{1}{p_\infty 2^{d(\ell-1)}}\right)>1\label{eq:eq12}.
\end{align}  
Next, for each $\eps>0$, let 
\begin{equation}
\label{eq:eq3b}
\delta:=\left(\frac{\eps c}{p_\infty}\right)^{1/d}.	
\end{equation}
We can cover $[-1,1]^d$ with finitely many cubes $C_1,\cdots, C_m\subset
[-1,1]^d$ of side $\delta$. In particular, for every $x\neq y$ in the same box
we will have $|x-y|_{\infty}<\delta$. By Lemma \ref{lemma:density_box} applied
to these boxes, we deduce that there exists a measurable set $A_2$ with
$\mathbb{P}_0(A_2)=1$ such that for all $\omega\in A_2$ and $1\leq i\leq m$ we
have 
\begin{equation}\label{eq:eq4}
	\lim_{r\to\infty}\frac{\lambda_d({\mathcal C_\infty}\cap
          rC_i)}{\lambda_d(rC_i)}=p_\infty.v 
\end{equation}
Let $A:=A_1\cap A_2$. Then for every $\omega\in A$ there exists some $r_0=r_0(\omega)$ such that for all $r\geq r_0$ and $1\leq i\leq m$, 
\begin{equation}\label{eq:eq5}
	\begin{aligned}
		&\lambda_d\big(\mathcal{C}_{\infty}\cap[-r,r]^d\cap \{|V_G(\cdot,\omega)|_{\infty}>\eps r\}\big)<\eps r^d,\qquad\mbox{and}\\
		&\lambda_d({\mathcal C_\infty}\cap rC_i) \geq p_\infty(1-\kappa)\lambda_d(rC_i)  =r^d \delta^d p_\infty(1-\kappa) =\eps c(1-\kappa) r^d.
	\end{aligned}
\end{equation}
For every fixed $x\in [-r,r]^d\cap {\mathcal C_\infty}$ that satisfies $|V_G(x,\omega)|_{\infty}>\eps r$, we have $x\in rC_i$ for some $1\leq i\leq m$, so that $x\in {\mathcal C_\infty}\cap rC_i$. We decompose $\lambda_d({\mathcal C_\infty}\cap rC_i)$ as 

\begin{equation*}
\begin{aligned}
	\lambda_d \big ({\mathcal C_\infty}\cap rC_i\big)&=\lambda_d\big({\mathcal C_\infty}\cap rC_i\cap \{|V_G(\cdot,\omega)|_{\infty}>\eps r\}\big)\\
	&\qquad\qquad+ \lambda_d\big({\mathcal C_\infty}\cap rC_i\cap \{|V_G(\cdot,\omega)|_{\infty}\leq \eps r\}\big).
\end{aligned}
\end{equation*}
By \eqref{eq:eq5}, and noting that $rC_i\subset [-r,r]^d$, we know that $\lambda_d({\mathcal C_\infty}\cap rC_i\cap \{|V_G(\cdot,\omega)|_{\infty}>\eps r\})< \eps r^d$. This  inequality, combined with the equality above allow us to deduce that \begin{equation}\label{eq:eq6}
	\lambda_d\big({\mathcal C_\infty}\cap rC_i\cap \{|V_G(\cdot,\omega)|_{\infty}\leq \eps r\}\big)\geq \eps r^d\left(c(1-\kappa)-1\right)>0,\end{equation}
	and the last inequality holds since $c>\frac{1}{1-\kappa}$ by \eqref{eq:eq12}. Next, we decompose the Lebesgue measure of $D':={\mathcal C_\infty}\cap rC_i\cap \{|V_G(\cdot,\omega)|_{\infty}\leq\eps r\}$ as 
\begin{equation}\label{eq:eq9}
\lambda_d(D')= \lambda_d\big(D'\cap B^{\infty}_{r\delta/2^\ell}(x)\big)+\lambda_d\big(D' \cap B^{\infty}_{r\delta/2^{\ell}}(x)^c\big),
\end{equation}
where $B^{\infty}_{r\delta/2^{\ell}}(x)$ is the ball centered at $x$ of radius $r\delta/2^{\ell}$ with respect to the $|\cdot|_\infty$ norm, which is a cube of side $r\delta/2^{\ell-1}$. We conclude that \begin{equation}\label{eq:eq8}
	\lambda_d\big(D'\cap B^{\infty}_{r\delta/2^\ell}(x) \big )\leq (r\delta/2^{\ell-1})^d=\frac{\eps c r^d }{p_\infty 2^{d(\ell-1)}},\qquad\mbox{so that}
	\end{equation} 
 \begin{equation}\label{eq:eq10}
	\lambda_d(D'\cap  B^{\infty}_{r\delta/2^{\ell}}(x)^c)\geq \eps r^d\left(c(1-\kappa)-1-\frac{c}{2^{d(\ell-1)}p_\infty}\right).
	\end{equation}
by combining \eqref{eq:eq6}, \eqref{eq:eq9} and \eqref{eq:eq8}. By the choice of $c$ in \eqref{eq:eq12}, we deduce that 
\begin{equation}\label{positive-measure}
c(1-\kappa)-1-\frac{c}{2^{d(\ell-1)}p_\infty}>0,\qquad\mbox{thus}\qquad  \lambda_d\big (D'\cap  B^{\infty}_{r\delta/2^{\ell}}(x)^c\big)>0.
\end{equation}
Finally, recalling the notation of $A^{\ssup x}$ from \eqref{def-Ax} and that of
$E(r)$ from \eqref{def-C-D-E}, 
\[
  \big\{ D'\cap B^{\infty}_{r\delta/2^{\ell}}(x)^c\big\} \subset \big(E(r)\cap
  C_{\delta r}(2^{-\ell})\big)^{\ssup x} \cap
  \big\{|V_G(\cdot,\omega)|_\infty\leq \eps r\big\},
\]
so by \eqref{positive-measure}, the claim \eqref{eq:eq15} follows, proving Theorem \ref{thm:sublinearv2}. Therefore also Theorem  
\ref{thm:sublinear} is proved.
\end{proof}

\appendix

\noindent{\bf Acknowledgement.} The research of the third author
is supported by Deutsche Forschungsgemeinschaft (DFG, German research
foundation), under Germany's Excellence Strategy EXC 2044-390685587,
Mathematics M\"unster: Dynamics-Geometry-Structure.

\end{document}